\newtheorem{theorem}{Theorem}[section]
\newtheorem{proposition}[theorem]{Proposition}
\newtheorem{lemma}[theorem]{Lemma}
\newtheorem{corollary}[theorem]{Corollary}
\theoremstyle{definition}
\newtheorem{remark}[theorem]{Remark}
\newtheorem{definition}[theorem]{Definition}
\newtheorem{problem}[theorem]{Problem}
\newcommand{\HP}{\hat P}
\newcommand{\Pt}{P_\tau}
\def\P{{\mathcal P}}
\newcommand{\K}{{\mathcal K}}
\newcommand{\I}{{\mathcal I}}
\newcommand{\V}{{\mathcal V}}
\newcommand{\W}{{\mathcal W}}
\newcommand{\IN}{{\Bbb N}}
\newcommand{\IR}{{\Bbb R}}
\newcommand{\Tych}{{\mathcal T}ych}
\newcommand{\Unif}{{\mathcal U}nif}
\newcommand{\supp}{\operatorname{supp}}
\newcommand{\cl}{\operatorname{cl}}
\newcommand{\conv}{\operatorname{conv}}
\newcommand{\id}{\operatorname{id}}
\newcommand{\pr}{\operatorname{pr}}
\newcommand{\diam}{\operatorname{diam}}
\newcommand{\e}{\varepsilon}
\newcommand{\bs}{\setminus}
\newcommand{\Metr}{{\mathcal M}etr}
\newcommand{\invlim}{\varprojlim}
\newcommand{\w}{\omega}
\begin{document}

\title{Topology of probability measure spaces, II:\break
barycenters of probability Radon measures and metrization of the functors $\P_\tau$ and $\HP$.
}
\shorttitle{Topology of probability measure spaces, II}
\author{T.O. Banakh}
\address{Ivan Franko National University of Lviv}
\subjclass{18B30, 28A33, 28C15, 54B30, 54H05}
\UDC{515.12}

\msyear{1995}
\msvolume{5}
\msnumber{1-2}
\mspages{88--106}
\received{01.10.1994}
\revised{07.06.2012}
\pageno{88}

\ruabstract{Т.Банах}{Топология пространств вероятностных мер, II:
барицентры вероятностных радоновских мер и метризация функторов $\P_\tau$ и $\HP$}%
{}

\enabstract{T.Banakh}{Topology of probability measure spaces, II:
barycenters of probability Radon measures and metrization of the
functors $\Pt$ and $\HP$}%
{This paper is a follow-up to the author's work [1] devoted to
investigation of the functors $\Pt$ and $\HP$ of spaces of probability
$\tau$-smooth and Radon measures. In this part, we study the barycenter map for spaces of Radon probability measures. The obtained results are applied to show that the functor $\HP$ is monadic in the category of metrizable spaces.
Also we show that the functors $\HP$ and $\Pt$ admit liftings to the
category $\mathcal{BM}etr$ of bounded metric spaces and also to the category
$Unif$ of uniform spaces, and investigate properties of those liftings.
}%

\maketitle

{\let\thefootnote\empty
\footnotetext{This paper was translated from Russian to English by Oles' Potyatynyk} }

This paper is a follow-up to the author's article [1], where the functors $\Pt$ and $\HP$ of spaces of probability
$\tau$-smooth and Radon measures were defined. We continue with the same numbering of chapters and propositions as in [1].

In this part, for spaces of Radon probability measures, the barycenter map is studied, this map will be used to show that the functor $\HP$ is monadic in the category of metrizable spaces.

Another question addressed in this article concerns the metrizability of the functors
$\Pt$ and $\HP$. In this direction, we shall show that the functors $\Pt$ and $\HP$ can be lifted to the category $\mathcal{BM}etr$ of bounded metric spaces and also to the category $\Unif$ of uniform spaces, and investigate the properties of those liftings. We should note that similar problems concerning the metrizability of the functor
$P_\beta$ were explored in [2]--[6].

\setcounter{section}{2}
\section{Barycenters and monadicity of the functor $\HP$}

Let $E$ be a locally convex vector space. By $E^{*}$ we shall denote the vector space of all continuous linear functionals on $E$, equipped with the topology of uniform convergence on weakly bounded subsets of $E$. (Let us recall that a subset $A\subset E$ is {\em weakly bounded} if each functional $f\in E^*$ is bounded on $A$).
 A subset $\K\subset E^*$ is be called {\em equicontinuous} if there exists a neighborhood $U\subset E$ of zero such that  $|f(x)|<1$ for any $x\in U$ and $f\in\K$. By $E^{**}$ we denote the space of continuous linear functionals on $E^*$ endowed with the natural topology, i.e. the topology of uniform convergence on equicontinuous subsets of $E^*$. It is well-known [7, c.182] that the canonical map $E\to E^{**}$ is an embedding with respect to the natural topology on  $E^{**}$, so from now on, we will treat the space $E$ as a subspace of $E^{**}$.

Let $X$ be a weakly bounded subset of a locally convex space $E$. Then for any measure $\mu\in\HP(X)$, the formula
$b_X(\mu)(f)=\mu(f|X)$, $f\in E^*$, defines a linear functional
$b_X(\mu)$ on $E^*$, which is called the {\em barycenter} of the measure $\mu$. The functional $b_X(\mu)$ is continuous, since the set $X$ is weakly bounded and $E^*$ carries the topology of uniform convergence on weakly bounded sets. Thus, we have defined the barycenter map $b_X:\HP(X)\to E^{**}$.

A map $f:C\to C'$ between convex sets is called {\em affine} if
$$f(t\,x+(1-t)y)=t\,f(x)+(1-t)f(y)\mbox{ \ for any $x,y\in C$ and $t\in [0,1]$.}$$
The following lemma is immediate:

\begin{lemma}\label{l3.1} The barycenter map $b_X:\HP(X)\to E^{**}$ is affine and 
$b_X(\delta_x)=x$ for all $x\in X$ (where $\delta_x$ denotes the Dirac measure, concentrated at the point $x$).
\end{lemma}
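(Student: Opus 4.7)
The plan is to unpack both assertions directly from the definition of $b_X$ and from the linearity of the integral, since each claim is essentially a tautology once we spell out what the symbols mean. I would treat the two parts separately and in the order stated in the lemma.

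First I would handle the identity $b_X(\delta_x)=x$. By definition, for every $f\in E^*$ one has $b_X(\delta_x)(f)=\delta_x(f|X)$, and since $\delta_x$ is the Dirac measure at $x\in X$, this Dirac integral equals $(f|X)(x)=f(x)$. The point is then to recall the identification $E\hookrightarrow E^{**}$ used in the excerpt: under this canonical embedding, the element $x\in E$ corresponds to the evaluation functional $f\mapsto f(x)$ on $E^*$. Thus $b_X(\delta_x)$ and $x$ coincide as elements of $E^{**}$, which is exactly the claim.

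For affineness, I would fix $\mu,\nu\in\HP(X)$ and $t\in[0,1]$, and compute $b_X(t\mu+(1-t)\nu)(f)$ for an arbitrary $f\in E^*$. By the definition of the barycenter and the linearity of the integral in the measure argument, this equals
\[
(t\mu+(1-t)\nu)(f|X)=t\,\mu(f|X)+(1-t)\,\nu(f|X)=t\,b_X(\mu)(f)+(1-t)\,b_X(\nu)(f).
\]
Since this holds for every $f\in E^*$, we get the required equality $b_X(t\mu+(1-t)\nu)=t\,b_X(\mu)+(1-t)\,b_X(\nu)$ in $E^{**}$.

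There is no real obstacle here; the only mild subtleties are bookkeeping ones. One is to make sure that $t\mu+(1-t)\nu$ is again a Radon probability measure on $X$, so that $b_X$ is actually being applied to it — this is built into the convex structure on $\HP(X)$ used throughout the paper. The other is the identification $E\subset E^{**}$ for the $\delta_x$-computation, which is exactly the canonical embedding recalled just above the lemma. No topological argument is needed, since the lemma asserts only algebraic/affine properties of the map, not its continuity.
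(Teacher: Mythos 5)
Your proof is correct and is exactly the routine unpacking the paper has in mind — the paper states the lemma as ``immediate'' without giving details, and your computation via the definition $b_X(\mu)(f)=\mu(f|X)$, linearity of the integral in the measure, and the canonical embedding $E\hookrightarrow E^{**}$ is the standard verification. No issues.
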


\begin{theorem}\label{t3.2} The map $b_X:\HP(X)\to E^{**}$ is continuous if and only if the set $X$ is bounded.
\end{theorem}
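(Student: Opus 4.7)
For the $(\Rightarrow)$ implication I would argue by contrapositive. If $X$ is not bounded there is a continuous seminorm $p$ on $E$ with $\sup_{x\in X}p(x)=\infty$; pick $x_n\in X$ with $p(x_n)>n$, a fixed $x_0\in X$, and put $\mu_n=(1-\tfrac1n)\delta_{x_0}+\tfrac1n\delta_{x_n}$. For every bounded continuous $g$ on $X$ one has $\mu_n(g)\to g(x_0)$, so $\mu_n\to\delta_{x_0}$ in $\HP(X)$. On the other hand, the polar $\K=\{f\in E^*:|f|\le p\}$ is an equicontinuous subset of $E^*$, and Hahn--Banach supplies $f_n\in\K$ with $f_n(x_n)=p(x_n)$. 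Then $|b_X(\mu_n)(f_n)-b_X(\delta_{x_0})(f_n)|=\tfrac1n|f_n(x_n)-f_n(x_0)|\ge(p(x_n)-p(x_0))/n$, which does not tend to $0$, so $b_X(\mu_n)\not\to x_0=b_X(\delta_{x_0})$ in the natural topology of $E^{**}$, contradicting continuity of $b_X$ at $\delta_{x_0}$.

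For the nontrivial direction $(\Leftarrow)$ assume $X$ is bounded, fix $\mu_0\in\HP(X)$ and an equicontinuous $\K\subset E^*$; boundedness of $X$ yields $M:=\sup\{|f(x)|:f\in\K,\ x\in X\}<\infty$. Given $\e>0$ I would assemble three ingredients: (i) by the Radon property, a compact $K\subset X$ with $\mu_0(X\bs K)$ arbitrarily small; (ii) by equicontinuity of $\K$ on $E$, a balanced open $0$-neighbourhood $W\subset E$ with $|f(x)-f(y)|<\e$ for all $f\in\K$ and $x-y\in W$; (iii) by Arzel\`a--Ascoli applied to the uniformly bounded equicontinuous family $\K|K\subset C(K)$, finitely many $f_1,\dots,f_n\in\K$ whose restrictions form an $\e$-net for $\K|K$ in the sup norm. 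The triangle inequality $|f(x)-f_i(x)|\le|f(x)-f(y)|+|f(y)-f_i(y)|+|f_i(y)-f_i(x)|$, applied at $y\in K$ with $x-y\in W$, propagates this approximation to $U:=K+W$, an open neighbourhood of $K$ on which each $f\in\K$ satisfies $|f-f_i|<3\e$ for a suitable index $i=i(f)$.

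Since $X$ is Tychonoff, the compact set $K$ and the closed set $X\bs U$ are separated by a Urysohn function $\psi:X\to[0,1]$ with $\psi|K=1$ and $\psi|X\bs U=0$. Declare the weak neighbourhood $V$ of $\mu_0$ in $\HP(X)$ by the inequalities $|\mu(f_i|X)-\mu_0(f_i|X)|<\e$ for $i=1,\dots,n$ and $|\mu(\psi)-\mu_0(\psi)|<\e$; the last one translates into the uniform tail bound $\mu(X\bs U)\le 1-\mu(\psi)\le C\e$ for $\mu\in V$. Then for $f\in\K$ and $\mu\in V$ one splits $\int(f-f_i)\,d\mu$ into its parts on $U$ (bounded by $3\e$) and on $X\bs U$ (bounded by $2M\mu(X\bs U)$) and combines with the middle term $|\mu(f_i|X)-\mu_0(f_i|X)|<\e$ to conclude $\sup_{f\in\K}|b_X(\mu)(f)-b_X(\mu_0)(f)|\le C'\e$, which is precisely continuity of $b_X$ into $E^{**}$. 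The main obstacle is this last passage from a uniform approximation on the compact $K$ to one on the surrounding open set $U$: this is where global equicontinuity of $\K$ on $E$ and the Urysohn cut-off $\psi$ have to work in tandem, and where the hypothesis that $X$ is bounded (rather than only weakly bounded) enters essentially, through the uniform bound $M$.
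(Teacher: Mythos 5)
Your proof is correct in both directions, but the two halves relate to the paper differently. For necessity the paper does not argue by contrapositive: it fixes $x\in X$ and an open $U\ni 0$ in $E^{**}$, uses that $\HP(X)$ is a bounded convex subset of $C_b^*(X)$ to find $n$ with $\frac1n\HP(X)+\frac{n-1}{n}\delta_x$ contained in the preimage of $U+x$, and then reads off $X\subset nU+x$ directly from $b_X(\frac1n\delta_y+\frac{n-1}{n}\delta_x)=\frac1n y+\frac{n-1}{n}x$; your route instead picks an unbounded continuous seminorm $p$, the two-point measures $\mu_n=(1-\frac1n)\delta_{x_0}+\frac1n\delta_{x_n}$ converging to $\delta_{x_0}$, and Hahn--Banach functionals in the equicontinuous polar of $p$ to witness non-convergence of the barycenters. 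The paper's scaling argument is slicker and never has to name a seminorm or invoke Hahn--Banach, while yours is more concrete and isolates exactly which equicontinuous family detects the failure; both are sound (your $\frac1n g(x_n)\to 0$ step correctly uses that the topology of $\HP(X)$ is tested only against \emph{bounded} continuous functions). For sufficiency your argument is essentially the paper's: Radon compact $K$, equicontinuity giving a zero-neighbourhood, Ascoli on $K$, propagation of the finite net to the enlargement $(K+W)\cap X$, and a four-way split of the integrals; the only real difference is that you encode the tail condition $\mu(X\bs U)$ small via a Urysohn cut-off $\psi$ and the functional $\mu\mapsto\mu(\psi)$, where the paper instead quotes Lemma 1.19 of Part I to assert that $\{\mu:\mu(V)>1-\e\}$ is open for $V$ open -- two interchangeable devices for the same portmanteau-type fact.
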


\begin{proof} Let us first assume that the map $b_X:\HP(X)\to
E^{**}$ is continuous. Fix $x\in X$. In order to prove that the set $X$ is bounded, it is sufficient to show that for any open set
$U\ni 0$ in $E^{**}$ there exists $n\in\IN$ such that $X\subset n\cdot U+x$.
Since the map
 $b_X:\HP(X)\to E^{**}$ is continuous, the set
$b^{-1}_X(U+x)$ is open in $\HP(X)$. Therefore, there exists  a $*$-weakly open set $W$ in the space $C_b^*(X)\supset \HP(X)$ such that $W\cap
\HP(X)=b_X^{-1}(U+x)$. Since the subset $\HP(X)\subset C_b^*(X)$ is bounded, there exists a number $n\in\IN$ such that $\frac1n
\HP(X)+\frac{n-1}n\delta_x\in W$. Since the set $\HP(X)$ is convex, $\frac1n \HP(X)+\frac{n-1}n\delta_x\in W\cap \HP(X)=b_X^{-1}(U+x)$.
Then for every $y\in X$, $\frac1n\delta_y+\frac{n-1}n\delta_x\in
W\cap\HP(X)=b_X^{-1}(U+x)$. By Lemma 3.1,
$b_X(\frac1n\delta_y+\frac{n-1}n\delta_x)=\frac1n y+\frac{n-1}nx\in U+x$.
This implies that $\frac1ny\in U+\frac1n x$ and $y\in n\, U+x$.
Thus, $X\subset n\,U+x$, i.e. the set $X$ is bounded.

Now we will prove that if $X$ is a bounded subset of $E$, then the map $b_X:\HP(X)\to E^{**}$ is continuous.

Fix a measure $\mu_0\in\HP(X)$ and a neighborhood 
$$U(b_X(\mu_0))=\{F\in E^{**}: |(F-b_X(\mu_0))(f)|<1\mbox{ for all $f$ from some equicontinuous set $\K\subset E^*$}\}\subset
E^{**}$$of its barycenter. Since the set $X$ is bounded and $\K$ is an equicontinuous family in $E^*$, there exists a constant $C\ge 1$ such that
$|f(x)|<C$ for all $x\in X$ и $f\in\K$.

Now let us recall that the measure $\mu_0$ on $X$ is Radon. Therefore, there exists a compact $K\subset X$ such that $\mu_0(K)>1-\frac1{12\,C}$.
Since the family $\K$ is equcontinuous, there exists a neighborhood of zero $U\subset E$ such that $|f(x)|<\frac1{24}$ for all $f\in\K$
and $x\in U$. Let $V=(U+K)\cap X$.

By the Ascoli theorem [8, 3.4.20], the family of maps $\{f|K\mid
f\in\K\}\subset C(K)$ is precompact in the topology of uniform convergence. Therefore, there exists a finite subset $\I\subset\K$ such that for any functional $f\in\K$ there exists a functional $g\in\I$ with
$|f(x)-g(x)|<\frac1{12}$ for any $x\in K$. Moreover,
$|f(x)-g(x)|<\frac16$ for all $x\in V$.

By Lemma [1, 1.19], the set $\{\mu\in\HP(X)\mid \mu(V)>1-\frac1{12\,C}\}$
is an open neighborhood of the measure $\mu_0$. Then the set $$W=\{\mu\in \HP(X)\mid
\mu(V)>1-\frac1{12\,C}, \; |\mu(g|X)-\mu_0(g|X)|<\frac1{12},\; g\in\I\}$$
is an open neighborhood of the measure $\mu_0$ в $\HP(X)$. We claim that $b_X(W)\subset U(b_X(\mu_0))=\{F\in E^{**}:
|(F-b_X(\mu_0))(f)|<1,\; f\in\K\}$. Indeed, let $\mu\in W$. Then for any
$f\in\K$ there exists a functional $g\in\I$ such that
$|f(x)-g(x)|<\frac1{12}$ for all $x\in K$. Then
$$
\begin{aligned}
|(b_X(\mu)-b_X(\mu_0))(f)|&=|\mu(f|X)-\mu_0(f|X)|\le\\
&\le |\mu(f|X)-\mu(g|X)|+|\mu(g|X)-\mu_0(g|X)|+|\mu_0(g|X)-\mu_0(f|X)|\le\\
&\le \int_V|f-g|d\mu+\int_{X\bs V}|f-g|d\mu+\frac1{12}+\int_K|f-g|d\mu_0+\int_{X\bs K}|f-g|d\mu_0\le\\
&\le \frac16+\frac{2C}{12\,C}+\frac1{12}+\frac1{12}+\frac{2C}{12\,C}<1.
\end{aligned}
$$
Thus, $b_X(\mu)\in U(b_X(\mu_0))$, which means that the map
$b_X:\HP(X)\to E^{**}$ is continuous. 
\end{proof}

\begin{remark}\label{r3.3} If $E$ is a closed subset of $E^{**}$, then $b_X(\HP(X))\subset E$ for any bounded set $X\subset E$. This implies from Lemma 3.1 and Theorem 3.2.
\end{remark}

\begin{definition} A convex subset $X$ of a locally convex space $E$ is called
\begin{itemize}
\itemsep=0pt
\parsep=0pt
\parskip=0pt
\item {\em barycentric} if $X$ is bounded in $E$ and $b_X(\HP(X))\subset X$;
\item {\em $\infty$-convex} if for any bounded sequence of points $(x_n)_{n\in\w}$ of $X$ and any sequence $(\lambda_n)_{n\in\w}$ of non-negative real numbers with $\sum_{n=0}^\infty \lambda_n=1$ the series $\sum_{n=0}^\infty\lambda_nx_n$ converges to some point $x\in X$;
\item {\em compactly convex} \ if for each compact subset $K\subset X$ its closed convex hull $\cl_X(\conv(K))\subset X$ is compact.
\end{itemize}
\end{definition}

\begin{proposition}\label{p3.5} Each compactly convex subset $X$ of a locally convex space is $\infty$-convex.
\end{proposition}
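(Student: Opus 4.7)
My plan is to realize $\sum_n \lambda_n x_n$ as the barycenter of an auxiliary Radon probability measure and to use compact convexity to ensure the limit stays in $X$.

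First I would introduce the measure $\mu := \sum_{n\in\w}\lambda_n\delta_{x_n}$. It is a Radon probability measure on $X$ because, given $\e>0$, choosing $N$ so that $\sum_{n>N}\lambda_n<\e$ makes the finite (hence compact) set $F_N:=\{x_0,\dots,x_N\}\subset X$ carry $\mu$-mass exceeding $1-\e$. Hence $\mu\in\HP(X)$. Next I would form the discrete truncations
$$\mu_N:=\sum_{n\le N}\lambda_n\delta_{x_n}+r_N\delta_{x_0},\qquad r_N:=1-\sum_{n\le N}\lambda_n,$$
each supported on the compact set $F_N$. Applying Lemma~3.1 (affinity of the barycenter on finite convex combinations of Dirac measures) gives
$$s_N:=b_X(\mu_N)=\sum_{n\le N}\lambda_n x_n + r_N x_0\in\conv(F_N),$$
and compact convexity of $X$ provides the compact subset $C_N:=\cl_X(\conv(F_N))\subset X$ containing $s_N$.

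Second I would use boundedness of $(x_n)$ to show $(s_N)$ is Cauchy in $E$: for any continuous seminorm $p$ on $E$, pick $M$ with $p(x_n-x_0)\le M$ for every $n$; then $p(s_{N+k}-s_N)\le M\sum_{n=N+1}^{N+k}\lambda_n$, which tends to $0$ as $N\to\infty$ uniformly in $k$. The remaining task is to convert this Cauchyness into genuine convergence inside $X$. For a fixed $\e>0$ and seminorm $p$, the choice of $N_0$ with $M\sum_{n>N_0}\lambda_n<\e$ makes every $s_N$ with $N\ge N_0$ lie within $p$-distance $2\e$ of the compact set $C_{N_0}\subset X$; a cluster-point extraction inside the compact set $C_{N_0}$, combined with Cauchyness, produces a limit $y\in C_{N_0}\subset X$. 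Since $r_N x_0\to 0$, this $y$ is exactly $\sum_n\lambda_n x_n$, proving $\infty$-convexity.

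The main obstacle is the last upgrade from Cauchyness to convergence in $X$: the set $\{x_n\}$ is only assumed bounded, so it need not be precompact, and Cauchy sequences in a general locally convex space need not converge. Thus compactness cannot be pulled directly from $\{x_n\}$; it must be extracted from the $\e$-trimmed finite subsets $F_{N_0}$ via compact convexity, and the limit inside $X$ is then produced by a careful cluster-point / diagonalization argument over the directed family of seminorms and tail-masses.
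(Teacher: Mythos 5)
There is a genuine gap, and it sits exactly where you flag the ``main obstacle'': your argument never uses the hypothesis of compact convexity in a non-trivial way. Every application of it in your proof is to the finite sets $F_N=\{x_0,\dots,x_N\}$, for which $\cl_X(\conv(F_N))=\conv(F_N)$ is automatically compact in \emph{any} convex subset of a locally convex space (a continuous image of a finite-dimensional simplex). So if your argument worked it would show that every bounded convex set is $\infty$-convex, which is false: the paper's own example $\sigma$ (the finitely supported sequences in $(-1,1)^\w\subset\IR^\w$) is convex and bounded but not $\infty$-convex; taking $x_n=\frac12e_n$ (coordinate unit vectors) and $\lambda_n=2^{-n-1}$, each $C_N=\conv(F_N)$ is compact and lies in $\sigma$, yet $\sum_n\lambda_nx_n\notin\sigma$. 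Concretely, the step that fails is the ``cluster-point extraction inside $C_{N_0}$'': the partial sums $s_N$ with $N\ge N_0$ do not lie in $C_{N_0}$, they are merely $p$-close to it for one seminorm at a time, and the limit of the Cauchy sequence $(s_N)$ in general lies outside every $C_{N_0}$ and outside $X$. No diagonalization over seminorms and tail-masses can repair this, because the only conclusion such an argument yields is that the limit lies in the closure of $\bigcup_NC_N$ taken in $E$, not in $X$.

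The missing idea --- which is the entire content of the paper's proof --- is to manufacture an \emph{infinite} compact subset of $X$ to which compact convexity can be applied. Fix $m$ with $\lambda_m>0$, choose $\e_n\in(0,1)$ with $\e_n\to0$ and $\sum_{n\ne m}\lambda_n/\e_n\le1$, and replace each $x_n$ by the shrunken point $x_n'=(1-\e_n)x_m+\e_nx_n\in X$. Boundedness of $(x_n)$ forces $x_n'\to x_m$, so $K=\{x_n'\}_{n\in\w}$ is compact even though $\{x_n\}_{n\in\w}$ need not be precompact, and $\cl_X(\conv(K))$ is compact by hypothesis. Rewriting $\sum_n\lambda_nx_n=\sum_n\lambda_n'x_n'$ with $\lambda_n'=\lambda_n/\e_n$ for $n\ne m$ and $\lambda_m'=1-\sum_{n\ne m}\lambda_n'$ exhibits the sum as a convex series of points of $K$; its partial sums are Cauchy and have a cluster point in the compact set $\cl_X(\conv(K))\subset X$, which is where convergence \emph{inside} $X$ actually comes from. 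Your measure-theoretic framing is harmless but unnecessary where you use it (only on finitely supported measures), and it would be circular if applied to the infinite-support measure $\mu$, since the inclusion $b_X(\HP(X))\subset X$ is precisely what Theorem~3.6 derives \emph{from} this proposition.
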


\begin{proof} To show that $X$ is $\infty$-convex, fix a bounded sequence $(x_n)_{n\in\w}$ in $X$ and a sequence $(\lambda_n)_{n\in\w}$ of non-negative real numbers with $\sum_{n\in\w}\lambda_n=1$. Choose any number $m\in\w$ with $\lambda_m>0$. Then $\sum_{n\ne m}\lambda_n=1-\lambda_m<1$ and we can choose a sequence of positive real numbers $\e_n<1$, $n\in\w$, such that $\lim_{n\to\infty}\e_n=0$ and $\sum_{n\ne m}\lambda_n/\e_n\le 1$. Let $\lambda_n'=\lambda_n/\e_n$ for $n\ne m$ and $\lambda_m'=1-\sum_{n\ne m}\lambda_n'$. 

For every $n\in\w$ consider the point $x_n'=(1-\e_n)x_m+\e_n x_n\in X$ and observe that the sequence $(x_n')_{n\in\w}$ tends to $x_m=x'_m$. Then the set $K=\{x_n'\}_{n\in\w}\subset X$ is compact and so is its closed convex hull $\cl_X(\conv(K))$ in $X$. Now observe that
$$
\begin{aligned}
\sum_{n\in\w}\lambda_nx_n&=\lambda_mx_m+\sum_{n\ne m}\lambda_n'\e_nx_n=\\
&=\lambda_mx_m-\sum_{n\ne m}\lambda_n'(1-\e_n')x_m+\sum_{n\ne m}\lambda_n'((1-\e_n)x_m+\e_nx_n)=\\
&=\big(\lambda_m-\sum_{n\ne m}\lambda_n'+\sum_{n\ne m}\lambda_n'\e_n'\big)x_m+\sum_{n\ne m}\lambda_n'x_n'=\\
&=\big(\lambda_m-\sum_{n\ne m}\lambda_n'+\sum_{n\ne m}\lambda_n\big)x_m+\sum_{n\ne m}\lambda_n'x_n'=\\
&=\big(\sum_{n\in\w}\lambda_n-\sum_{n\ne m}\lambda_n'\big)x_m+\sum_{n\ne m}\lambda_n'x_n'=\\
&=\big(1-(1-\lambda_m')\big)x_m+\sum_{n\ne m}\lambda_n'x_n'=\sum_{n\in\w}\lambda_n'x_n'\in\cl_X(\conv(K))\subset X,
\end{aligned}
$$
witnessing that the set $X$ is $\infty$-convex.
\end{proof}

\begin{theorem}\label{t3.6} A bounded convex subset $X$ of a locally convex space $E$ is barycentric if and only if it is compactly convex.
\end{theorem}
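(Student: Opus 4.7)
\emph{Compactly convex $\Rightarrow$ barycentric.} The key auxiliary fact I would establish first is that whenever $C\subset E$ is a compact convex subset and $\nu\in\HP(C)$, the barycenter $b_C(\nu)$ lies in $C$ itself. To see this, note that finite convex combinations $\sum_i\alpha_i\delta_{y_i}$ with $y_i\in C$ are weak-$*$ dense in $\HP(C)$ (a standard fact for compact Hausdorff spaces); their barycenters $\sum_i\alpha_i y_i$ all lie in $\conv(C)=C$. By the continuity of $b_C$ (Theorem~\ref{t3.2}), together with the fact that $C$, being compact in $E$, embeds as a compact and hence closed subset of $E^{**}$, we conclude $b_C(\nu)\in C$.

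Now let $X$ be compactly convex and fix $\mu\in\HP(X)$. Using Radon regularity, pick an increasing sequence of compacta $K_n\subset X$ with $\mu(K_n)>1-2^{-n}$; set $A_0=K_0$, $A_n=K_n\setminus K_{n-1}$ for $n\ge 1$, $\lambda_n=\mu(A_n)$, and (when $\lambda_n>0$) $\mu_n=\mu|_{A_n}/\lambda_n\in\HP(X)$. Since $\mu_n$ is concentrated on $K_n$ and $C_n:=\cl_X(\conv(K_n))$ is compact and convex and contains $K_n$, the auxiliary fact gives $x_n:=b_X(\mu_n)\in C_n\subset X$. As $(x_n)$ is bounded (lying in $X$), Proposition~\ref{p3.5} ensures that $y:=\sum_n\lambda_nx_n$ converges to a point of $X$. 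For every $f\in E^*$, continuity and $\sigma$-additivity yield
$$
f(y)=\sum_n\lambda_n f(x_n)=\sum_n\lambda_n\int_X (f|X)\,d\mu_n=\int_X (f|X)\,d\mu=b_X(\mu)(f),
$$
so $b_X(\mu)=y\in X$ since $E^*$ separates points of $E^{**}$, proving that $X$ is barycentric.

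\emph{Barycentric $\Rightarrow$ compactly convex.} Let $K\subset X$ be compact. Then $\HP(K)$ is compact, and $b_K\colon\HP(K)\to E^{**}$ is continuous by Theorem~\ref{t3.2}, so $b_K(\HP(K))$ is a compact convex subset of $E^{**}$. Lemma~\ref{l3.1} and the Dirac embedding give $K\subset b_K(\HP(K))$, while the inclusion $\HP(K)\subset\HP(X)$ combined with barycentricity of $X$ yields $b_K(\HP(K))\subset X$. Being compact in the Hausdorff space $X$, this image is closed in $X$, so it contains $\cl_X(\conv(K))$, which therefore is a closed subset of a compact set and hence compact.

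The main obstacle is the auxiliary fact: one must combine weak-$*$ density of discrete measures in $\HP(C)$ (standard but worth making explicit) with the observation that the natural topology on $E^{**}$ restricts to the original topology on $E$, so that a compact $C\subset E$ remains closed after embedding. Everything else is bookkeeping that assembles this lemma with the Radon decomposition of $\mu$ and $\infty$-convexity from Proposition~\ref{p3.5}.
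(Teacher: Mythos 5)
Your proof is correct and follows essentially the same route as the paper: the same Radon decomposition of $\mu$ into normalized pieces supported on an increasing sequence of compacta, the same appeal to Proposition~\ref{p3.5} for $\infty$-convexity, and the same use of compactness of $P(K)$ for the converse direction. The only differences are matters of explicitness — you isolate as an auxiliary lemma the fact that barycenters of measures on compact convex sets stay inside (which the paper uses implicitly via $b_X(\mu)\in\cl_X(\conv(\supp\mu))$ and $\cl_X(\conv(K))=b_X(P(K))$), and you justify $b_X(\sum_n\lambda_n\mu_n)=\sum_n\lambda_n b_X(\mu_n)$ by testing against each $f\in E^*$ rather than by citing affineness and continuity — both of which are sound and, if anything, slightly more careful than the original.
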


\begin{proof} To prove the ``only if'' part, assume that the set $X$ is barycentric. Then for each compact subset $K\subset X$ its probability measure space $P(K)\subset \HP(X)$ is compact [9,
VII.3.5]. Since the map $b_X:\HP(X)\to X$ is continuous, $b_X(P(K))$ is compact in $X$ and so is the closed convex hull $\cl_X(\conv(K))=b_X(P(K))$ of $K$ in $X$. This means that the set $X$ is compactly convex.

To prove the ``if part'', assume that the convex set $X\subset E$ is compactly convex. By Proposition~\ref{p3.5}, the set $X$ is $\infty$-convex. To prove that $X$ is barycentric, fix a Radon measure $\mu\in\HP(X)$ on $X$. If
the support $\supp(\mu)=\{x\in X\colon $ every neighborhood $U\ni x$ has a non-zero $\mu$-measure$\}$ of the measure $\mu$ is compact, then, by the compact convexity of $X$ its closed convex hull
$\cl_X(\conv(\supp(\mu)))$ is compact. In this case,
$b_X(\mu)\in\cl_X(\conv(\supp(\mu)))\subset X$. If the support of the measure $\mu$
is not compact, then there exists a sequence if compacta
$\emptyset=K_0\subset K_1\subset\dots\subset K_n\subset\dots\subset X$,
such that $0=\mu(K_0)<\mu(K_1)<\dots<\mu(K_n)<\dots <1$ and $\mu(K_n)>1-2^{-n}$,
$n\in\IN$. For every $n\in\IN$ let $\e_n=\mu(K_n\bs K_{n-1})$ and
$\mu_n\in P(K_n)\subset \HP(X)$ be the measure, defined by the formula
$\mu_n(A)=\mu((K_n\bs K_{n-1})\cap A)/\e_n$, where $A$ is a Borel subset of $X$. One can easily observe that $\sum_{n=1}^\infty \e_n=1$ and
$\mu=\sum_{n=1}^\infty \e_n\mu_n$. Since the map $b_X:\HP(X)\to
E^{**}$ is affine and continuous, $b_X(\sum_{n=1}^\infty
\e_n\mu_n)=\sum_{n=1}^\infty \e_nb_X(\mu_n)\in E^{**}$. By the compact convexity of $X$,
$b_X(\mu_n)\in X$, $n\in\IN$, and by the $\infty$-convexity of $X$,
$b_X(\mu)=\sum_{n=1}^\infty \e_nb_X(\mu_n)\in X$. So, the set $X$ is barycentric.
\end{proof}

Let us recall that a map $f:A\to B$ between topological spaces is called {\em quotient} if a set $U\subset B$ is open if and only if the set $f^{-1}(U)\subset A$ is open. 

\begin{proposition}\label{p3.7} For any barycentric set $X$ an a locally convex space, the berycenter map $b_X:\HP(X)\to X$ is surjective and quotient.
\end{proposition}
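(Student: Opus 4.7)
My plan is to exhibit a continuous section of $b_X$ given by the Dirac embedding $\delta\colon X\to\HP(X)$, $x\mapsto\delta_x$. By Lemma~\ref{l3.1} we have $b_X\circ\delta=\id_X$, which immediately yields surjectivity of $b_X$. Continuity of $\delta$ is a routine consequence of the weak$^*$-topology on $\HP(X)\subset C_b^*(X)$: for every $f\in C_b(X)$ the function $x\mapsto\delta_x(f)=f(x)$ is continuous, and this is in any case recorded in the companion article [1].

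With the continuous section $\delta$ in hand, the quotient property of $b_X$ is automatic. The map $b_X$ is continuous by Theorem~\ref{t3.2}, since a barycentric set is bounded by definition, and it takes values in $X$ because $b_X(\HP(X))\subset X$ is built into the notion of barycentricity. Now suppose $U\subset X$ satisfies $b_X^{-1}(U)$ is open in $\HP(X)$. Then
$$\delta^{-1}\bigl(b_X^{-1}(U)\bigr)=(b_X\circ\delta)^{-1}(U)=U$$
is open in $X$ by continuity of $\delta$, so $b_X$ is quotient.

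There is no serious obstacle: the argument is the standard observation that any continuous surjection admitting a continuous right inverse is a quotient map (in fact, a topological retraction). The only ingredients being used are Lemma~\ref{l3.1}, the continuity of $b_X$ supplied by Theorem~\ref{t3.2}, the barycentricity hypothesis $b_X(\HP(X))\subset X$, and the elementary continuity of the Dirac embedding. Consequently the proof should occupy only a few lines.
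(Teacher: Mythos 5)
Your proof is correct and follows essentially the same route as the paper, which likewise deduces the claim from the single fact that $b_X(\delta_x)=x$, i.e.\ that the Dirac embedding is a continuous section of $b_X$. You have merely written out the standard ``continuous surjection with a continuous right inverse is quotient'' argument that the paper leaves implicit.
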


{\em The proof} follows the fact that for each point $x\in X$ the Dirac measure $\delta_x$ concentrated in $x$ has barycenter  $b_X(\delta_x)=x$.
\vskip5pt

Now let us prove several results on the preservation of compactly or $\infty$-convex sets by some operations. One can easily prove the following propositions.

\begin{proposition}\label{p3.8} If a convex subset $X$ of a locally convex space is compactly ($\infty$-) convex, then each closed convex subset of $X$ also has that property.
\end{proposition}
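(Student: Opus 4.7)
The plan is to let $Y \subset X$ be a closed convex subset and handle the two properties separately, in each case reducing to the fact that closures in $X$ of subsets of $Y$ stay inside $Y$.

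For the compactly convex case, I would pick any compact $K \subset Y$. Since $K \subset X$ and $X$ is compactly convex, $\cl_X(\conv(K))$ is compact in $X$. By convexity of $Y$, the set $\conv(K)$ lies in $Y$; since $Y$ is closed in $X$, we get $\cl_X(\conv(K)) \subset Y$, and consequently $\cl_Y(\conv(K)) = \cl_X(\conv(K))$. This set is compact, so $Y$ is compactly convex.

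For the $\infty$-convex case, I would take a bounded sequence $(x_n)_{n\in\w}$ in $Y$ and non-negative $\lambda_n$ with $\sum_{n=0}^\infty \lambda_n = 1$. Since $Y \subset X$, the sequence is bounded in $X$, so by $\infty$-convexity of $X$ the partial sums $s_N = \sum_{n=0}^N \lambda_n x_n$ converge in $X$ to some point $x \in X$, and it remains to verify that $x \in Y$. The key trick is to renormalize: setting $\sigma_N = \sum_{n=0}^N \lambda_n$, for $N$ with $\sigma_N > 0$ the point $s_N/\sigma_N$ is a finite convex combination of elements of $Y$, hence lies in $Y$ by convexity. Since $\sigma_N \to 1$ and $s_N \to x$, we have $s_N/\sigma_N \to x$, so $x \in \cl_X(Y) = Y$ because $Y$ is closed in $X$. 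Thus $\sum_{n=0}^\infty \lambda_n x_n \in Y$, showing $Y$ is $\infty$-convex.

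There is no serious obstacle here; the only subtlety is the renormalization step in the $\infty$-convex case, needed because the finite partial sums $s_N$ are not themselves convex combinations and so need not lie in $Y$ until one divides by $\sigma_N$ and passes to the limit using closedness of $Y$.
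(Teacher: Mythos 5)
Your proof is correct; the paper itself leaves this proposition as an easy exercise (``One can easily prove the following propositions''), and your argument is exactly the natural one intended: closedness and convexity of $Y$ trap $\cl_X(\conv(K))$ inside $Y$ in the compact case, and the renormalized partial sums $s_N/\sigma_N$ are genuine convex combinations lying in $Y$ whose limit is the sum of the series, so closedness of $Y$ finishes the $\infty$-convex case. The renormalization step you flag is indeed the only point requiring any care, and you handle it properly.
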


\begin{proposition}\label{p3.9} Let $X_i$, $i\in\I$, be convex subsets of locally convex spaces $E_i$, $i\in\I$, respectively. If all
$X_i$, $i\in\I$, are compactly ($\infty$-) convex, then their product
$\prod_{i\in\I}X_i\subset \prod_{i\in\I}E_i$ also has that property.
\end{proposition}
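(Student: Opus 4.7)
\medskip

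My plan is to handle both properties coordinatewise, exploiting the fact that in a product of locally convex spaces convergence, compactness, and boundedness are all detected by the projections $\pr_i:\prod_j E_j\to E_i$.

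For the compactly convex case, I would start with a compact set $K\subset\prod_i X_i$. Each projection $K_i=\pr_i(K)$ is compact in $X_i$, so by compact convexity of $X_i$ the closed convex hull $C_i=\cl_{X_i}(\conv(K_i))$ is compact in $X_i$. Tychonoff's theorem then makes $C=\prod_i C_i$ compact in $\prod_i X_i$. Since $C$ is closed and convex in $\prod_i E_i$ and contains $K$, it contains $\cl_{\prod X_i}(\conv(K))$, which is therefore compact as a closed subset of a compact space. This gives compact convexity of the product.

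For the $\infty$-convex case, I would take a bounded sequence $(x_n)_{n\in\w}$ in $\prod_i X_i$ and non-negative reals $(\lambda_n)_{n\in\w}$ with $\sum_{n}\lambda_n=1$. Since each projection $\pr_i$ is continuous and linear, the sequence $(\pr_i(x_n))_{n\in\w}$ is bounded in $X_i$, so by $\infty$-convexity of $X_i$ the series $\sum_{n}\lambda_n\pr_i(x_n)$ converges to some point $y_i\in X_i$. The partial sums $s_N=\sum_{n=0}^{N}\lambda_n x_n$ then satisfy $\pr_i(s_N)\to y_i$ in $E_i$ for every $i\in\I$; as convergence in the product topology is coordinatewise, $s_N\to (y_i)_{i\in\I}\in\prod_i X_i$, as required.

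Neither half involves a genuine obstacle; the only point deserving care is the appeal to the fact that a subset of $\prod_i E_i$ is bounded if and only if each of its projections is bounded in $E_i$, which is a standard property of the product topology on locally convex spaces and what lets the hypothesis of $\infty$-convexity be invoked on each factor. Once this is noted, the argument is essentially bookkeeping over the index set $\I$.
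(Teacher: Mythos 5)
Your coordinatewise argument is correct: the paper states Proposition~3.9 without proof (it is listed among the propositions that ``one can easily prove''), and your reduction to the factors via the projections $\pr_i$ --- Tychonoff's theorem plus closedness and convexity of $\prod_i\cl_{X_i}(\conv(\pr_i(K)))$ for the compact case, and coordinatewise convergence of the partial sums together with coordinatewise boundedness for the $\infty$-convex case --- is exactly the intended routine verification. No gaps; the one point you rightly single out (boundedness in a product of locally convex spaces is detected by the projections) is the only place where anything needs to be said.
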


\begin{proposition}\label{p3.10} Let $X_i$, $i\in\I$, be convex subsets of a locally convex space $E$. If all $X_i$, $i\in\I$, are compactly ($\infty$-) convex, then their intersection $\bigcap_{i\in\I}X_i$ also has this property.
\end{proposition}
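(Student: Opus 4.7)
The plan is to first note that $Y := \bigcap_{i\in\I} X_i$ is convex (being the intersection of convex sets), and then to verify each of the two properties by reducing to an arbitrary individual $X_i$ and gluing the conclusions using the Hausdorff topology of $E$.

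For the $\infty$-convex case, I would start from a bounded sequence $(x_n)_{n\in\w}\subset Y$ and non-negative weights $(\lambda_n)_{n\in\w}$ with $\sum_{n\in\w}\lambda_n=1$. Since $(x_n)$ is bounded in each individual $X_i$, $\infty$-convexity of $X_i$ produces a sum $y_i\in X_i$ for the series $\sum_{n\in\w}\lambda_n x_n$. Uniqueness of limits in the Hausdorff space $E$ forces all the $y_i$ to coincide with a single point $y$, which therefore lies in $\bigcap_{i\in\I}X_i = Y$. This settles the $\infty$-convex part almost purely formally.

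For the compactly convex case, given a compact set $K\subset Y$, the set $C_i := \cl_{X_i}(\conv(K))$ is compact for every $i\in\I$ by compact convexity of $X_i$. Since compact subsets of a Hausdorff space are closed, $C_i$ is in fact closed in $E$; combined with $\conv(K)\subset C_i$, this yields $\cl_E(\conv(K))\subset C_i\subset X_i$ for every $i$, and hence $\cl_E(\conv(K))\subset Y$. Consequently $\cl_Y(\conv(K)) = \cl_E(\conv(K))$, and this latter set coincides with the compact set $C_{i_0}$ for any fixed index $i_0$, so it is itself compact. The only step genuinely requiring care is the promotion of $\cl_{X_i}(\conv(K))$ from being closed in the subspace $X_i$ to being closed in the ambient $E$; that is exactly what allows the relative closed convex hull to escape into the ambient space and land inside $Y$.
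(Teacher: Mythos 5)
Your argument is correct. The paper offers no proof of Proposition~3.10 (it is listed among the statements that ``one can easily prove''), and the reasoning you give --- uniqueness of limits in the Hausdorff space $E$ for the $\infty$-convex part, and the promotion of the compact set $\cl_{X_i}(\conv(K))$ to a set closed in $E$ (hence containing $\cl_E(\conv(K))$, which therefore lands in every $X_i$) for the compactly convex part --- is exactly the intended routine verification.
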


\begin{proposition}\label{p3.11} If a convex bounded subset $X$ of a locally convex space is $\infty$-convex and $X$ a countable union of compactly convex Borel subsets, then $X$ is compactly convex.
\end{proposition}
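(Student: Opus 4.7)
The plan is to show that $X$ is barycentric and then invoke Theorem~\ref{t3.6} to deduce compact convexity. Since $X$ is bounded, Theorem~\ref{t3.2} yields continuity of the barycenter map $b_X:\HP(X)\to E^{**}$, and Lemma~\ref{l3.1} gives its affinity; these, together with the assumed $\infty$-convexity of $X$, are the main tools.

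The first step is to disjointize the cover: set $A_n=X_n\bs\bigcup_{k<n}X_k$, so that $\{A_n\}_{n\in\w}$ is a Borel partition of $X$ with $A_n\subset X_n$. Given $\mu\in\HP(X)$, put $\lambda_n=\mu(A_n)$ and, whenever $\lambda_n>0$, define $\mu_n\in\HP(X)$ by $\mu_n(B)=\mu(B\cap A_n)/\lambda_n$; then $\sum_n\lambda_n=1$ and $\mu=\sum_n\lambda_n\mu_n$. Because $\mu_n$ is concentrated on the Borel set $A_n\subset X_n$, inner regularity of Radon measures on Borel sets lets us restrict $\mu_n$ to a Radon probability measure $\tilde\mu_n$ on $X_n$. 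Compact convexity of $X_n$ together with Theorem~\ref{t3.6} makes $X_n$ barycentric, so $b_{X_n}(\tilde\mu_n)\in X_n$. Since both barycenters are computed by integrating functionals $f\in E^*$ against a measure supported in $X_n$, we get $b_X(\mu_n)=b_{X_n}(\tilde\mu_n)\in X_n\subset X$; denote this point by $y_n$.

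Because $(y_n)_{n\in\w}$ is a bounded sequence in $X$ with $\sum_n\lambda_n=1$, the $\infty$-convexity of $X$ produces a point $y:=\sum_n\lambda_ny_n\in X$. To identify $y$ with $b_X(\mu)$, fix $x_0\in X$ arbitrarily and form the probability measures $\nu_N=\sum_{n\le N}\lambda_n\mu_n+(1-S_N)\delta_{x_0}$, where $S_N=\sum_{n\le N}\lambda_n$. A routine tail estimate on bounded continuous test functions gives $\nu_N\to\mu$ in $\HP(X)$; continuity of $b_X$ then forces $b_X(\nu_N)\to b_X(\mu)$ in $E^{**}$, while affinity of $b_X$ together with $b_X(\delta_{x_0})=x_0$ yields $b_X(\nu_N)=\sum_{n\le N}\lambda_ny_n+(1-S_N)x_0$, which converges to $y$ in $E$ and hence in $E^{**}$. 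Therefore $b_X(\mu)=y\in X$, so $X$ is barycentric, and Theorem~\ref{t3.6} delivers compact convexity.

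The step I expect to be most delicate is the identification $b_X(\mu_n)=b_{X_n}(\tilde\mu_n)$, since one has to verify that a Radon measure on $X$ concentrated on a Borel (not closed) subset of $X_n$ genuinely passes to a Radon measure on $X_n$ and that integrals of continuous linear functionals are preserved under this passage. Once this is in place, the remaining assembly---finite-sum approximation, continuity of $b_X$, affinity, and the $\infty$-convex summation---is routine.
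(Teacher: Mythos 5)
Your proof is correct, and its overall strategy coincides with the paper's: reduce to showing that $X$ is barycentric via Theorem~\ref{t3.6}, decompose $\mu$ into a countable convex combination along the disjointized cover, place the barycenter of each piece inside the corresponding $X_n$, and conclude with $\infty$-convexity together with the affinity and continuity of $b_X$. The genuine difference is in how each piece is handled. You stop at the single-indexed decomposition $\mu=\sum_n\lambda_n\mu_n$ with $\mu_n$ concentrated on the Borel set $A_n=X_n\bs\bigcup_{k<n}X_k$, check that the normalized restriction $\tilde\mu_n$ is a Radon measure on the subspace $X_n$, and then invoke the already-proved implication of Theorem~\ref{t3.6} for $X_n$ to get $b_X(\mu_n)=b_{X_n}(\tilde\mu_n)\in X_n$. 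The paper instead refines the decomposition a second time, writing the restriction of $\mu$ to each $A_n$ as a countable convex combination of measures $\mu_n^m$ supported on compacta $K_n^m\bs K_n^{m-1}\subset X_n$, so that $b_X(\mu_n^m)\in\cl_{X_n}(\conv(K_n^m))\subset X_n$ follows directly from the compact convexity of $X_n$, with no need to discuss restricting Radon measures to Borel subspaces. Your route is shorter and reuses Theorem~\ref{t3.6} as a black box; the paper's double-indexed decomposition works only with compactly supported pieces and so avoids the one delicate point you rightly flag. That point does go through: since Radon measures are inner regular with respect to compacta on all Borel sets, the compacta witnessing the tightness of $\tilde\mu_n$ may be taken inside $A_n\subset X_n$, so $\tilde\mu_n\in\HP(X_n)$ and the two barycenters agree because they integrate the same functionals against the same mass. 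Your explicit approximation $\nu_N\to\mu$ justifying $b_X\big(\sum_n\lambda_n\mu_n\big)=\sum_n\lambda_n b_X(\mu_n)$ makes precise a step the paper leaves implicit in the phrase ``affine and continuous.''
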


\begin{proof} By Theorem 3.6, it suffices to prove that $X$ is barycentric. So, let
$\mu\in\HP(X)$ be a Radon measure on $X=\bigcup_{n=1}^\infty X_n$, where
$X_n\subset X$ are compactly convex Borel subsets of $X$. Since the measure $\mu$ is Radon, for every $n,m\in\IN$ there exists a compact subset 
$K_n^m\subset X_n\bs (\bigcup_{i<n}X_i)$ such that
$\mu(K_n^m)\ge (1-2^{-m})\mu(X_n\bs (\bigcup_{i<n}X_i))$. For any
$n,m\in\IN$ let $\e_n^m=\mu(K^m_n\bs K_n^{m-1})$ (we assume that
$K_n^0=\emptyset$) and, if $\e_n^m>0$, then we can define a measure
$\mu_n^m\in\HP(X)$ by the following formula: $\mu_n^m(A)=\mu((K_n^m\bs
K_n^{m-1})\cap A)/\e_n^m$, where $A$ is a Borel subset of $X$. One can easily see that $\sum_{n,m=1}^\infty \e_n^m=1$ and $\sum_{n,m=1}^\infty
\e_n^m\mu_n^m=\mu$. Since the support of every measure $\mu_n^m$ is contained in the compact set $K_n^m\subset X_n$, $b_X(\mu_n^m)\in X_n$, $n,m\in\IN$
(let us recall that the sets $X_n$, $n\in\IN$, are compactly convex).
Let us show that $b_X(\mu)\in X$. Indeed, since the map
$b_X:\HP(X)\to E^{**}$ is affine and continuous,
$b_X(\mu)=b_X(\sum_{n,m=1}^\infty\e_n^m\mu_n^m)=\sum_{n,m=1}^\infty\e_n^m
b_x(\mu_n^m)\in X$ by the $\infty$-convexity of $X$. 
\end{proof}

\begin{proposition}\label{p3.12} For any $\infty$-convex subset $X$ of a locally convex space $E$ and any affine continuous map $T:E\to E'$ to a locally convex space $E'$, the image $T(X)$ is $\infty$-convex.
\end{proposition}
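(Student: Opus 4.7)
The plan is to reduce the $\infty$-convexity of $T(X)$ to that of $X$ by lifting a bounded combination back along $T$ and pushing the result forward. Start with a bounded sequence $(y_n)_{n\in\w}$ in $T(X)$ together with non-negative reals $(\lambda_n)_{n\in\w}$ satisfying $\sum_{n=0}^\infty\lambda_n=1$; the goal is to exhibit a point of $T(X)$ to which the series $\sum_{n=0}^\infty\lambda_n y_n$ converges. I would write $T=L+T(0)$, where $L:E\to E'$ is the (automatically continuous) linear part of $T$, so that the subsequent calculation can freely exchange $L$ with limits and a constant $T(0)$ with a convex combination.

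The first step, and the main obstacle, is to choose preimages $x_n\in T^{-1}(y_n)\cap X$ in such a way that the resulting sequence $(x_n)_{n\in\w}$ is bounded in $E$. The vectors $L(x_n)=y_n-T(0)$ are bounded in $E'$ by the assumption on $(y_n)$; the remaining freedom in picking representatives of the convex affine fibres $T^{-1}(y_n)\cap X$ (whose points differ pairwise by elements of $\ker L$) should be used to confine $(x_n)_{n\in\w}$ to a bounded subset of $X$. In the principal case where $X$ is itself bounded in $E$, every lift is automatically bounded and this step is vacuous; in general it is the only place where a genuine choice must be made.

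Once such a bounded lift $(x_n)_{n\in\w}$ is in hand, the $\infty$-convexity of $X$ delivers a point $x:=\sum_{n=0}^\infty\lambda_n x_n\in X$. The continuity and linearity of $L$ let me pass $L$ through this convergent series, giving $L(x)=\sum_{n=0}^\infty\lambda_n L(x_n)$; combining this with the identity $T(0)=\sum_{n=0}^\infty\lambda_n T(0)$ (valid because $\sum_n\lambda_n=1$) yields $T(x)=L(x)+T(0)=\sum_{n=0}^\infty\lambda_n(L(x_n)+T(0))=\sum_{n=0}^\infty\lambda_n T(x_n)=\sum_{n=0}^\infty\lambda_n y_n$. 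Hence the series converges to $T(x)\in T(X)$, establishing the $\infty$-convexity of $T(X)$.
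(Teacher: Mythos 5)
Your reduction is sound except at the one point you yourself flag as ``the main obstacle'', and that point is a genuine gap: a bounded sequence in $T(X)$ need not admit \emph{any} bounded lift to $X$, and no choice of representatives within the fibres can repair this in general. Concretely, take $E=\IR^2$, $E'=\IR$, $T=\pr_1$ and $X=\{(s,t)\in\IR^2\colon s>0,\ st\ge 1\}$. This $X$ is closed and convex in $\IR^2$, hence $\infty$-convex (any bounded sequence in $X$ has its closed convex hull compact and contained in $X$), and $T(X)=(0,\infty)$. The bounded sequence $y_n=1/(n+1)$ has fibres $T^{-1}(y_n)\cap X=\{1/(n+1)\}\times[n+1,\infty)$, so \emph{every} lift $x_n=(1/(n+1),t_n)$ has $t_n\ge n+1$ and is unbounded: your first step cannot be carried out. (The conclusion still holds in this example, since $(0,\infty)$ is $\infty$-convex, but not by your route.) The remainder of your argument --- writing $T=L+T(0)$, passing $L$ through the convergent series, and reassembling $\sum_n\lambda_nT(x_n)$ --- is correct once a bounded lift is available.

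What you actually prove is the case where $X$ itself is bounded, since then any lift is automatically bounded. For what it is worth, that special case is the only one the paper ever uses (the proposition is invoked solely to transfer $\infty$-convexity from the bounded open unit ball of $l_\infty$ to the radial interior $\Sigma$), and the paper supplies no proof at all, treating the statement as one of those that ``one can easily prove''. But as stated the proposition allows unbounded $X$, and there the lifting strategy is unavailable in principle; one would need an argument carried out downstream in $E'$ (for instance, working directly with the partial sums $\sum_{n\le N}\lambda_ny_n$, or adapting the rescaling device of Proposition~\ref{p3.5}), not a lift of the whole sequence. As written, your proof establishes a strictly weaker statement than the one claimed, and you should either add the hypothesis that $X$ is bounded or supply the missing mechanism for the unbounded case.
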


Krein's Theorem [7, IV.11.5] implies:

\begin{proposition}\label{p3.13} Every complete bounded convex subset of a locally convex set is barycentric.
\end{proposition}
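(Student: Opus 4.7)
The plan is to reduce the problem to Theorem~\ref{t3.6} and invoke Krein's theorem to supply the compact convexity. Concretely, given a complete bounded convex subset $X\subset E$, Theorem~\ref{t3.6} tells us that $X$ is barycentric if and only if it is compactly convex, so the task boils down to verifying compact convexity: for every compact $K\subset X$, the set $\cl_X(\conv(K))$ must be compact.

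First, I would note that $X$, being complete in the Hausdorff locally convex space $E$, is closed in $E$. In particular, for any compact $K\subset X$ one has $\conv(K)\subset X$ by convexity, and the closure of $\conv(K)$ in $X$ coincides with its closure in $E$. This closure is a closed subset of the complete set $X$, hence is itself complete.

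At this point I would cite Krein's theorem [7, IV.11.5] in the following form: in a Hausdorff locally convex space, the closed convex hull of a compact set is compact provided that closed convex hull is complete. Applying this to $K$ gives that $\cl_X(\conv(K))$ is compact, establishing compact convexity of $X$, and hence, via Theorem~\ref{t3.6}, that $X$ is barycentric.

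The only subtle point is matching the bookkeeping in the reference: different sources phrase Krein's theorem either for weak compactness of closed convex hulls of weakly compact sets or for compactness under a completeness hypothesis on the hull. Here the completeness of the hull is supplied for free by the completeness of $X$, so no separate quasi-completeness assumption on the ambient space $E$ is needed. Apart from that, the argument is a direct chain: \emph{complete $\Rightarrow$ closed, closed convex hull complete $\Rightarrow$ (Krein) compact, $\Rightarrow$ compactly convex $\Rightarrow$ (Theorem~\ref{t3.6}) barycentric.}
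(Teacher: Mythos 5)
Your argument is correct and is precisely the one the paper intends: the paper gives no proof beyond the phrase ``Krein's Theorem [7, IV.11.5] implies,'' and your chain (complete $\Rightarrow$ closed, hence $\cl_X(\conv(K))=\cl_E(\conv(K))$ is complete, hence compact by Krein, hence $X$ is compactly convex, hence barycentric by Theorem~\ref{t3.6}) is the natural way to fill that in. Nothing to add.
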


\begin{corollary}\label{c3.14} Any convex compact subset of a locally convex space is barycentric.
\end{corollary}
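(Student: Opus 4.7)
The plan is to derive this directly from Proposition~\ref{p3.13}, which already establishes that every complete bounded convex subset of a locally convex space is barycentric. So all that remains is to verify that a convex compact subset $X$ of a locally convex space $E$ automatically satisfies the two hypotheses ``complete'' and ``bounded''.

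First I would check boundedness. Given any neighborhood $U\subset E$ of zero, choose a balanced open neighborhood $V$ of zero with $V\subset U$. Cover the compact set $X$ by finitely many translates $x_1+V,\dots,x_k+V$, and then use the fact that $\{x_1,\dots,x_k\}$ lies in $nV$ for sufficiently large $n\in\IN$ (because $\bigcup_n nV=E$ by absorption) to conclude $X\subset (n+1)V\subset (n+1)U$. Since $U$ was arbitrary, every continuous seminorm is bounded on $X$, so $X$ is bounded in $E$; in particular $X$ is weakly bounded, which is what the definition of ``barycentric'' needs for $b_X$ to be defined at all.

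Second I would check completeness. Since $E$ is Hausdorff and locally convex, it carries a natural uniform structure, and any compact subspace of a Hausdorff uniform space is complete (every Cauchy filter on a compact space has a cluster point, hence converges). Thus $X$ is complete.

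With $X$ convex, bounded, and complete, Proposition~\ref{p3.13} (Krein's theorem) applies and gives that $X$ is barycentric, which is exactly the claim. There is no real obstacle here — the statement is intended as a clean specialization of Proposition~\ref{p3.13}, and the only thing to be careful about is recording the two standard facts (compact $\Rightarrow$ bounded in a TVS and compact $\Rightarrow$ complete in a Hausdorff uniform space).
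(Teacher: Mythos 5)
Your proposal is correct and follows exactly the route the paper intends: Corollary~\ref{c3.14} is stated as an immediate consequence of Proposition~\ref{p3.13}, and you supply the two standard facts (compact $\Rightarrow$ bounded and compact $\Rightarrow$ complete in a Hausdorff locally convex space) needed to make that deduction explicit. Nothing is missing.
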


Propositions 3.8 and 3.11 imply:

\begin{corollary}\label{c3.15} Any open bounded convex subset of a Banach space is barycentric.
\end{corollary}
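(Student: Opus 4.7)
The plan is to exhaust $U$ from inside by closed convex subsets and to invoke Proposition~\ref{p3.11}; the $\infty$-convexity of $U$ — which Proposition~\ref{p3.8} cannot supply — will be secured by the Minkowski functional. Translating if necessary, I may assume $0\in U$, and introduce the Minkowski functional $p:E\to[0,\infty)$ of $U$. Because $U$ is open, bounded, and a neighbourhood of $0$, $p$ is a continuous sublinear functional with $U=\{x\in E:p(x)<1\}$. For $n\ge 1$ set $U_n=\{x\in E:p(x)\le 1-\tfrac1n\}$; each $U_n$ is closed, convex, bounded, hence Borel, and $U=\bigcup_{n\ge 1}U_n$.

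I first upgrade the $U_n$ to compactly convex sets. The closure $\bar U\subset E$ is closed, bounded, convex, and — since $E$ is a Banach space — complete; hence Proposition~\ref{p3.13} makes $\bar U$ barycentric and Theorem~\ref{t3.6} makes it compactly convex. Each $U_n$ is a closed convex subset of $\bar U$, so Proposition~\ref{p3.8} yields that every $U_n$ is compactly convex.

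The central step is to verify the $\infty$-convexity of $U$. Let $(x_n)_{n\in\w}\subset U$ be bounded and $(\lambda_n)_{n\in\w}\subset[0,\infty)$ satisfy $\sum_{n}\lambda_n=1$. As $(x_n)$ is bounded in norm, the series $y=\sum_{n}\lambda_nx_n$ converges absolutely in $E$. For each $N$ the point
$$y_N=\sum_{n=0}^N\lambda_nx_n+\Big(1-\sum_{n=0}^N\lambda_n\Big)\cdot 0$$
is a convex combination of points of $U$, and sublinearity of $p$ gives $p(y_N)\le\sum_{n=0}^N\lambda_np(x_n)$. Letting $N\to\infty$ and using continuity of $p$, I obtain $p(y)\le\sum_{n}\lambda_np(x_n)=1-\sum_{n}\lambda_n(1-p(x_n))$. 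Since $p(x_n)<1$ for every $n$ and at least one $\lambda_n$ is positive, this last quantity is strictly less than $1$, so $y\in U$.

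With $U$ now $\infty$-convex and presented as a countable union $U=\bigcup_nU_n$ of compactly convex Borel subsets, Proposition~\ref{p3.11} yields that $U$ is compactly convex, and Theorem~\ref{t3.6} concludes that $U$ is barycentric. The main obstacle, as previewed, is the $\infty$-convexity: open convex sets are not closed in their closures, so one cannot read this off from Proposition~\ref{p3.8}, and it truly requires the continuous Minkowski functional afforded by the Banach structure.
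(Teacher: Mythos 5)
Your proof is correct and follows essentially the same route as the paper, which derives the corollary from Propositions~\ref{p3.8} and~\ref{p3.11} (via the decomposition $U=\bigcup_n\{p\le 1-\frac1n\}$ into closed convex, hence compactly convex, pieces inside the complete set $\cl(U)$). The only addition is your explicit Minkowski-functional verification of the $\infty$-convexity hypothesis of Proposition~\ref{p3.11}, a detail the paper leaves to the reader; it is carried out correctly.
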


Now we will consider some specific examples of barycentric sets. Let us recall that $Q=[-1,1]^\omega$ is a Hilbert cube, $s=(-1,1)^\omega$ is
its pseudo-interior, $\Sigma=\{(x_i)_{i=1}^\infty\in
Q:\sup_{i\in\IN}|x_n|<1\}$ is its radial-interior and
$\sigma=\{(x_i)_{i=1}^\infty\in s:x_i\ne0$ for a finite number of indices
$i\}$. All these spaces are assumed to be subsets of the locally convex space $\IR^\omega$. By Corollary 3.14, Hilbert cube $Q$ is barycentric, the pseudo-interior $s$ is barycentric by Proposition 3.10 and Corollary 3.15. Then, let us note that the radial-interior $\Sigma$ is an image of the unit open ball
$\{x\in l_\infty:\|x\|<1\}$ of the Banach space $l_\infty$ under the ``identity'' linear operator  $T:l_\infty\to \IR^\omega$.  Corollaries 3.12 and
 3.15 imply that the set $\Sigma$ is $\infty$-convex. Since $\Sigma$ is a union of convex compacta, Propositions 3.8 and Corollary 3.11 imply that the radial-interior $\Sigma$ is a convex barycentric set. Since $\Sigma^\omega=\bigcap_{n=1}^\infty (\Sigma^n\times
Q^{\omega-n})$, Proposition 3.9 implies that the convex set
$\Sigma^\omega$ is also barycentric. Further in the text we will see that there exist barycentric convex sets of any Borel complexity. On the other hand, the convex set $\sigma$ is not barycentric since it is not $\infty$-convex. For the same reason, the set
$\sigma\times Q$ is not barycentric neither. It is worth noting that the spaces
$\sigma\times Q$ and $\Sigma$ are homeomorphic.
\vskip5pt

In connection with barycentric sets the following problem appears naturally:

\begin{problem}\label{q3.13} Is the set $\HP(X)$ barycentric for every Tychonoff space $X$?
\end{problem}

We do not have the answer to this question. We will show, however, that the set
$\HP(X)$ is barycentric provided $\HP(X)=\Pt(X)$, or, if $X$ is a metrizable space. But let us first clarify a few points concerning the barycenter map
$b_{\HP(X)}$. Let $X$ be a Tychonoff space. In this case the barycenter map $b_{P(\beta X)}:P^2(\beta X)\to P(\beta X)$ is well-defined and coincides with the component $\psi_{\beta X}$ of the monad multiplication $\Bbb
P=(P,\delta ,\psi)$ (see [10]).
Since the space $\HP(X)$ can be naturally identified with a subspace of $P(\beta X)$, and $\HP^2(X)$  with a subspace of
$P^2(\beta X)$, by the definition we have that the barycenter map
$b_{\HP(X)}:\HP(\HP(X))\to (C_b^*(X))^{**}$ is a restriction of the map $b_{P(\beta X)}$. According to [1, 1.26], $b_{P(\beta
X)}(\Pt^2(X))\subset \Pt(X)$. If $\HP(X)=\Pt(X)$, then
$b_{\HP(X)}(\HP^2(X))=b_{P(\beta X)}(\HP^2(X))\subset b_{P(\beta
X)}(\Pt^2(X))\subset \Pt(X)=\HP(X)$, i.e. the set $\HP(X)$ is barycentric.

\begin{proposition}\label{p3.14} For every Tychonoff space $X$, the set $\HP(X)$ is $\infty$-convex.
\end{proposition}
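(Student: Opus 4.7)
The ambient locally convex space for $\HP(X)$ is $C_b^*(X)$, and since $\HP(X)$ consists of probability measures it is already norm-bounded, so any sequence $(\mu_n)_{n\in\w}\subset\HP(X)$ is automatically bounded. Given non-negative reals $(\lambda_n)_{n\in\w}$ with $\sum_n\lambda_n=1$, the natural candidate for the sum $\sum_n\lambda_n\mu_n$ is the set function
$$\mu(A)=\sum_{n=0}^\infty\lambda_n\mu_n(A),\quad A\in\mathrm{Borel}(X).$$
So the plan is in three steps: (i) verify that $\mu$ is a well-defined Borel probability measure on $X$; (ii) show that $\mu$ is Radon, hence $\mu\in\HP(X)$; (iii) show that the partial sums $\sum_{n\le N}\lambda_n\mu_n$ converge to $\mu$ in the topology of $\HP(X)\subset C_b^*(X)$.

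Step (i) is routine: countable additivity of $\mu$ follows from interchanging two non-negative sums (Beppo-Levi), and $\mu(X)=\sum_n\lambda_n=1$.

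Step (ii) is the main obstacle and is where Radonness of the individual $\mu_n$ enters. The idea is standard tightness juggling: fix $\e>0$, choose $N$ with $\sum_{n>N}\lambda_n<\e/2$, and for each $n\le N$ use that $\mu_n$ is Radon to pick a compact $K_n\subset X$ with $\mu_n(X\bs K_n)<\e/2$. Then $K=\bigcup_{n\le N}K_n$ is compact and
$$\mu(X\bs K)=\sum_{n=0}^\infty \lambda_n\mu_n(X\bs K)\le\sum_{n\le N}\lambda_n\mu_n(X\bs K_n)+\sum_{n>N}\lambda_n<\tfrac{\e}{2}+\tfrac{\e}{2}=\e.$$
This gives tightness of $\mu$. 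Combined with the routine fact that $\mu$ is inner-regular on compacta from below, which follows because each $\mu_n$ is, one concludes that $\mu\in\HP(X)$.

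Step (iii): the topology on $\HP(X)$ inherited from $C_b^*(X)$ is generated by the seminorms $\nu\mapsto|\nu(f)|$ for $f\in C_b(X)$ (or more precisely by uniform convergence on suitable bounded subsets, but evaluation at a single $f\in C_b(X)$ suffices to describe weak-$*$ convergence on bounded nets, and $\HP(X)$ is bounded). For every $f\in C_b(X)$ the tail estimate
$$\Big|\mu(f)-\sum_{n\le N}\lambda_n\mu_n(f)\Big|\le \|f\|_\infty\sum_{n>N}\lambda_n\longrightarrow 0$$
shows that $\sum_n\lambda_n\mu_n=\mu$ in $\HP(X)$. Together with the identity $\mu(f)=\sum_n\lambda_n\mu_n(f)$, which itself follows by monotone convergence applied to the partial sums of $\mu=\sum_n\lambda_n\mu_n$ for $f\ge 0$ and then by linearity, this completes the verification that $\HP(X)$ is $\infty$-convex. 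I expect step (ii) to be the delicate one, because it is the only place where the specific Radon (as opposed to merely $\tau$-smooth or Borel) nature of the summands $\mu_n$ is actually used.
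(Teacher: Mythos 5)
Your proof is correct and its key step (ii) is exactly the paper's argument: truncate the series at $N$ with $\sum_{n>N}\lambda_n<\e/2$, use Radonness of the finitely many $\mu_n$ with $n\le N$ to get compacta $K_n$, and bound $\mu(X\bs K)$ for $K=\bigcup_{n\le N}K_n$ by $\e$. The paper simply omits your routine steps (i) and (iii), since the series automatically converges weak-$*$ in the compact set $P(\beta X)$ and only tightness of the limit needs checking.
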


\begin{proof} Given sequences $\{\mu_n\}_{n=1}^\infty\subset \HP(X)$ and
$\{t_n\}_{n=1}^\infty\subset [0,1]$, $\sum_{n=1}^\infty t_n=1$, we should prove that $\sum_{n=1}^\infty
t_n\mu_n\in\HP(X)$. Given any $\e>0$, choose $m\in\IN$ such that
$\sum_{n=1}^mt_n>1-\frac\e2$. For every $i\le m$ the Radon property of the measure $\mu_i$ yields a compact subset $K_i\subset X$ with $\mu_i(K_i)>1-\frac\e2$. Then $K=\bigcup_{n=1}^mK_n$ is a compact subset of measure $$\big(\sum_{n=1}^\infty t_n\mu_n\big)(K)=\sum_{n=1}^\infty
t_n\mu_n(K)\ge \sum_{n=1}^m
t_n\mu_n(K_n)>\big(1-\frac\e2\big)\sum_{n=1}^mt_n>\big(1-\frac\e2\big)^2>1-\e,$$ which means that
the measure $\sum_{n=1}^\infty t_n\mu_n$ is Radon. 
\end{proof}

\begin{theorem}\label{t3.15} For every metrizable space $X$, the set
$\HP(X)$ is barycentric.
\end{theorem}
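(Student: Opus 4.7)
The plan is to verify the hypothesis of Theorem~3.6 and conclude that $\HP(X)$ is \emph{compactly convex}: given an arbitrary compact subset $\mathcal K\subset\HP(X)$, I must show that the closed convex hull $C:=\cl_{\HP(X)}(\conv(\mathcal K))$ is compact in $\HP(X)$. Combined with Theorem~3.6 and the already verified convexity and boundedness of $\HP(X)$ in $C_b^*(X)$, this will give that $\HP(X)$ is barycentric.

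The key input is a Prokhorov-type theorem for the metrizable $X$: a subset of $\HP(X)$ is relatively compact in the weak topology if and only if it is \emph{uniformly tight}, i.e.\ for every $\e>0$ there is a compact $K\subset X$ with $\mu(K)\ge 1-\e$ for every $\mu$ in the subset. Granting this, the rest is short. Apply the ``$\Rightarrow$'' direction to $\mathcal K$: for every $n\in\IN$, obtain a compact $K_n\subset X$ with $\mu(K_n)\ge 1-\frac1n$ for all $\mu\in\mathcal K$. The same estimate passes to $\conv(\mathcal K)$ trivially and to weak limits via the Portmanteau inequality $\mu(K_n)\ge\limsup_\alpha\mu_\alpha(K_n)$, which expresses upper semicontinuity of $\nu\mapsto\nu(K_n)$ at the closed set $K_n$. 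Therefore $C$ is uniformly tight; the ``$\Leftarrow$'' direction of Prokhorov gives that $C$ is relatively compact in $\HP(X)$, and being closed, $C$ is in fact compact.

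The main obstacle is establishing the Prokhorov equivalence for a metrizable but possibly non-separable $X$. The direction ``uniformly tight $\Rightarrow$ relatively compact'' can be handled by a Tychonoff-style diagonal argument: restricting each measure to the witnessing compact $K_n$ gives a net in the compact space $P(K_n)\subset\HP(X)$ (compactness of $P(K_n)$ was already used in Theorem~3.6), and by passing to successive subnets one glues together the limits on the $K_n$'s into a single Radon probability measure on $X$ which witnesses the required convergence. The direction ``compact $\Rightarrow$ uniformly tight'' is classical in the separable-metric setting; one reduces the general metric case to it by noting that every Radon measure on a metric space is supported on a separable closed subset, and then producing a single separable closed subspace $Y\subset X$ carrying all members of the compact $\mathcal K$, so that $\mathcal K$ sits inside $\HP(Y)$ where the classical theorem applies.
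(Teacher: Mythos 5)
Your proposal is correct and follows essentially the same route as the paper: both reduce to compact convexity via Theorem~3.6, invoke the cited Varadarajan theorem that a compact set $\K$ of Radon measures on a metric space is uniformly tight, and use upper semicontinuity of $\mu\mapsto\mu(K_n)$ on closed sets to transfer the tightness bounds to the closed convex hull. The only real difference is that the paper sidesteps what you call the main obstacle (the sufficiency direction of Prokhorov on a possibly non-separable $X$) by working inside $P(cX)$ for a compactification $cX$, where the closed convex hull is automatically compact and the uniform tightness conditions, being closed, merely certify that it lies in $\HP(X)$.
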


\begin{proof} Let $X$ be a metrizable space and $c\,X$ be any compactification of $X$. By Theorem 3.6, it suffices to prove that $\HP(X)$ is compactly convex. For this purpose, take any compact subset  $\K\subset \HP(X)\subset P(c\, X)$. By Theorem 2 [11, c.96], for every $n\in\IN$ there exists a compact subset $K_n\subset X$ such that $\mu(K_n)>1-2^{-n}$ for any measure $\mu\in\K$.
By Lemma [1, 1.19], the set $\K_n=\{\mu\in P(c\, X)\colon \mu(K_n)\ge
1-2^{-n}\}$ is closed in $P(c\, X)$ for all $n\in\IN$. Then $\tilde
\K=\bigcap_{n=1}^\infty\K_n$ is a convex compact in $P(c\, X)$. Moreover, $\tilde \K\subset\HP(X)$. Obviously,
$\cl_{\HP(X)}(\conv(\K))=\cl_{\tilde \K}(\conv(\K))$ is a compact subset of $\HP(X)$.
\end{proof}

The question whether the set $\HP(X)$ is barycentric is closely connected with the question whether the functor $\HP$ is monadic (for the definition of the latter see [1, \S 1]).

\begin{theorem}\label{t3.16} The restriction $\HP:\Metr\to\Metr$ of the functor $\HP$ to the category $\Metr$ of metrizable spaces and their continuous maps is a monad.
\end{theorem}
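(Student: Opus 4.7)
The plan is to exhibit the monad structure as the triple $(\HP,\delta,\psi)$, where $\delta_X\colon X\to \HP(X)$ is the Dirac embedding $x\mapsto\delta_x$ and $\psi_X:=b_{\HP(X)}\colon \HP^2(X)\to \HP(X)$ is the barycenter map. To show this is a monad on $\Metr$, I need to verify four things: (a) $\HP$ restricts to an endofunctor of $\Metr$, i.e.\ $\HP(X)$ is metrizable whenever $X$ is; (b) $\psi_X$ lands inside $\HP(X)$ and is continuous; (c) $\delta$ and $\psi$ are natural transformations; (d) the unit and associativity axioms hold.

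Point (a) follows from the metrizability results for $\HP$ established elsewhere in the paper (the liftings to $\mathcal{BM}etr$ in the next section supply a compatible metric on $\HP(X)$). Point (b) is the crux of the theorem, and it is where the work already done pays off: viewing $\HP(X)$ as a bounded convex subset of the locally convex space $C_b^*(X)$, Theorem~\ref{t3.15} guarantees that $\HP(X)$ is barycentric for metrizable $X$, so $\psi_X=b_{\HP(X)}$ indeed maps $\HP^2(X)$ into $\HP(X)$, while Theorem~\ref{t3.2} (applied to the bounded set $\HP(X)$) yields continuity. For naturality of $\delta$, the identity $\HP(f)(\delta_x)=\delta_{f(x)}$ is built into the definition of $\HP$ on morphisms, so $\HP(f)\circ\delta_X=\delta_Y\circ f$. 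For naturality of $\psi$, I would exploit the observation made just before Proposition~\ref{p3.14}: the embedding $\HP(X)\hookrightarrow P(\beta X)$ makes $b_{\HP(X)}$ the restriction of the component $\psi_{\beta X}$ of the standard monad $(P,\delta,\psi)$ on $\Comp$; since the latter is natural and the embeddings are compatible with $\HP(f)$ and $P(\beta f)$, naturality transfers by restriction.

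The unit law $\psi_X\circ\delta_{\HP(X)}=\id_{\HP(X)}$ is immediate from Lemma~\ref{l3.1}, which gives $b_{\HP(X)}(\delta_\mu)=\mu$ for every $\mu\in\HP(X)$. The other unit law $\psi_X\circ \HP(\delta_X)=\id_{\HP(X)}$ and the associativity $\psi_X\circ \HP(\psi_X)=\psi_X\circ \psi_{\HP(X)}$ both reduce, via the same embedding into $P(\beta X)$, to the corresponding identities for the known $P$-monad on $\Comp$, which hold by monadicity of $P$ on compact Hausdorff spaces. The only real obstacle is checking that these restrictions are legitimate — i.e.\ that all the $P$-monad data applied to $\HP$-valued measures produce $\HP$-valued measures — and this is precisely what Theorem~\ref{t3.15} (together with Proposition~\ref{p3.14} and [1,~1.26]) secures. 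Once this closure under restriction is verified, the monad axioms are inherited automatically and the proof is complete.
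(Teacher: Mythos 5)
Your proposal is correct and follows essentially the same route as the paper: the monad structure is the triple $(\HP,\delta,\psi)$ with $\psi_X=b_{\HP(X)}$, the crux is Theorem~\ref{t3.15} (barycentricity of $\HP(X)$ for metrizable $X$) together with the metrizability of $\HP(X)$, and the unit and associativity laws are inherited by restriction from the classical monad $\mathbb P$ on compact spaces via the embedding $\HP(X)\subset P(\beta X)$. The only cosmetic difference is that you source the metrizability of $\HP(X)$ from the metric liftings of the next section, whereas the paper cites the result [1, 2.27] directly; both are legitimate.
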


\begin{proof} The functor $\HP$ can be included into the triple $\hat{\Bbb
P}=(\HP,\delta,\psi)$, where $\delta$ is the Dirac's transform, and the component
$\psi_X:\HP^2(X)\to \HP(X)$ of the multiplication $\psi$ coincides with the barycenter map $b_{\HP(X)}:\HP^2(X)\to \HP(X)$. Here we use Theorems 3.15 and
[1, 2.27], which says that for any metrizable space $X$ the set
$\HP(X)$ is both barycentric and metrizable. The fact that equalities $\psi\circ
T\delta=\psi\circ\delta T=\id_T$ and $\psi\circ \psi T=\psi\circ T\psi$ hold
follows from the corresponding equalities that hold for the monad ${\Bbb P}$.
\end{proof}

Closely connected with the notion of a monad is the notion of an algebra. Let us recall the definition. By a {\em $\mathbb T$-algebra} of a monad ${\Bbb T}=(T,\delta,\psi)$ on the category $\mathcal C$ we understand a pair $(X,\xi)$ consisting of an object $X$ of the category $\mathcal C$ and a morphism $\xi:T(X)\to X$ such that $\xi\circ\delta_X=\id_X$ and
$\xi\circ\psi_X=\xi\circ T(\xi)$. A {\em morphism} of $\Bbb T$-algebras $(X,\xi)$  and
$(X',\xi)$ is a morphism $f:X\to X'$ such that $\xi'\circ
T(f)=f\circ \xi$.

\begin{theorem}\label{t3.17} If $X$ is a convex bounded barycentric metrizable subset of a locally convex space, then the pair $(X,b_X)$
is an algebra of the monad $\HP:\Metr\to\Metr$.
In this case the barycentric map is a morphism of $\HP$-algebras.
\end{theorem}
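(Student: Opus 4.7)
The plan is to verify the two axioms of a $\hat{\mathbb P}$-algebra for the pair $(X,b_X)$, using the fact that $X$ is metrizable so $\HP$ is indeed a monad on the relevant category by Theorem~\ref{t3.16}. First I would note that $b_X:\HP(X)\to X$ is a well-defined morphism in $\Metr$: it lands in $X$ because $X$ is barycentric, it is continuous because $X$ is bounded (Theorem~\ref{t3.2}), and $\HP(X)$ is metrizable by [1,\,2.27].

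The unit axiom $b_X\circ\delta_X=\id_X$ is immediate from Lemma~\ref{l3.1}, since $b_X(\delta_x)=x$ for every $x\in X$. The real content lies in the associativity axiom
$$b_X\circ\psi_X=b_X\circ\HP(b_X):\HP^2(X)\to X,$$
where $\psi_X=b_{\HP(X)}$ by Theorem~\ref{t3.16}. The strategy is to fix $M\in\HP^2(X)$, view both sides as elements of $E\subset E^{**}$, and test equality against an arbitrary $f\in E^*$. The key computational input is the following characterization of $\psi_X(M)\in\HP(X)\subset C_b^*(X)$: for any $h\in C_b(X)$ the evaluation $e_h:\mu\mapsto\mu(h)$ is a continuous linear functional on $C_b^*(X)$, so by the very definition of the barycenter $b_{\HP(X)}(M)$ one has
$$\psi_X(M)(h)=b_{\HP(X)}(M)(e_h)=M(e_h|\HP(X))=\int_{\HP(X)}\mu(h)\,dM(\mu).$$

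Applying this with $h=f|X$ (bounded since $X$ is bounded and $f\in E^*$) computes the left-hand side as
$$b_X(\psi_X(M))(f)=\psi_X(M)(f|X)=\int_{\HP(X)}\mu(f|X)\,dM(\mu)=\int_{\HP(X)}b_X(\mu)(f)\,dM(\mu).$$
For the right-hand side, $\HP(b_X)(M)$ is the pushforward of $M$ by $b_X$, so the change-of-variables formula gives
$$b_X(\HP(b_X)(M))(f)=\HP(b_X)(M)(f|X)=\int_{\HP(X)}f(b_X(\mu))\,dM(\mu)=\int_{\HP(X)}b_X(\mu)(f)\,dM(\mu),$$
matching the previous expression. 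Since $E\hookrightarrow E^{**}$ separates points via $E^*$, equality in $E^{**}$ (hence in $X$) follows.

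The second sentence of the theorem is then essentially a tautology: taking $(\HP(X),\psi_X)$ as the free $\hat{\mathbb P}$-algebra and $(X,b_X)$ as the algebra we have just constructed, the condition that $b_X:\HP(X)\to X$ be a morphism of $\hat{\mathbb P}$-algebras reads $b_X\circ\psi_X=b_X\circ\HP(b_X)$, which is precisely the associativity axiom verified above. The main potential obstacle is the justification of the Fubini-type identity for $\psi_X(M)$; this is not really an obstacle, however, since it follows directly from the definition of the barycenter $b_{\HP(X)}$ applied to the continuous linear functional $e_h\in(C_b^*(X))^*$, with no further measure-theoretic gymnastics required.
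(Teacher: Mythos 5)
Your proof is correct, but the key step --- the associativity axiom $b_X\circ\psi_X=b_X\circ\HP(b_X)$ --- is handled by a genuinely different argument than the paper's. The paper observes that both composites are affine and continuous, checks that they agree on Dirac measures $\delta_\mu\in\HP^2(X)$ (both give $b_X(\mu)$), and concludes by density of the convex hull of Dirac measures in $\HP^2(X)$; the same soft argument is then reused to prove the stronger fact that \emph{every} continuous affine map between barycentric sets is a morphism of $\HP$-algebras, from which the second sentence is deduced. You instead fix an arbitrary $M\in\HP^2(X)$ and test both sides against $f\in E^*$, reducing everything to the Fubini-type identity $\psi_X(M)(h)=\int_{\HP(X)}\mu(h)\,dM(\mu)$, which you correctly derive from the definition of $b_{\HP(X)}$ applied to the evaluation functional $e_h$ (legitimate, since $C_b^*(X)$ carries the weak-$*$ topology, so its dual consists exactly of such evaluations), together with the change-of-variables formula for the pushforward $\HP(b_X)(M)$ and the fact that $E^*$ separates points of $E^{**}$. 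Your computation is pointwise and avoids invoking the density of $P_\omega(\HP(X))$ in $\HP^2(X)$, at the cost of being more explicitly measure-theoretic; the paper's route is softer and yields the more general statement that affine continuous maps between barycentric sets are algebra morphisms, which is what Remark 3.20 later relies on. Your reading of the second sentence --- that $b_X\colon(\HP(X),\psi_X)\to(X,b_X)$ being a morphism of $\HP$-algebras is literally the associativity axiom already verified --- is valid and in fact more economical than the paper's for the statement as written.
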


\begin{proof} The equality $b_X\circ \delta_X=\id_X$ follows form Lemma
3.1. Let us show that $b_X\circ b_{\HP(X)}=b_X\circ\HP(b_X)$. Let us note that the maps $b_X\circ b_{\HP(X)}$ and $b_X\circ \HP(b_X)$ are affine and continuous. Also, if $\delta_\mu\in\HP^2(X)$ is a Dirac measure on
$\HP(X)$, then $b_X\circ b_{\HP(X)}(\delta_\mu)=b_X(\mu)$ and $b_X\circ
\HP(b_X)(\delta(\mu))=b_X(\delta(b_X(\mu))=b_X(\mu)$. As the convex hull of all Dirac measures is dense in $\HP^2(X)$, the sets $b_X\circ
b_{\HP(X)}$ and $b_X\circ \HP(b_X)$ coincide. Thus, $(X,b_X)$
is a $\HP$-algebra.

In order to prove the second statement of the theorem, observe that for any affine continuous map $f:X\to Y$ between barycentric subsets of locally convex spaces, the maps $b_Y\circ
\HP(f)$ and $f\circ b_X$ are continuous and affine. Taking into account that $b_Y\circ
\HP(f)(\delta _x)=b_Y(\delta _{f(x)})=f(x)=f\circ \delta _X(\delta _x)$
for any $x\in X$ and the convex hull of all Dirac measures is dense in
$\HP(X)$, we conclude that $b_Y\circ \HP(f)=f\circ b_X$, i.e. $f$ is a morphism of
$\HP$-algebras $(X,b_X)$ and $(Y,b_Y)$. The theorem is proved.
\end{proof}

It is known [10], [12] that the category of $\Bbb P$-algebras is isomorphic to the category ${\mathcal C}onv$ of convex compacta that lie in locally convex spaces, and their continuous affine maps.

\begin{problem}\label{q3.18} Is the category of algebras of the functor $\HP$ isomorphic to the category whose objects are convex bounded barycentric sets in locally convex spaces, and whose morphisms are their affine continuous maps.
\end{problem}

In light of the monadicity of the functor $\Pt:\Tych\to\Tych$, the following problem arises naturally:

\begin{problem}\label{q3.19} Describe the category of algebras of the monad
$\Pt:\Tych\to\Tych$.
\end{problem}

\begin{remark}\label{r3.20} Let us note that Theorem 3.20 implies that an algebra of the monad $\Pt$ is any pair $(X,b_X)$, where $X$ is a convex bounded barycentric subset of a locally convex space such that $\HP(X)=\Pt(X)$. In addition, affine continuous maps between such sets are morphisms of $\Pt$-algebras.
\end{remark}

\section{Lifting the functors $\Pt, \HP$ to categories of metric and uniform spaces}

In this section, for every bounded (pseudo)metric $d$ on a Tychonoff space $X$, we will construct a (pseudo)metric
$d_\tau$ on $\Pt(X)$ that extends the (pseudo)metric $d$. This will allow us to lift the functors $\Pt$ and $\HP$ to the category of bounded metric spaces, and also to the category of uniform spaces. All constructions will be made first for the functor $\HP$. Then, using the facts that the functor $\Pt$ preserves embeddings, and that $\Pt(X)=\HP(X)$ for any complete metric space, we will generalize the obtained results to the functor $\Pt$.

Let $d$ be a bounded pseudometric on a Tychonoff space $X$. For Radon measures $\mu, \eta \in \HP(X)$ let $$\hat
d(\mu,\eta)=\inf\{\lambda(d)\colon \lambda \in
\HP(X \times X), \;\HP(pr_1)(\lambda)=\mu, \;\HP(pr_2)(\lambda)=\eta\},$$ where
$pr_i:X\times X\to X$, $i=1,2,$ is the projection onto the $i$-th factor. Let us note that for any measures $\mu,\eta \in \HP(X)$ a measure $\lambda \in \HP(X\times X)$ with
$\HP(\pr_1)(\lambda)=\mu$ and $\HP(\pr_2)(\lambda)=\eta$ always exists. Indeed, for $\lambda$ we can  take the tensor product $\mu\otimes \eta$
of the measures $\mu$ and $\eta$ (see [1, 2.21 and  2.22]).

Formally, the metric $\hat d$ was introduced by L.V. Kantorovitch in [13] and for compact spaces was studied  by V.V. Fedorchuk [14]. Very recently, Yu.V. Sadovnichiy has shown [3] that for a metric space $(X,d)$, the metric $\hat d$ induces the subspace topology of the subspace $P_\beta(X)\subset \HP(X)$, consisting of measures with compact supports.

The following proposition is analogous to [14, \S4, Lemma 5].

\begin{lemma}\label{l4.1} For any Radon measures $\mu,\eta\in\HP(X)$, 
there exists a measure $\lambda\in\HP(X\times X)$ with
$\HP(pr_1)(\lambda)=\mu,\;\HP(pr_2)(\lambda)=\eta$ and $\hat
d(\mu,\eta)=\lambda(d).$
\end{lemma}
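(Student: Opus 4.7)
The plan is to show that the infimum in the definition of $\hat d(\mu,\eta)$ is attained, by running a standard compactness-plus-continuity argument on the convex set of couplings $\Lambda(\mu,\eta)=\{\lambda\in\HP(X\times X)\colon \HP(\pr_1)(\lambda)=\mu,\ \HP(\pr_2)(\lambda)=\eta\}$. First I would pick a minimizing sequence $(\lambda_n)_{n\in\w}\subset\Lambda(\mu,\eta)$ with $\lambda_n(d)\to\hat d(\mu,\eta)$; then extract a weak-$*$ limit point $\lambda\in\Lambda(\mu,\eta)$ via a Prokhorov-type compactness step; and finally use that the bounded continuous function $d$ on $X\times X$ is weak-$*$ continuous against Radon probability measures to conclude $\lambda(d)=\hat d(\mu,\eta)$.

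The key steps, in order, would be the following. \emph{Uniform tightness.} Given $\e>0$, the Radon property of $\mu$ and $\eta$ yields compacta $K_1,K_2\subset X$ with $\mu(X\bs K_1)<\e/2$ and $\eta(X\bs K_2)<\e/2$, and since $(K_1\times K_2)^c=(K_1^c\times X)\cup(X\times K_2^c)$, every coupling $\lambda$ of $\mu$ and $\eta$ satisfies
$$\lambda\bigl((X\times X)\bs(K_1\times K_2)\bigr)\le\mu(X\bs K_1)+\eta(X\bs K_2)<\e,$$
so the compactum $K_1\times K_2$ witnesses tightness uniformly on the whole family $\Lambda(\mu,\eta)$. \emph{Compactness.} Viewing each $\lambda_n$ as a Radon measure on a compactification $c(X\times X)$, I would extract a subnet $\lambda_{n_\alpha}$ converging weakly-$*$ in the compact space $P(c(X\times X))$ to some $\lambda$; the $[1,~1.19]$-style closedness of sets of the form $\{\nu\colon \nu(K)\ge 1-\e\}$ together with the uniform tightness just proved forces $\lambda$ to be concentrated on $X\times X$, so $\lambda\in\HP(X\times X)$. \emph{Marginals and integral.} Continuity of $\HP(\pr_i)$ (from~[1]) gives $\HP(\pr_1)(\lambda)=\mu$ and $\HP(\pr_2)(\lambda)=\eta$, so $\lambda\in\Lambda(\mu,\eta)$; weak-$*$ convergence against the bounded continuous function $d$ then yields $\lambda(d)=\lim_\alpha\lambda_{n_\alpha}(d)=\hat d(\mu,\eta)$.

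The main obstacle will be the compactness step: since $X\times X$ is only Tychonoff, $\HP(X\times X)$ itself is not compact in weak-$*$, and one must realize the limit inside a compactification and then certify, via tightness, that the limit has no mass on the remainder. This mechanism is not new in the paper --- it mirrors the construction of the convex compactum $\tilde\K=\bigcap_{n=1}^\infty\K_n\subset\HP(X)$ employed in the proof of Theorem~3.15 --- so the obstacle is structural rather than essentially novel. The remaining ingredients, namely continuity of the marginal maps and of integration against a bounded continuous function under weak-$*$ convergence, are standard properties already exploited repeatedly earlier in the text.
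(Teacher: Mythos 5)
Your proposal is correct and follows essentially the same route as the paper's proof: a minimizing sequence, an accumulation point in a compactification of the product, the tightness estimate $\lambda((X\times X)\bs(K_1\times K_2))\le\mu(X\bs K_1)+\eta(X\bs K_2)$ to certify $\lambda\in\HP(X\times X)$, and continuity of the marginal maps and of $\nu\mapsto\nu(d)$ to finish. The only cosmetic difference is that the paper compactifies as $cX\times cX$ rather than via an abstract $c(X\times X)$, which makes the extension of the projections automatic.
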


\begin{proof} Let $\{\lambda_n\}^\infty_{n=1}\subset\HP(X\times
X)$ be a sequence of Radon measures such that
$\HP(pr_1)(\lambda_n)=\mu,\;\HP(pr_2)(\lambda_n)=\eta,\;n\in\IN$, and
$\lim_{n\to\infty}\lambda_n(d)=\hat d(\mu,\eta).$ Let $cX$ be a compactification of the space $X$. Since the functor $\HP$ preserves embeddings, we can assume that
$\HP(X)\subset P(cX)$ and $\HP(X\times X)\subset P(cX\times cX)$. In addition,
$P(pr_1)(\lambda_n)=\mu$ and $P(pr_2)(\lambda_n)=\eta,\;n\in\IN$ (here
$pr_i:cX\times cX\to cX,\;i=1,2,$ is the projection onto the $i$-th factor).
As $P(cX\times cX)$ is a compact, there exists an accumulation point
$\lambda\in P(cX\times cX)$ of the sequence
$\{\lambda_n\}^\infty_{n=1}$.
Since the maps
$P(pr_i):P(cX\times cX)\to P(cX),\;i=1,2,$ are continuous,
$P(pr_1)(\lambda)=\mu$ and $P(pr_2)(\lambda)=\eta$. Let us show that
$\lambda\in\HP(X\times X)\subset P(cX\times cX)$. For this purpose, fix
$\varepsilon>0$  and find two compacts $K_1\subset X$ and $K_2\subset X$ such that
$\mu(K_1)>1-\frac\e2$ and $\eta(K_2)>1-\frac\e2$. Then $\lambda((cX\times
cX)\bs(K_1\times K_2))=\lambda\big(((cX\bs K_1)\times cX)\cap (cX\times(cX\bs
K_2))\big)\le\lambda((cX\bs K_1)\times cX)+\lambda(cX\times(cX\bs
K_2))=\mu(cX\bs K_1)+\eta(cX\bs K_2))<\frac\e2+\frac\e2=\e$. Thus, $\lambda(K_1\times K_2)>1-\e$, i.e. $\lambda\in\HP(X\times X)$.
By definition of the topology on $\HP(X\times X)$, the function $D:\HP(X\times X)\to \IR$, $D:\nu\mapsto\nu(d)$, is continuous. Since
$\lambda\in\HP(X\times X)$ is a limit point of the sequence
$\{\lambda_n\}^\infty_{n=1}\subset\HP(X\times X)$ and
$\lim_{n\to\infty}\lambda_n(d)=\lim_{n\to\infty}D(\lambda_n)=\hat
d(\mu,\eta)$,  $D(\lambda)=\lambda(d)=\hat d(\mu,\eta)$. 
\end{proof}

\begin{lemma}\label{l4.2} For any bounded continuous (pseudo)metric $d$
on a Tychonoff space $X$, the function $\hat d:\HP(X)\times\HP(X)\to\IR$
is a continuous (pseudo)metric on $\HP(X)$. Moreover,
$\diam(X,d)=\diam(\HP(X),\hat d)$.
\end{lemma}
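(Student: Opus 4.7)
The plan is to verify four things in order: the pseudometric axioms for $\hat d$, point separation when $d$ is a metric, continuity of $\hat d$ on $\HP(X)\times\HP(X)$, and the diameter formula. For each step I exploit Lemma 4.1 (existence of optimal couplings) and the functoriality of $\HP$ applied to natural maps $X\times X\to X\times X$.

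For the pseudometric axioms, symmetry follows by pushing any coupling $\lambda$ of $(\mu,\eta)$ along the flip $\sigma(x,y)=(y,x)$, producing a coupling of $(\eta,\mu)$ with the same $d$-integral since $d\circ\sigma=d$. Vanishing on the diagonal is witnessed by the coupling $(\id\times\id)_*\mu$, concentrated on the diagonal where $d=0$. For the triangle inequality, given $\mu_1,\mu_2,\mu_3$ with optimal couplings $\lambda_{12},\lambda_{23}$ from Lemma 4.1, I would glue them along the common marginal $\mu_2$ to produce a Radon measure $\Lambda\in\HP(X^3)$ with $(\pr_{12})_*\Lambda=\lambda_{12}$ and $(\pr_{23})_*\Lambda=\lambda_{23}$; this is carried out inside a compactification $cX$, where disintegration of probability measures on $cX\times cX$ is available, with Radonness of $\Lambda$ verified by the compact-approximation argument used in Lemma 4.1. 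The projection $\lambda_{13}=(\pr_{13})_*\Lambda$ then couples $\mu_1,\mu_3$, and
$$\lambda_{13}(d)\le\int_{X^3}\bigl(d(x_1,x_2)+d(x_2,x_3)\bigr)\,d\Lambda=\lambda_{12}(d)+\lambda_{23}(d)$$
yields the triangle inequality. When $d$ is a metric, $\hat d(\mu,\eta)=0$ forces the optimal coupling $\lambda$ from Lemma 4.1 to sit on the closed diagonal $\{d=0\}$, whence $\mu=(\pr_1)_*\lambda=(\pr_2)_*\lambda=\eta$.

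For continuity of $\hat d$, suppose $(\mu_n,\eta_n)\to(\mu_0,\eta_0)$ and pick optimal couplings $\lambda_n$ via Lemma 4.1. Viewing $\{\lambda_n\}$ inside $P(cX\times cX)$ and repeating the tightness argument of Lemma 4.1, any weak-$*$ accumulation point $\lambda^*$ lies in $\HP(X\times X)$ with marginals $\mu_0,\eta_0$, and continuity of $\nu\mapsto\nu(d)$ on $\HP(X\times X)$ gives $\lambda^*(d)=\lim\lambda_n(d)$ along the chosen subsequence. Since $\lambda^*$ is a coupling, this delivers $\hat d(\mu_0,\eta_0)\le\liminf_n\hat d(\mu_n,\eta_n)$. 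For the reverse inequality $\limsup_n\hat d(\mu_n,\eta_n)\le\hat d(\mu_0,\eta_0)$, I would fix an optimal $\lambda_0$ for $(\mu_0,\eta_0)$ and construct near-optimal couplings of $(\mu_n,\eta_n)$ either by a perturbative gluing with small transports $\mu_0\to\mu_n$ and $\eta_0\to\eta_n$, or by invoking Kantorovich--Rubinstein duality $\hat d(\mu,\eta)=\sup\{\mu(f)-\eta(f):f\mbox{ bounded and 1-Lipschitz in }d\}$, which expresses $\hat d$ as a supremum of jointly continuous functionals.

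Finally, for the diameter formula: $\hat d(\delta_x,\delta_y)=d(x,y)$ because $\delta_{(x,y)}$ is the only coupling of $\delta_x$ and $\delta_y$, giving $\diam(X,d)\le\diam(\HP(X),\hat d)$; and for any $\mu,\eta\in\HP(X)$ the product coupling $\mu\otimes\eta\in\HP(X\times X)$ from [1, 2.21--2.22] satisfies $\hat d(\mu,\eta)\le(\mu\otimes\eta)(d)\le\diam(X,d)$ since $d\le\diam(X,d)$ pointwise. The main obstacle is the upper semi-continuity step inside the continuity argument; the gluing construction for the triangle inequality is also technically delicate in the Tychonoff setting but tractable by working inside the compactification $cX$.
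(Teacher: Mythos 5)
Your treatment of the pseudometric axioms, the separation property, and the diameter formula is fine (the paper itself only cites Fedorchuk for the axioms, and your gluing-in-$cX$ sketch is the standard route), but the continuity argument --- which is the real content of this lemma --- has a genuine gap exactly where you flag ``the main obstacle.'' Because $\hat d$ satisfies the triangle inequality, joint continuity reduces to showing that for each $\mu_0$ and $\e>0$ there is a \emph{weak-$*$ open} neighborhood $U\ni\mu_0$ in $\HP(X)$ with $\hat d(\mu,\mu_0)<\e$ for all $\mu\in U$; this is the upper-semicontinuity direction, and neither of your two proposed routes delivers it. The ``perturbative gluing with small transports $\mu_0\to\mu_n$'' is circular: to know those transports are small in the $\hat d$ sense is precisely to know that weak-$*$ closeness implies $\hat d$-closeness, which is the statement being proved. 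Kantorovich--Rubinstein duality (even granting its validity for Radon measures on an arbitrary Tychonoff space, which would itself need an argument) writes $\hat d$ as a supremum of weak-$*$ continuous functionals and therefore yields only \emph{lower} semicontinuity, not the upper bound you need. Your liminf argument via accumulation points of optimal couplings has a further flaw: the tightness argument of Lemma 4.1 uses that the marginals are \emph{fixed} Radon measures, whereas for varying $\mu_n\to\mu_0$ one would need uniform tightness of $\{\mu_n\}$, which is not available on a general Tychonoff space; in any case that half is redundant once the triangle inequality and continuity at the diagonal are in hand.

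The missing idea, which is what the paper's proof supplies, is an \emph{explicit} near-optimal coupling of $\mu_0$ with every $\mu$ in a suitable weak-$*$ neighborhood. Concretely: take a compact $K\subset X$ with $\mu_0(K)>1-\e/(8D)$ (where $D=\diam(X,d)+1$), cover $K$ by finitely many open $d$-balls of radius $\e/4$, refine them to pairwise disjoint open sets $V_1,\dots,V_n$ capturing all but $\e/(4D)$ of the mass of $\mu_0$, and let $U$ be the neighborhood of $\mu_0$ determined by the conditions $\mu(V_i)>\mu_0(V_i)-\e/(4Dn)$ (open by [1, 1.19]). For $\mu\in U$ one builds $\lambda=\sum_{i,j}\alpha_{ij}(\mu_0|V_i)\times(\mu|V_j)$ with the coefficients chosen so that each diagonal block $V_i\times V_i$ receives mass $\min\{\mu_0(V_i),\mu(V_i)\}$; then almost all of $\lambda$ sits where $d<\e/2$, giving $\lambda(d)<\e$ and hence $\hat d(\mu,\mu_0)<\e$. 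Without some construction of this kind the continuity claim is unproved.
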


\begin{proof}
Similarly to Lemma 6 from [14, \S 4], it can be proved that the function
$\hat d:\HP(X)\times \HP(X)\to\IR$ is a bounded (pseudo) metric
on $\HP(X)$ for any bounded continuous (pseudo) metric $d$ on $X$.

Let us check that the pseudometric $\hat d$ on $\HP(X)$ is continuous. For that purpose,
fix a measure $\mu_0\in\HP(X)$ and $\e>0$, and let
$D=\diam(X,d)+1=\sup\{d(x,x')\mid x,x'\in X\}+1$.
Let us fix a compact subset $K\subset X$ such that
$\mu_0(K)>1-\frac\e{8D}$. Let $\mathcal U=\{U_1,\dots,U_n\}$ be a finite cover of the compact
$K$ by open $\frac\e4$-balls
(with respect to the pseudometric $d$).
Since the measure $\mu_0$
is countable additive, there exists  an open set $V_1\subset \bar
V_1=\cl_X(V_1)\subset U_1$ such that $\mu_0(V_1)>\mu_0(U_1)-\frac\e{8Dn}$. By induction, construct open sets $V_k\subset X,\;1<k\le n$ such that
$V_k\subset\bar V_k\subset U_k\bs(\bigcup_{i<k}\bar V_i)$ and
$\mu(V_k)>\mu_0(U_k\bs\bigcup_{i<k}U_i)-\frac\e{8Dn}$. Let us note that the sets
$V_1,\dots,V_n$ are disjoint. Apart from that,
$\mu_0(V_1)+\mu_0(V_2)+\dots+\mu_0(V_n)>\mu_0(U_1)+\mu_0(U_2\bs
U_1)+\dots+\mu_0(U_n\bs\bigcup_{i<n}U_i)-\frac\e{8D}=\mu_0(\bigcup_{i\le
n}U_i)-\frac\e{8D}>1-\frac\e{8D}-\frac\e{8D}=1-\frac\e{4D}$. By [1, 1.19],
the set $U=\{\mu\in\HP(X)\mid\mu(V_i)>\mu_0(V_i)-\frac\e{4Dn},\;1\le
i\le n\}$ is an open neighborhood of the measure $\mu_0$.

We will show that $\hat d(\mu,\mu_0)<\e$ for any measure $\mu\in U$.
 Let $V_0=X\bs(\bigcup^n_{i=1}V_i)$. For
a Borel subset $B\subset X$ let $\mu|B$ be the measure on $X$ defined by 
$\mu|B(A)=\mu(B\cap A)$ for any Borel
set $A\subset X$. Let us consider the measure
$\lambda=\sum_{i,j=0}^n\alpha_{ij}\cdot(\mu_0|V_i)\times(\mu|V_j)$ on
$X\times X$, where coefficients $\alpha_{ij}\ge0$ are chosen to satisfy the following conditions for every $0\le i,j\le n$:
$$\sum_{j=0}^n\alpha_{ij}\mu(V_j)=1,\;\sum_{i=0}^n\alpha_{ij}\mu_0(V_i)=1\mbox{ \ and \ } \alpha_{ii}\mu_0(V_i)\mu(V_i)=\min\{\mu_0(V_i),\mu(V_i)\}.$$ One can easily check that $\lambda$ is a Radon measure on $X\times
X,\;\HP(pr_1)(\lambda)=\mu_0$ и $\HP(pr_2)(\lambda)=\mu$. Let us show that $\lambda(d)=\int_{X\times X\bs\bigcup_{i=1}^nV_i\times
V_i}d\;d\lambda+\int_{\bigcup_{i=1}^nV_i\times V_i}d\;d\lambda<\e$. Let us note that
$$\lambda(\bigcup_{i=1}^nV_i\times
V_i)=\sum_{i=1}^n\min\{\mu_0(V_i),\mu(V_i)\}>\sum_{i=1}^n(\mu_0(V_i)-
\frac\e{4Dn})>1-\frac\e{4D}-\frac\e{4D}=1-\frac\e{2D}.$$ Since
every set $V_i$ is contained in some $\frac\e4$-ball, for any $(x,y)\in\bigcup_{i=1}^nV_i\times V_i$ we get $d(x,y)<\frac\e2$. Then
$\lambda(d)<\frac\e{2D}D+(1-\frac\e{2D})\frac\e2<\e$. This implies that
$\hat d(\mu,\mu_0)<\e$. Thus, $\hat d$ is a continuous
pseudometric on $\HP(X)$.

The fact that $\diam(X,d)=\diam(\HP(X),\hat d)$ can be easily deduced from the definition of the pseudometric $\hat d$. 
\end{proof}

Let us recall that a metric on a topological space $X$ is called {\em compatible} if it generates  the initial topology on $X$.

\begin{lemma}\label{l4.3} If $d$ is a compatible bounded metric on $X$ then $\hat d$ is a compatible metric on $\HP(X)$.
\end{lemma}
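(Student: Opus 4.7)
Lemma 4.2 already shows that $\hat d$ is a continuous pseudometric on $\HP(X)$, which is equivalent to saying that every $\hat d$-ball is open in the topology of $\HP(X)$: in other words the $\hat d$-topology is already coarser than the original. So the substance of Lemma 4.3 reduces to two points: (a) $\hat d$ separates points of $\HP(X)$, and (b) every basic neighborhood of a measure $\mu_0\in\HP(X)$ in the $\HP$-topology contains some $\hat d$-ball around $\mu_0$.

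For (a) the key estimate is
$|\mu(f)-\eta(f)|\le\operatorname{Lip}(f)\cdot\hat d(\mu,\eta)$
for every bounded $d$-Lipschitz $f\colon X\to\IR$. By Lemma~4.1 I pick a Radon measure $\lambda$ on $X\times X$ with marginals $\mu,\eta$ and $\lambda(d)=\hat d(\mu,\eta)$, and then $|\mu(f)-\eta(f)|=|\int(f(x)-f(y))\,d\lambda|\le \operatorname{Lip}(f)\int d\,d\lambda$. Because $d$ is compatible, the bounded $d$-Lipschitz functions on $X$ (for instance $x\mapsto\min(1,d(x,A))$ for closed $A\subset X$) separate Radon probability measures, so $\hat d(\mu,\eta)=0$ forces $\mu=\eta$.

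For (b), fix a basic neighborhood $V=\{\mu:|\mu(f_i)-\mu_0(f_i)|<\e,\ 1\le i\le k\}$ with $f_i\in C_b(X)$. Since $\mu_0$ is Radon, I first choose a compact $K\subset X$ with $\mu_0(K)$ close to $1$. On the compactum $K$ each $f_i$ is uniformly $d$-continuous, so I approximate $f_i|_K$ uniformly by a $d$-Lipschitz function $h_i$ on $K$ and extend it to $\tilde h_i\colon X\to\IR$ via McShane, keeping the same Lipschitz constant $L_i$ and truncating so that $\|\tilde h_i\|_\infty\le\|f_i\|_\infty$. Splitting $|\mu(f_i)-\mu_0(f_i)|$ through $\tilde h_i$, the middle term $|\mu(\tilde h_i)-\mu_0(\tilde h_i)|$ is controlled by $L_i\hat d(\mu,\mu_0)$ thanks to the inequality in (a).

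The main obstacle is the two outer terms $|\mu(f_i)-\mu(\tilde h_i)|$ and $|\mu_0(f_i)-\mu_0(\tilde h_i)|$: bounding them requires both $\mu_0$ and, more delicately, $\mu$ to place nearly all of their mass close to $K$. This ``tightness transfer'' is the crucial step. Using Lemma~4.1 I fix an optimal coupling $\lambda\in\HP(X\times X)$ of $\mu_0$ and $\mu$ with $\lambda(d)=\hat d(\mu,\mu_0)$, and apply Markov's inequality: $\lambda(\{d(x,y)\ge\eta\})\le\hat d(\mu,\mu_0)/\eta$. Combining with $\lambda(K\times X)=\mu_0(K)$ yields
$\mu(U_\eta(K))\ge\mu_0(K)-\hat d(\mu,\mu_0)/\eta$
for any $\eta>0$, where $U_\eta(K)$ is the open $\eta$-neighborhood of $K$. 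Choosing first $\mu_0(K)$ close to $1$, then $\eta$ small enough that $L_i\eta$ is negligible and that $|f_i-\tilde h_i|$ is small on $U_\eta(K)$ (by continuity of $f_i$ on $K$), and finally demanding $\hat d(\mu,\mu_0)$ smaller than a threshold depending on $\e$, the $L_i$, the $\|f_i\|_\infty$, and $\eta$, places $\mu$ in $V$. Hence $\hat d$ is a compatible metric on $\HP(X)$.
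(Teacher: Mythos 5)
Your proof is correct, but it takes a genuinely different route from the paper's. The paper disposes of the compatibility question by citing Varadarajan's theorem (Theorem 4 of [11, II]): for a metric space $(X,d)$ the sets $\{\mu:|\mu(f)-\mu_0(f)|<1\}$ with $f$ \emph{uniformly} continuous and bounded already form a subbase of neighborhoods of $\mu_0$ in $\HP(X)$. With that reduction a single application of Lemma 4.1 plus the Markov-type bound $\lambda(\{d\ge\delta\})\le\lambda(d)/\delta$ yields $|\mu(f)-\mu_0(f)|\le\frac12+2M\delta<1$, and the proof is a few lines. You instead work with arbitrary $f_i\in C_b(X)$, which forces you to (i) localize to a compact $K$ of large $\mu_0$-measure, (ii) replace $f_i$ near $K$ by a Lipschitz function via uniform approximation on $K$ and a McShane extension, and (iii) prove the tightness-transfer estimate $\mu(U_\eta(K))\ge\mu_0(K)-\hat d(\mu,\mu_0)/\eta$ --- which is in substance the paper's Lemma 4.5, proved there later by the same coupling-plus-Markov argument. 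All three steps are sound; step (ii) needs, and you implicitly use, the standard fact that a function continuous on $X$ is ``uniformly continuous on an $\eta$-neighborhood of a compactum,'' which holds by a finite-cover argument. What your version buys is self-containedness: it needs only the inner regularity of Radon measures, not Varadarajan's subbase theorem. What the paper's version buys is brevity, since the uniformly continuous functions are exactly the natural test class for $\hat d$. Your separate verification in (a) that $\hat d$ separates points duplicates work the paper delegates to Lemma 4.2 (which cites [14]), but it is a correct and clean Lipschitz-duality argument.
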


\begin{proof}
Let $(X,d)$ be a bounded metric space. Lemma 4.2 implies that $\hat d$ is a continuous metric on $\HP(X)$. Let us show that the metric $\hat d$ induces the topology on $\HP(X)$. By 
Theorem 4 [11,II], the sets of the form $\{\mu\in\HP(X):|\mu(f)-\mu_0(f)|<1\}$, where $f$ runs over uniformly continuous bounded functions on $(X,d)$, constitute a subbase of open neighborhoods of the measure $\mu_0$. Let $f:(X,d)\to\IR$ be a uniformly continuous bounded function on $X$. Let us show that there exists an $\e>0$, such that for any measure $\mu\in\HP(X)$ if $\hat d(\mu,\mu_0)<\e$, then
$|\mu(f)-\mu_0(f)|<1$. Let $M=\sup\{f(x)\mid x\in X\}+1$. By the uniform continuity of $f$, there exists a
$0<\delta<\frac1{4M}$ such that $|f(x)-f(y)|<\frac12$ for any for any $x,y\in X$ with
$d(x,y)<\delta$. Let $\e=\delta^2$. We claim that  $|\mu(f)-\mu_0(f)|<1$ for any measure $\mu\in\HP(X)$ with $\hat d(\mu,\mu_0)<\e$. Lemma 4.1 implies the existence of a measure $\lambda\in\HP(X\times X)$ such that
$\HP(pr_1)(\lambda)=\mu$, $\HP(pr_2)(\lambda)=\mu_0$ and $\lambda(d)<\e$.
This implies that $\lambda(A)\le\delta$
for the set $A=\{(x,y)\in X\times X\mid
d(x,y)\ge\delta\}$. In this case
$|\mu(f)-\mu_0(f)|=|\int_{X\times X}f(x)d\lambda-\int_{X\times
X}f(y)d\lambda|\le\int_{X\times
X}|f(x)-f(y)|d\lambda\le\int_A|f(x)-f(y)|d\lambda+\int_{(X\times X)\bs
A}|f(x)-f(y)|d\lambda\le\frac12+2M\,\delta<1$. Thus, the topology on
$\HP(X)$, induced by the metric $\hat d$ coincides with the initial topology and the lemma is proved.
\end{proof}

Let $\mathcal C$ be a category and $U:\mathcal C\to\Tych$ be a "forgetful" functor. We say that a functor $F:\Tych\to\Tych$ can be lifted to the category $\mathcal C$ if there exists a functor $\tilde F:\mathcal C\to\mathcal
C$ such that $U\circ\tilde F=F\circ U$. Lemmas 4.2, 4.3 imply

\begin{theorem}\label{t4.4} The functor $\HP$ can be lifted to the category ${\mathcal {BM}}etr$
of bounded metric spaces and their continuous maps.
\end{theorem}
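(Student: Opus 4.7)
The plan is to define the lift $\tilde{\HP}\colon\mathcal{BM}etr\to\mathcal{BM}etr$ on objects by $\tilde{\HP}(X,d)=(\HP(X),\hat d)$, using the Kantorovich-type pseudometric $\hat d$ constructed just before Lemma 4.1. On morphisms, since the arrows of $\mathcal{BM}etr$ are merely continuous maps (no metric compatibility is demanded), I set $\tilde{\HP}(f)=\HP(f)$ for every continuous $f\colon(X,d_X)\to(Y,d_Y)$. The required lifting identity then reads $U\circ\tilde{\HP}=\HP\circ U$, which on objects amounts to the statement that $\hat d$ induces the already-established topology of $\HP(X)$, and on morphisms is tautological.

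To verify that $\tilde{\HP}$ lands in $\mathcal{BM}etr$ on objects I invoke the previous three lemmas. Lemma~4.2 supplies a continuous pseudometric $\hat d$ on $\HP(X)$ with $\diam(\HP(X),\hat d)=\diam(X,d)<\infty$, so $\hat d$ is bounded. Lemma~4.3 upgrades $\hat d$ from a pseudometric to a genuine compatible metric whenever $d$ itself is a compatible bounded metric on $X$. Hence $(\HP(X),\hat d)$ is an object of $\mathcal{BM}etr$.

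For morphisms, if $f\colon X\to Y$ is continuous between bounded metric spaces, then by functoriality of $\HP$ on $\Tych$ the map $\HP(f)\colon\HP(X)\to\HP(Y)$ is continuous with respect to the canonical topologies of $\HP(X)$ and $\HP(Y)$. By Lemma~4.3 these canonical topologies are exactly the topologies generated by $\hat{d_X}$ and $\hat{d_Y}$, so $\HP(f)$ is also continuous as a map of metric spaces, i.e. a morphism of $\mathcal{BM}etr$. Preservation of identities and composition, $\tilde{\HP}(\id_X)=\id_{\HP(X)}$ and $\tilde{\HP}(g\circ f)=\tilde{\HP}(g)\circ\tilde{\HP}(f)$, follows directly from the corresponding identities for $\HP$ on $\Tych$.

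Finally I check $U\circ\tilde{\HP}=\HP\circ U$. On an object $(X,d)$, the left side is the underlying topological space of $(\HP(X),\hat d)$, which by Lemma~4.3 carries precisely the topology of $\HP(X)$; the right side is $\HP(X)$ with the same topology. On a morphism $f$ both sides are equal to $\HP(f)$ by definition. I do not anticipate any genuine obstacle here: the substantive content was packaged into Lemmas 4.1–4.3, and the present theorem is essentially their bookkeeping corollary, the only delicate point being the distinction between pseudometric and metric which is resolved by Lemma~4.3.
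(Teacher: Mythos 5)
Your proposal is correct and takes essentially the same approach as the paper: Theorem~4.4 is stated there as an immediate consequence of Lemmas~4.2 and~4.3, with the lift given on objects by $(X,d)\mapsto(\HP(X),\hat d)$ and on morphisms by $f\mapsto\HP(f)$, exactly as you describe. You merely make explicit the bookkeeping (boundedness of $\hat d$ via the diameter equality, compatibility via Lemma~4.3, functoriality inherited from $\HP$ on $\Tych$) that the paper leaves implicit.
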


\begin{lemma}\label{l4.5} Let $K$ be a closed subset of a bounded metric space $(X,d)$, $\e>0$ and $\delta>0$. Then, for any measures $\mu,\eta\in\HP(X)$ with $\hat
d(\mu,\eta)\le\frac\e2\,\delta$ and $\mu(K)\ge1-\frac\e2$ we get
$\eta(O_\delta(K))\ge1-\e$, where $O_\delta(K)=\{x\in X\colon
d(x,K)\le\delta\}$ is a $\delta$-neighborhood of the set $K$.
\end{lemma}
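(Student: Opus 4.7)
The plan is to reduce everything to a single optimal coupling of $\mu$ and $\eta$ and then apply a Markov-type estimate. By Lemma 4.1 applied to the bounded continuous pseudometric $d$, we may pick a measure $\lambda\in\HP(X\times X)$ realizing the infimum in the definition of $\hat d$, namely $\HP(\pr_1)(\lambda)=\mu$, $\HP(\pr_2)(\lambda)=\eta$, and $\lambda(d)=\hat d(\mu,\eta)\le\tfrac\e2\delta$.

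Next, I would observe the two key set inclusions in $X\times X$. Let $A=\{(x,y)\in X\times X:d(x,y)>\delta\}$; this set is Borel (it is open since $d$ is continuous). By Markov's inequality applied to the nonnegative integrable function $d$ on $(X\times X,\lambda)$, we get
$$
\lambda(A)\le\frac{\lambda(d)}{\delta}\le\frac{(\e/2)\delta}{\delta}=\frac\e2.
$$
On the other hand, $(K\times X)\setminus A\subset K\times O_\delta(K)$, because if $x\in K$ and $d(x,y)\le\delta$ then $d(y,K)\le\delta$, i.e., $y\in O_\delta(K)$.

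Combining these, and using $\lambda(K\times X)=\mu(K)\ge1-\tfrac\e2$, I would estimate
$$
\eta(O_\delta(K))=\lambda(X\times O_\delta(K))\ge\lambda(K\times O_\delta(K))\ge\lambda(K\times X)-\lambda(A)\ge\Big(1-\frac\e2\Big)-\frac\e2=1-\e,
$$
which is the desired inequality.

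There is no real obstacle here: the only subtle point is making sure the relevant sets ($A$, $K\times O_\delta(K)$, $X\times O_\delta(K)$) are Borel so that the measures above are defined, which follows from the continuity of $d$ and the fact that $K$ is closed (hence $O_\delta(K)$ is closed as well). Everything else is a one-line application of Markov's inequality combined with the elementary containment $(K\times X)\cap\{d\le\delta\}\subset K\times O_\delta(K)$.
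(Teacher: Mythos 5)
Your proof is correct and follows essentially the same route as the paper: both arguments take an optimal coupling $\lambda$ from Lemma 4.1, bound the $\lambda$-measure of $\{d>\delta\}$ by Markov's inequality and the $\lambda$-measure of $\pr_1^{-1}(X\setminus K)$ by the marginal condition, and conclude via the containment of $(K\times X)\cap\{d\le\delta\}$ in $K\times O_\delta(K)$. The set you subtract, $(K\times X)\setminus A$, is exactly the set $A$ used in the paper's proof, so the two arguments coincide up to notation.
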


\begin{proof} Let $\mu,\eta\in\HP(X)$ be measures such that $\hat
d(\mu,\eta)\le\frac\e2\,\delta$ и $\mu(K)\ge1-\frac\e2$. By Lemma 4.1,
there exists a measure $\lambda\in\HP(X\times X)$ such that
$\HP(pr_1)(\lambda)=\mu,\;\HP(pr_2)(\lambda)=\eta$ and $\lambda(d)=\hat
d(\mu,\eta)$. Let $A=\{(x,y)\in X\times X\colon d(x,y)\le\delta,\;x\in
K\}$. One can easily see that $\eta(O_\delta(K))\ge\lambda(A)$. Let us note that
$(X\times X)\bs A=\{(x,y)\in X\times X\mid d(x,y)>\delta\}\cup
pr_1^{-1}(X\bs K)$. As $\lambda(d)\le\frac\e2\,\delta$,
$\lambda(\{(x,y)\in X\times X\mid d(x,y)>\delta\})\le\frac\e2$. The fact that
$\mu(K)\ge1-\frac\e2$ и $\HP(pr_1)(\lambda)=\mu$ implies
$\lambda(pr_1^{-1}(X\bs K))\le\frac\e2$. In this case $\lambda((X\times X)\bs
A)\le\frac\e2+\frac\e2$. Therefore,
$\eta(O_\delta(K))\ge\lambda(A)\ge1-\e$. 
\end{proof}

\begin{theorem}\label{t4.6} If $d$ is a complete bounded metric on $X$, then
$\hat d$ is a complete bounded metric on $\HP(X)$.
\end{theorem}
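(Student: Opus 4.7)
The plan is to take an arbitrary Cauchy sequence $(\mu_n)_{n\in\IN}$ in $(\HP(X),\hat d)$, extract a convergent subsequence inside a compactification of $X$, verify that the limit lies in $\HP(X)$, and then upgrade subsequential convergence to convergence of the full sequence using the Cauchy property. Boundedness of $\hat d$ is immediate from Lemma 4.2, so the whole content of the theorem is completeness.

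The main step, and the principal obstacle, is establishing \emph{uniform tightness} of $(\mu_n)$: for every $\e>0$ one must produce a single compactum $L_\e\subset X$ with $\mu_n(L_\e)\ge 1-\e$ for \emph{all} $n$. The idea is to combine the individual Radonness of each $\mu_n$ with Lemma 4.5, which propagates concentration along the sequence in a metrically quantitative way. Given $\e>0$, for each $k\in\IN$ I would choose $N_k$ so that $\hat d(\mu_m,\mu_n)<\e/(2^{k+2}k)$ whenever $m,n\ge N_k$; by Radonness I pick a compactum $K_k\subset X$ satisfying $\mu_n(K_k)\ge 1-\e/2^{k+2}$ simultaneously for all $n\le N_k$ and for $n=N_k$ (a finite union of compacts). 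Lemma 4.5 applied with $\delta=1/k$ then yields $\mu_n(O_{1/k}(K_k))\ge 1-\e/2^{k+1}$ for every $n\in\IN$. I then set
\[ L_\e=\bigcap_{k=1}^\infty \cl_X\bigl(O_{1/k}(K_k)\bigr). \]
Each set $\cl_X(O_{1/k}(K_k))$ is totally bounded (a finite $1/k$-net for the compact $K_k$ yields a finite $2/k$-net for the neighborhood), so $L_\e$ is totally bounded, closed in the complete metric space $(X,d)$, and therefore compact; the union bound $\mu_n(X\bs L_\e)\le\sum_k\e/2^{k+1}=\e/2$ delivers the required uniform tightness.

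With tightness in hand I pass to a compactification $cX$ and view $\HP(X)$ as a topological subspace of the compact space $P(cX)$, so some subsequence $(\mu_{n_j})$ converges to a measure $\mu\in P(cX)$. By Lemma [1,1.19], $\{\nu\in P(cX):\nu(L_{\e})\ge 1-\e\}$ is closed in $P(cX)$, hence $\mu(L_\e)\ge 1-\e$ for every $\e>0$, so $\mu$ is concentrated on the $\sigma$-compact set $\bigcup_{k}L_{1/k}\subset X$ and is Radon, i.e.\ $\mu\in\HP(X)$.

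Finally, Lemma 4.3 asserts that $\hat d$ induces the original topology on $\HP(X)$, so the subsequence $(\mu_{n_j})$ converges to $\mu$ in $\hat d$ as well. A standard Cauchy-plus-convergent-subsequence argument then promotes this to $\hat d$-convergence of the full sequence $(\mu_n)$ to $\mu\in\HP(X)$, which is the desired completeness.
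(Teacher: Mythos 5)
Your argument is correct and is essentially the paper's own: both proofs use Lemma 4.5 together with the Radon property of the individual $\mu_n$ to show that the tail of the Cauchy sequence concentrates on $\delta$-neighborhoods of compacta, use completeness of $(X,d)$ to make the intersection of those (totally bounded, closed) neighborhoods compact, and then pass to $P(cX)$ and invoke [1, 1.19] to see that the limit measure charges that compactum; your only rearrangement is to first extract a single compactum $L_\e$ witnessing uniform tightness of the whole sequence, where the paper works directly with the limit measure. One small repair: $P(cX)$ is compact but in general not sequentially compact (e.g.\ when $X$ is non-separable), so you should take a cluster point of $(\mu_n)$ rather than a convergent subsequence --- exactly as the paper does --- after which your closedness argument still yields $\mu(L_\e)\ge 1-\e$, and a $\hat d$-Cauchy sequence with a $\hat d$-cluster point (via Lemma 4.3) converges to it.
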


\begin{proof} Let $cX$ be any compactification of $X$ and
$\{\mu_n\}_{n=1}^\infty\subset\HP(X)$ be a $\hat d$-Cauchy sequence.
Then there exists a measure $\mu\in P(cX)$ which is a limit point of the sequence $\{\mu_n\}_{n=1}^\infty\subset\HP(X)\subset P(cX)$.
Let us show that $\mu\in\HP(X)$. Fix $\e>0$. We will construct inductively a sequence $\{k(n)\}_{n=1}^\infty\subset\IN$ of numbers and an increasing sequence 
$(K_n)_{n\in\w}$ of compact subsets of $X$ such that for any $n\in\IN$ and any $k,m\ge k(n)$ we get $\hat
d(\mu_k,\mu_m)\le\frac\e2\,2^{-n}$ and $\mu_{k(n)}(K_n)\ge1-\frac\e2$.
Since the space $(X,d)$ is complete, the intersection $K=\bigcap_{n=1}^\infty
O_{2^{-n}}(K_n)\subset X$ is compact [8, 4.3.29]. For every $n\in\IN$ let $C_n=\cl_{cX}(O_{2^{-n}}(K_n))$. One can easily check that
 $K=\bigcap_{n=1}^\infty C_n$. By [1, 1.19], the set $\mathcal
K_n=\{\mu\in P(cX)\colon\mu(C_n)\ge1-\e\}$ is closed in  $P(cX)$. Lemma
4.5 implies that for any $n\in\IN$ and $k\ge k(n)$ we get $\mu_k\in\mathcal K_n$.
Consequently, $\mu\in\bigcap_{n=1}^\infty\mathcal K_n$ and hence 
$\mu(C_n)\ge1-\e$ for every $n\in\IN$. Taking into account that $K=\bigcap_{n=1}^\infty
C_n$, we conclude that $\mu(K)\ge1-\e$ and hence $\mu\in\HP(X)$. Being Cauchy, the
the sequence $\{\lambda_n\}_{n=1}^\infty\subset\HP(X)$ converges to its accumulation point $\mu\in\HP(X)$.
\end{proof}

Now we will investigate the action of the functor $\HP$ on various classes of maps connected with the metric structure. 

\begin{proposition}\label{p4.7} The functor $\HP$ preserves the class of isometric embeddings.
\end{proposition}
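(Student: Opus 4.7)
The plan is to show that if $f:(X,d_X)\to(Y,d_Y)$ is an isometric embedding of bounded metric spaces, then $\HP(f):\HP(X)\to\HP(Y)$ is a topological embedding that preserves the Kantorovich distance, i.e.
\[
\hat d_Y(\HP(f)(\mu),\HP(f)(\eta))=\hat d_X(\mu,\eta)\text{ for all }\mu,\eta\in\HP(X).
\]
That $\HP(f)$ is a topological embedding is already known, since $\HP$ preserves embeddings in $\Tych$ (the fact invoked in the proof of Lemma~4.1). So the substance of the proposition is the displayed isometry identity, which I would establish by two opposite inequalities.

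For ``$\le$'' I would invoke Lemma~4.1 to fix a coupling $\lambda\in\HP(X\times X)$ with marginals $\mu$ and $\eta$ realising $\lambda(d_X)=\hat d_X(\mu,\eta)$. Its push-forward $\lambda':=\HP(f\times f)(\lambda)\in\HP(Y\times Y)$ has marginals $\HP(f)(\mu)$ and $\HP(f)(\eta)$ by functoriality applied to $\pr_i^Y\circ(f\times f)=f\circ\pr_i^X$. Since $f$ is isometric, $d_Y\circ(f\times f)=d_X$, so $\lambda'(d_Y)=\lambda(d_X)=\hat d_X(\mu,\eta)$; taking the infimum over admissible couplings on $Y\times Y$ yields ``$\le$''. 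For ``$\ge$'' I would take an arbitrary coupling $\lambda'\in\HP(Y\times Y)$ of $\HP(f)(\mu)$ and $\HP(f)(\eta)$. Since both marginals of $\lambda'$ are Radon-concentrated on $f(X)$, a routine marginal estimate (of the same flavour used in the proof of Lemma~4.1) shows $\lambda'\big((Y\times Y)\bs(f(X)\times f(X))\big)=0$. Using that $\HP(f\times f):\HP(X\times X)\to\HP(Y\times Y)$ is an embedding whose image consists exactly of the Radon measures on $Y\times Y$ concentrated on $f(X)\times f(X)$, I would recover $\lambda'$ as the image of a unique $\lambda\in\HP(X\times X)$. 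Functoriality then gives $\lambda$ the correct marginals $\mu,\eta$, the isometry of $f$ gives $\lambda(d_X)=\lambda'(d_Y)$, and taking the infimum over admissible $\lambda'$ completes ``$\ge$''.

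The key technical point, and the only real obstacle, is the lifting step in the ``$\ge$'' direction: a Radon coupling on $Y\times Y$ whose marginals are both concentrated on $f(X)$ must itself be the push-forward of a Radon measure on $X\times X$ along $f\times f$. This is precisely ``$\HP$ preserves embeddings'' applied to $f\times f$, together with the characterisation of its image as the Radon measures on $Y\times Y$ concentrated on $f(X)\times f(X)$; once this is in hand, the remainder of the argument is mere bookkeeping with marginal projections and the equation $d_Y\circ(f\times f)=d_X$.
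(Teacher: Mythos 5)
The paper states Proposition~4.7 without any proof (the proof environment that follows belongs to Proposition~4.8 on non-expanding maps), so there is no in-text argument to compare against; your proposal supplies one, and it is correct. Your ``$\le$'' half is exactly the paper's proof of Proposition~4.8 specialized to an isometry: push the optimal coupling $\lambda$ from Lemma~4.1 forward along $f\times f$ and use $d_Y\circ(f\times f)=d_X$. The ``$\ge$'' half is the genuinely new content, and you identify the one real issue correctly: a coupling $\lambda'\in\HP(Y\times Y)$ of $\HP(f)(\mu)$ and $\HP(f)(\eta)$ must be pulled back to $X\times X$. This works for precisely the reason you give --- for each $\e>0$ one finds compacta $K_1,K_2\subset X$ with $\lambda'(f(K_1)\times f(K_2))>1-\e$ (the same two-marginal estimate as in the proof of Lemma~4.1), so $\lambda'$ lies in the image of the embedding $\HP(f\times f)$, and injectivity of $\HP(f)$ recovers the marginals $\mu,\eta$ of the lifted coupling. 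One small caveat of wording: since $f(X)$ need not be Borel in $Y$, the assertion $\lambda'\big((Y\times Y)\bs (f(X)\times f(X))\big)=0$ should be read in the inner-regular sense just described rather than literally; with that reading the argument is complete, and taking infima over couplings on each side yields $\hat d_Y(\HP(f)(\mu),\HP(f)(\eta))=\hat d_X(\mu,\eta)$ as required.
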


\begin{proposition}\label{p4.8} The functor $\HP$ preserves non-expanding mappings.
\end{proposition}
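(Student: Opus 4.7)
My plan is to mimic the classical Kantorovich argument: given a non-expanding map $f:(X,d_X)\to(Y,d_Y)$, pushforward an optimal coupling on $X\times X$ to a coupling on $Y\times Y$, then use $d_Y(f(x),f(x'))\le d_X(x,x')$ pointwise to compare the integrals.

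Concretely, fix $\mu,\eta\in\HP(X)$. By Lemma~\ref{l4.1} applied to the pseudometric $d_X$, I can choose a Radon measure $\lambda\in\HP(X\times X)$ with $\HP(\pr_1)(\lambda)=\mu$, $\HP(\pr_2)(\lambda)=\eta$ and $\lambda(d_X)=\hat d_X(\mu,\eta)$. Form the product map $f\times f:X\times X\to Y\times Y$ (continuous because $f$ is) and set $\lambda':=\HP(f\times f)(\lambda)\in\HP(Y\times Y)$. Functoriality of $\HP$ together with the identities $\pr_i^Y\circ(f\times f)=f\circ\pr_i^X$ gives $\HP(\pr_1^Y)(\lambda')=\HP(f)(\mu)$ and $\HP(\pr_2^Y)(\lambda')=\HP(f)(\eta)$, so $\lambda'$ is an admissible coupling of $\HP(f)(\mu)$ and $\HP(f)(\eta)$.

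Using the change-of-variable formula for pushforward measures, I compute
$$\lambda'(d_Y)=\int_{Y\times Y}d_Y\,d\lambda'=\int_{X\times X}d_Y(f(x),f(x'))\,d\lambda(x,x')\le\int_{X\times X}d_X(x,x')\,d\lambda=\lambda(d_X),$$
where the inequality is exactly the non-expanding hypothesis on $f$. Plugging into the definition of $\hat d_Y$ yields
$$\hat d_Y(\HP(f)(\mu),\HP(f)(\eta))\le\lambda'(d_Y)\le\lambda(d_X)=\hat d_X(\mu,\eta),$$
which is the required non-expansiveness of $\HP(f)$.

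The only genuine obstacle is making sure $\lambda'$ really is a Radon measure on $Y\times Y$ and that the pushforward formula applies; both are immediate because $\HP$ is a functor on $\Tych$ (so it sends $f\times f$ to a continuous affine map $\HP(f\times f):\HP(X\times X)\to\HP(Y\times Y)$) and because change-of-variables for the pushforward $\HP(f\times f)(\lambda)$ holds for any bounded continuous function on $Y\times Y$, in particular for the bounded continuous pseudometric $d_Y$.
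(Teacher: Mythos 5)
Your proof is correct and follows essentially the same route as the paper: take an optimal coupling $\lambda$ from Lemma~4.1, push it forward by $f\times f$ to obtain an admissible coupling of the image measures, and apply the pointwise inequality $\varrho(f(x),f(y))\le d(x,y)$ under the integral. No gaps.
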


\begin{proof} Let $f:(X,d)\to(Y,\varrho)$ be a non-expanding map and $\mu,\eta\in\HP(X)$. Let us show that
$\hat\varrho(\HP(f)(\mu),\HP(f)(\eta))\le\hat d(\mu,\eta)$. By Lemma 4.1,
there exists a measure $\lambda\in\HP(X\times X)$ such that
$\HP(pr_1)(\lambda)=\mu,\;\HP(pr_2)(\lambda)=\eta$ and $\lambda(d)=\hat
d(\mu,\eta)$. Then the measure $\HP(f\times f)(\lambda)\in\HP(Y\times Y)$
satusfies the following conditions: $\HP(pr_1)(\HP(f\times
f)(\lambda))=\HP(f)(\mu)$ and $\HP(pr_2)(\HP(f\times
f)(\lambda))=\HP(f)(\eta)$. Therefore,
$\hat\varrho(\HP(f)(\mu),\HP(f)(\eta))\le\HP(f\times
f)(\lambda)(\varrho)=\lambda(\varrho\circ(f\times f))$. Since the map
$f$ is non-expanding, $\varrho(f(x),f(y))\le d(x,y)$ for every $(x,y)\in X\times
X$. Thus,
$\hat\varrho(\HP(f)(\mu),\HP(f)(\eta))\le\lambda(\varrho\circ(f\times f))\le
\lambda(d)=\hat d(\mu,\eta)$. 
\end{proof}

Now we will deal with the question of extension of pseudometrics from a Tychonoff space $X$ to the space $\Pt(X)$ of $\tau$-smooth probability measures on $X$.

Let $p$ be a continuous pseudometric on a Tychonoff space $X$. By $(X_p,d_p)$ we denote the metric space induced by the pseudometric $p$ and by $\pi:X\to X_p$ the corresponding projection. Let $(X'_p, d'_p)$ be the completion of the metric space $(X_p,d_p)$.
As $X'_p$ is a complete metric space,
$\Pt(X_p')=\HP(X_p')$. Let us consider a map $\Pt(\pi):\Pt(X)\to
\Pt(X_p)\subset \Pt(X_p')=\HP(X_p')$ and define a pseudometric $p_\tau$ on
$\Pt(X)$ by the formula $p_\tau(\mu,\eta)=\hat
d'_p(\Pt(\pi)(\mu),\Pt(\pi)(\eta))$, $\mu,\eta\in \Pt(X)$, where $\hat d'_p$
is a metric on the space $\HP(X'_p)=\Pt(X'_p)$ induced by the metric
$d'_p$ on $X'_p$.

Since the functors $\Pt$ and $\HP$ preserve the class of embeddings, Lemmas 4.2,
4.3, and the definition of the pseudometric $p_\tau$ imply

\begin{proposition}\label{p4.9} If $p$ is a continuous bounded pseudometric on a Tychonoff space $X$, then $p_\tau$ is a continuous pseudometric on $\Pt(X)$. Moreover, $\diam(X,p)=\diam(\Pt(X),p_\tau)$.
\end{proposition}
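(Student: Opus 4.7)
The plan is to derive both assertions from Lemma \ref{l4.2}, applied to the complete bounded metric space $(X'_p,d'_p)$, by transporting structure via the continuous map $\Pt(\pi)\colon\Pt(X)\to\HP(X'_p)$.

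First, I would verify that $p_\tau$ is a continuous pseudometric. Since $p$ is bounded and $\pi\colon X\to X_p$ satisfies $d_p(\pi(x),\pi(y))=p(x,y)$, the metric $d_p$ on $X_p$ is bounded, hence so is its extension $d'_p$ on the completion $X'_p$. Lemma \ref{l4.2} then provides a continuous bounded pseudometric $\hat d'_p$ on $\HP(X'_p)=\Pt(X'_p)$. Since $\Pt$ is a functor preserving embeddings, $\Pt(\pi)$ is continuous as a map into $\HP(X'_p)$. Consequently, $p_\tau=\hat d'_p\circ(\Pt(\pi)\times\Pt(\pi))$ is continuous, and inherits symmetry, the triangle inequality, and vanishing on the diagonal from $\hat d'_p$; so $p_\tau$ is a continuous pseudometric on $\Pt(X)$.

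For the diameter identity, I would string together the equalities
\[
\diam(X,p)=\diam(X_p,d_p)=\diam(X'_p,d'_p)=\diam(\HP(X'_p),\hat d'_p),
\]
where the first holds because $\pi$ is surjective and pseudo-isometric, the second because $X_p$ is dense in its completion and $d'_p$ is continuous, and the third is Lemma \ref{l4.2}. Since $p_\tau(\mu,\eta)\le\diam(\HP(X'_p),\hat d'_p)$ by definition, we obtain the bound $\diam(\Pt(X),p_\tau)\le\diam(X,p)$.

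For the reverse inequality I would test on Dirac measures: for $x,y\in X$ we have $\delta_x,\delta_y\in\Pt(X)$ and $\Pt(\pi)(\delta_x)=\delta_{\pi(x)}$; the only Radon measure on $X'_p\times X'_p$ with marginals $\delta_{\pi(x)},\delta_{\pi(y)}$ is $\delta_{(\pi(x),\pi(y))}$, so Lemma \ref{l4.1} yields $\hat d'_p(\delta_{\pi(x)},\delta_{\pi(y)})=d'_p(\pi(x),\pi(y))=p(x,y)$. Hence $p_\tau(\delta_x,\delta_y)=p(x,y)$, and the supremum over $x,y\in X$ gives $\diam(\Pt(X),p_\tau)\ge\diam(X,p)$. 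No serious obstacle is expected: the whole argument is a bookkeeping reduction to the metric case already treated in Lemma \ref{l4.2}, pulled back along the functorial map $\Pt(\pi)$.
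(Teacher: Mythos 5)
Your proposal is correct and follows exactly the route the paper intends: the paper gives no written proof, merely noting that Proposition \ref{p4.9} follows from Lemmas \ref{l4.2}--\ref{l4.3}, the preservation of embeddings by $\Pt$ and $\HP$, and the definition of $p_\tau$ as the pullback of $\hat d'_p$ along $\Pt(\pi)$. Your write-up simply supplies the omitted bookkeeping (continuity and pseudometric axioms by pullback, the diameter chain through $X_p$ and $X'_p$, and the Dirac-measure computation for the reverse inequality), all of which is sound.
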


\begin{proposition}\label{p4.10} If $d$ is a compatible bounded metric on the space $X$, then $d_\tau$ is a compatible bounded metric on $\Pt(X)$.
\end{proposition}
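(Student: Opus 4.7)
The plan is to reduce Proposition~\ref{p4.10} to Lemma~\ref{l4.3} via the embedding $\Pt(\pi)\colon\Pt(X)\to\HP(X'_d)$ used to define $d_\tau$.

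First I would unpack the construction when the pseudometric happens to be a metric. Since $d$ is a compatible metric on $X$, the equivalence relation ``$d(x,y)=0$'' is trivial, so $(X_d,d_d)$ coincides with $(X,d)$ and the projection $\pi\colon X\to X_d$ is the identity. The completion $(X'_d,d'_d)$ then contains $(X,d)$ as a dense metric subspace, and the inclusion $\iota\colon X\hookrightarrow X'_d$ is a topological embedding because $d$ is compatible on $X$.

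Next, I would invoke the two key inputs. Since the functor $\Pt$ preserves embeddings (the fact cited in the paragraph preceding Proposition~\ref{p4.9}), the induced map $\Pt(\pi)=\Pt(\iota)\colon \Pt(X)\to\Pt(X'_d)$ is a topological embedding. Because $(X'_d,d'_d)$ is a complete metric space, $\Pt(X'_d)=\HP(X'_d)$, and Lemma~\ref{l4.3} tells us that $\hat d'_d$ is a compatible bounded metric on $\HP(X'_d)$. By definition,
$$d_\tau(\mu,\eta)=\hat d'_d\bigl(\Pt(\pi)(\mu),\Pt(\pi)(\eta)\bigr),$$
so $d_\tau$ is precisely the pullback of the metric $\hat d'_d$ along the injective continuous map $\Pt(\pi)$. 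Injectivity (coming from the embedding property) together with the fact that $\hat d'_d$ is a metric forces $d_\tau$ to separate points; boundedness is inherited from $\hat d'_d$, in fact with $\diam(\Pt(X),d_\tau)=\diam(X,d)$ by Proposition~\ref{p4.9}.

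It remains to check compatibility: the topology generated by $d_\tau$ on $\Pt(X)$ equals the original topology. This is the standard fact that the pullback of a compatible metric along a topological embedding is compatible. Indeed, $\Pt(\pi)$ is a homeomorphism onto its image $\Pt(\pi)(\Pt(X))\subset\HP(X'_d)$, and on this image the metric $\hat d'_d$ induces the subspace topology inherited from $\HP(X'_d)$; transporting back via $\Pt(\pi)^{-1}$ recovers the original topology of $\Pt(X)$ on the one hand and the $d_\tau$-topology on the other. I do not expect a genuine obstacle here: the content of Proposition~\ref{p4.10} is almost entirely packaged into Lemma~\ref{l4.3} plus preservation of embeddings, with only the bookkeeping of the pullback metric to verify.
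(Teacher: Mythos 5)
Your proposal is correct and follows exactly the route the paper intends: the paper gives no separate proof of Proposition~\ref{p4.10}, justifying it (together with Proposition~\ref{p4.9}) by the single remark that the functors $\Pt$ and $\HP$ preserve embeddings, by Lemmas 4.2--4.3, and by the definition of $d_\tau$ as the pullback of $\hat d'_d$ along $\Pt(\pi)$. Your write-up is simply a careful unpacking of that same argument, including the observation that $\pi$ is the identity when $d$ is a genuine metric and that the pullback of a compatible metric along a topological embedding is compatible.
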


\begin{theorem}\label{t4.11} The functor $\Pt$ can be lifted to the category $\mathcal{BM}etr$
of bounded metric spaces and their continuous maps.
\end{theorem}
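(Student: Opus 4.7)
The plan is to mimic the structure used for $\HP$ in Theorem 4.4, but using the pseudometric $p_\tau$ constructed just before Proposition 4.9 in place of $\hat d$. Concretely, I would define the lifted functor $\tilde \Pt:\mathcal{BM}etr\to\mathcal{BM}etr$ on an object $(X,d)$ by $\tilde\Pt(X,d)=(\Pt(X),d_\tau)$ and on a morphism $f:(X,d)\to(Y,\varrho)$ by $\tilde\Pt(f)=\Pt(f)$, and then check each of (i) well-definedness on objects, (ii) well-definedness on morphisms, (iii) functoriality, and (iv) the relation $U\circ\tilde\Pt=\Pt\circ U$ with the forgetful functor $U:\mathcal{BM}etr\to\Tych$.

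For (i), I would invoke Proposition 4.10: $d_\tau$ is a compatible bounded metric on $\Pt(X)$, so $(\Pt(X),d_\tau)$ is indeed an object of $\mathcal{BM}etr$. Boundedness is reinforced by the equality $\diam(\Pt(X),d_\tau)=\diam(X,d)$ from Proposition 4.9. For (iii) and (iv), the argument is purely formal: $\tilde\Pt(\id_X)=\Pt(\id_X)=\id_{\Pt(X)}$ and $\tilde\Pt(g\circ f)=\Pt(g\circ f)=\Pt(g)\circ\Pt(f)=\tilde\Pt(g)\circ\tilde\Pt(f)$, while $U(\tilde\Pt(X,d))=\Pt(X)=\Pt(U(X,d))$ and similarly on morphisms, so nothing substantive is needed here.

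Step (ii) is the only point that requires a small argument. Given a continuous map $f:(X,d)\to(Y,\varrho)$ in $\mathcal{BM}etr$, I have to check that $\Pt(f):(\Pt(X),d_\tau)\to(\Pt(Y),\varrho_\tau)$ is continuous \emph{as a map of metric spaces}. Since $\Pt$ is a functor on $\Tych$, the map $\Pt(f)$ is continuous with respect to the canonical Tychonoff topologies on $\Pt(X)$ and $\Pt(Y)$; and Proposition 4.10 tells us exactly that these Tychonoff topologies coincide with the metric topologies induced by $d_\tau$ and $\varrho_\tau$ respectively. Hence continuity in the metric sense is automatic.

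I do not expect any real obstacle: all the analytic work (construction of $p_\tau$ via the projection $\pi:X\to X_p$ into the completed quotient metric space $(X_p',d_p')$, continuity of $p_\tau$, the diameter identity, and compatibility of $d_\tau$ with the topology on $\Pt(X)$) has already been carried out in Propositions 4.9 and 4.10, by reducing to the corresponding facts for $\HP$ via the equality $\Pt(X_p')=\HP(X_p')$ on complete metric spaces and the preservation of embeddings by $\Pt$. Theorem 4.11 itself is therefore essentially a packaging statement, and I would present the proof in three or four lines as above.
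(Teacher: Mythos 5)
Your proposal is correct and follows exactly the route the paper intends: Theorem~4.11 is stated without an explicit proof, being an immediate consequence of the construction of $p_\tau$ together with Propositions~4.9 and~4.10, in direct parallel with the way Theorem~4.4 is derived from Lemmas~4.2 and~4.3. Your verification of points (i)--(iv), with the observation that continuity of $\Pt(f)$ in the metric sense is automatic because $d_\tau$ and $\varrho_\tau$ are \emph{compatible} metrics, supplies precisely the formal packaging the paper leaves implicit.
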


\begin{remark}\label{r4.12} For any bounded pseudometric $d$ on $X$
and any Radon measures
$\mu,\eta\in\HP(X)\subset\Pt(X)$ the distance $d_\tau(\mu,\eta)$ coincides with the distance $\hat d(\mu,\eta)$, defined at the beginning of this section. This follows from the fact that the pseudometrics $d_\tau$ and $\hat d$ are continuous on $\HP(X)$ and the equality $\hat d(\mu,\eta)=d_\tau(\mu,\eta)$ holds for measures $\mu,\eta\in\HP(X)$ with finite supports.
\end{remark}

Since for complete metric spaces the spaces of Radon and $\tau$-smoothe measures coincide, Theorem
4.6 implies:

\begin{theorem}\label{t4.13} If $d$ is a complete bounded metric on $X$, then
$d_\tau$ is a complete bounded metric on $\Pt(X)$.
\end{theorem}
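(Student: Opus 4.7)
The plan is to reduce Theorem 4.13 to the already-established Theorem 4.6 via the identification of Radon and $\tau$-smooth measures on complete metric spaces and the agreement of the two pseudometrics on $\HP(X)$.

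First, I would invoke the fact (stated in the paragraph preceding the theorem, and used throughout this section) that for any complete metric space $(X,d)$ one has $\Pt(X)=\HP(X)$. Thus, as a set, the underlying space on which $d_\tau$ is defined coincides with $\HP(X)$.

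Next, I would appeal to Remark~\ref{r4.12}, which asserts that for every bounded (pseudo)metric $d$ on a Tychonoff space $X$ the pseudometric $d_\tau$ restricted to $\HP(X)\subset\Pt(X)$ coincides with the pseudometric $\hat d$ constructed at the beginning of this section. Since in our setting $\Pt(X)=\HP(X)$, the equality $d_\tau=\hat d$ holds on all of $\Pt(X)$, not merely on a subset.

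Finally, I would apply Theorem~\ref{t4.6}: since $d$ is a complete bounded metric on $X$, $\hat d$ is a complete bounded metric on $\HP(X)$. Transporting this conclusion through the equality $d_\tau=\hat d$ and the identification $\Pt(X)=\HP(X)$ yields that $d_\tau$ is a complete bounded metric on $\Pt(X)$, which is exactly the claim. I do not anticipate any technical obstacle beyond checking that each of the three ingredients (the coincidence $\Pt=\HP$ on complete metrics, Remark~\ref{r4.12}, and Theorem~\ref{t4.6}) is applied with the correct hypotheses; all three are already available in the preceding material.
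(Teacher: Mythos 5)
Your proposal is correct and matches the paper's own (very terse) argument: the paper derives Theorem~4.13 from Theorem~4.6 precisely via the coincidence $\Pt(X)=\HP(X)$ for complete metric spaces, with Remark~4.12 supplying the identification $d_\tau=\hat d$ on $\HP(X)$. You have simply made explicit the ingredients the paper leaves implicit.
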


Using Propositions 4.7, 4.8, we can prove:

\begin{proposition}\label{p4.14} The functor $\Pt$ preserves the class of isometric embeddings.
\end{proposition}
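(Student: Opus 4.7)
The plan is to reduce everything to Proposition~\ref{p4.7} for $\HP$ by passing to the metric completions, where $\Pt$ and $\HP$ coincide.

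First I would fix an isometric embedding $f:(X,d)\to(Y,\varrho)$ of bounded metric spaces. Since $d$ and $\varrho$ are genuine metrics, in the construction of $d_\tau$ and $\varrho_\tau$ the intermediate metric quotients are just $X$ and $Y$ themselves, and the only nontrivial step is passage to the completions $(X',d')$ and $(Y',\varrho')$, with canonical isometric inclusions $\iota_X\colon X\hookrightarrow X'$ and $\iota_Y\colon Y\hookrightarrow Y'$. The map $f$ extends uniquely to an isometric embedding $\bar f\colon(X',d')\to(Y',\varrho')$ with $\bar f\circ\iota_X=\iota_Y\circ f$ — this is a standard fact about uniformly continuous maps into complete metric spaces.

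Next I would apply Proposition~\ref{p4.7} to $\bar f$: the map $\HP(\bar f)\colon(\HP(X'),\hat{d'})\to(\HP(Y'),\hat{\varrho'})$ is an isometric embedding. Because $(X',d')$ and $(Y',\varrho')$ are complete metric spaces, we have $\HP(X')=\Pt(X')$ and $\HP(Y')=\Pt(Y')$, so $\HP(\bar f)=\Pt(\bar f)$. Functoriality of $\Pt$ applied to $\bar f\circ\iota_X=\iota_Y\circ f$ yields
$$\Pt(\bar f)\circ\Pt(\iota_X)=\Pt(\iota_Y)\circ\Pt(f).$$
Since $\Pt$ preserves topological embeddings, $\Pt(\iota_X)$ and $\Pt(\iota_Y)$ realise $\Pt(X)$ and $\Pt(Y)$ as subspaces of $\Pt(X')$ and $\Pt(Y')$ respectively, and by the very definition of $d_\tau$ and $\varrho_\tau$ these pseudometrics are precisely the pullbacks of $\hat{d'}$ and $\hat{\varrho'}$ along $\Pt(\iota_X)$ and $\Pt(\iota_Y)$.

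Finally I would conclude by a short diagram-chase: for any $\mu,\eta\in\Pt(X)$,
$$\varrho_\tau(\Pt(f)(\mu),\Pt(f)(\eta))=\hat{\varrho'}\bigl(\Pt(\iota_Y)\Pt(f)(\mu),\,\Pt(\iota_Y)\Pt(f)(\eta)\bigr)=\hat{\varrho'}\bigl(\HP(\bar f)\Pt(\iota_X)(\mu),\,\HP(\bar f)\Pt(\iota_X)(\eta)\bigr),$$
and the last quantity equals $\hat{d'}(\Pt(\iota_X)(\mu),\Pt(\iota_X)(\eta))=d_\tau(\mu,\eta)$ because $\HP(\bar f)$ is an isometric embedding. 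I do not anticipate a real obstacle here: the analytic content sits entirely in Proposition~\ref{p4.7} (and in the fact that $\Pt=\HP$ on complete metric spaces), while the remainder is bookkeeping around the defining commutative diagram of $d_\tau$.
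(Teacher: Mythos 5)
Your proposal is correct and follows essentially the same route the paper intends: the paper states Proposition 4.14 with only the remark ``Using Propositions 4.7, 4.8, we can prove,'' relying on exactly the reduction you spell out — extend the isometric embedding to the completions, invoke Proposition~\ref{p4.7} for $\HP$ there, use $\Pt=\HP$ on complete metric spaces, and note that $d_\tau$ and $\varrho_\tau$ are by definition the pullbacks of $\hat{d'}$ and $\hat{\varrho'}$. Your write-up simply makes explicit the bookkeeping the paper leaves to the reader.
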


\begin{proposition}\label{p4.15} The functor $\Pt$ preserves non-expanding mappings.
\end{proposition}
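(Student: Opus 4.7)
The plan is to reduce the claim to the already-established Proposition 4.8 for the functor $\HP$ by lifting $f$ to the completions of $X$ and $Y$. Given a non-expanding map $f:(X,d)\to(Y,\varrho)$ between bounded metric spaces, $f$ is uniformly continuous, so it extends uniquely to a non-expanding map $\bar f:(X',d')\to(Y',\varrho')$ between the metric completions. Since the completions are complete, $\Pt(X')=\HP(X')$ and $\Pt(Y')=\HP(Y')$, so Proposition 4.8 applies to $\bar f$ on the completions and shows that $\HP(\bar f)=\Pt(\bar f)$ is non-expanding with respect to $\hat d'$ and $\hat\varrho'$.

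Let $i_X:X\hookrightarrow X'$ and $i_Y:Y\hookrightarrow Y'$ be the canonical embeddings, which satisfy $i_Y\circ f=\bar f\circ i_X$ by the construction of $\bar f$. Functoriality of $\Pt$ then yields the commutative square $\Pt(i_Y)\circ\Pt(f)=\Pt(\bar f)\circ\Pt(i_X)$. Combining this identity with the definition $p_\tau(\mu,\eta)=\hat d'\big(\Pt(i)(\mu),\Pt(i)(\eta)\big)$ (for a metric $p$, the quotient $X_p$ is $X$ itself, so the projection from the paragraph preceding Proposition~4.9 is just the embedding into the completion), we obtain for every $\mu,\eta\in\Pt(X)$:
\[
\varrho_\tau\big(\Pt(f)(\mu),\Pt(f)(\eta)\big)=\hat\varrho'\big(\Pt(\bar f)(\Pt(i_X)(\mu)),\,\Pt(\bar f)(\Pt(i_X)(\eta))\big)\le\hat d'\big(\Pt(i_X)(\mu),\Pt(i_X)(\eta)\big)=d_\tau(\mu,\eta),
\]
which is precisely the non-expanding property of $\Pt(f)$.

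There is essentially no serious obstacle: the three ingredients — the existence of a non-expanding extension of $f$ to the completions, the identification $\Pt=\HP$ on complete metric spaces, and Proposition~4.8 — are all already in place. The only minor point worth noting is the naturality of the completion square after applying $\Pt$, but this is immediate from functoriality. Thus the proof reduces to chasing the diagram above.
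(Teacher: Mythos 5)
Your proof is correct and follows exactly the route the paper intends: the paper offers no explicit argument for Proposition 4.15 beyond the remark ``Using Propositions 4.7, 4.8, we can prove,'' together with the strategy announced at the start of Section 4 of passing to completions, where $\Pt=\HP$, and pulling the estimate back through the definition of $d_\tau$ via $\Pt(\pi)$. Your diagram chase supplies precisely the details the paper leaves to the reader, and each step (the non-expanding extension $\bar f$ to the completions, the identification $\Pt(X')=\HP(X')$, and the appeal to Proposition 4.8) is sound.
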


For maps $f,g:Y\to X$ and a bounded pseudometric $d$ on $X$
let $d(f,g)=\sup\{ d(f(y),g(y)): y\in Y\}$.

\begin{proposition}\label{p4.16} For any bounded pseudometric
$d$ on a Tychonoff space $X$ and any maps $f,g:Y\to X$ we have that
$d(f,g)=d_\tau(\Pt(f),\Pt(g))$.
\end{proposition}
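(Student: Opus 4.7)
The plan is to prove both inequalities between
$d(f,g) = \sup_{y\in Y} d(f(y),g(y))$ and
$d_\tau(\Pt(f),\Pt(g)) := \sup_{\mu\in \Pt(Y)} d_\tau(\Pt(f)(\mu),\Pt(g)(\mu))$, interpreting the right-hand side in the same ``sup'' sense used on the left.

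For the direction $d(f,g)\le d_\tau(\Pt(f),\Pt(g))$, I would test against Dirac measures. For each $y\in Y$ we have $\Pt(f)(\delta_y)=\delta_{f(y)}$ and $\Pt(g)(\delta_y)=\delta_{g(y)}$. The key observation is that $d_\tau(\delta_x,\delta_{x'})=d(x,x')$ for all $x,x'\in X$: passing through the metric identification $\pi:X\to X_p$ and its completion $(X'_p,d'_p)$, the only measure in $\HP(X'_p\times X'_p)$ with marginals $\delta_{\pi(x)}$ and $\delta_{\pi(x')}$ is $\delta_{(\pi(x),\pi(x'))}$, so $\hat d'_p(\delta_{\pi(x)},\delta_{\pi(x')})=d'_p(\pi(x),\pi(x'))=d(x,x')$. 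Taking the supremum over $y\in Y$ of $d_\tau(\delta_{f(y)},\delta_{g(y)})=d(f(y),g(y))$ yields the inequality.

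For the reverse direction, fix $\mu\in\Pt(Y)$ and build an explicit coupling of the two pushed measures. Consider the map $h=(\pi\circ f,\pi\circ g):Y\to X'_p\times X'_p$ and let $\lambda=\Pt(h)(\mu)$. Since $X'_p\times X'_p$ is a complete metric space we have $\Pt(X'_p\times X'_p)=\HP(X'_p\times X'_p)$, so $\lambda\in\HP(X'_p\times X'_p)$. Functoriality of $\Pt$ together with $\pr_i\circ h=\pi\circ f$ (resp.\ $\pi\circ g$) gives that $\lambda$ has marginals $\Pt(\pi)(\Pt(f)(\mu))$ and $\Pt(\pi)(\Pt(g)(\mu))$. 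Hence by the definition of $\hat d'_p$ and of $d_\tau$,
$$d_\tau(\Pt(f)(\mu),\Pt(g)(\mu))=\hat d'_p(\Pt(\pi f)(\mu),\Pt(\pi g)(\mu))\le \lambda(d'_p).$$
Applying the change-of-variables formula to the pushforward $\lambda=\Pt(h)(\mu)$ gives
$$\lambda(d'_p)=\int_Y d'_p(\pi f(y),\pi g(y))\,d\mu(y)=\int_Y d(f(y),g(y))\,d\mu(y)\le d(f,g).$$
Combining the two estimates and taking the supremum over $\mu$ yields $d_\tau(\Pt(f),\Pt(g))\le d(f,g)$.

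The only non-routine point is the bookkeeping through the metric identification and its completion needed to make $d_\tau$ accessible; once $\lambda$ is produced as a legitimate coupling in $\HP(X'_p\times X'_p)$, both the coupling bound for $\hat d'_p$ and the change-of-variables computation are immediate, and the Dirac test case was already prepared in Remark~4.12.
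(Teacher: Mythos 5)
Your proof is correct and follows essentially the same route as the paper's: the lower bound by testing against Dirac measures, and the upper bound by pushing $\mu$ forward under the pair map to obtain an explicit coupling of $\Pt(f)(\mu)$ and $\Pt(g)(\mu)$. The only cosmetic differences are that the paper reduces at the outset to the case of a compatible metric and bounds $\int d'\,d\lambda$ by noting $\supp(\lambda)\subset\{(x,x'): d'(x,x')\le d(f,g)\}$, whereas you keep the pseudometric quotient $\pi$ explicit and use the change-of-variables formula; both yield the same estimate.
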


\begin{proof} Without loss of generality, $d$ is a compatible metric on $X$. Let $(X',d')$ be a completion of the space $(X,d)$.
Let us note that $$
\begin{aligned}
d_\tau(\Pt(f),\Pt(g))&=\sup\{d_\tau(\Pt(f)(\mu),\Pt(g)(\mu))
\colon\mu\in\Pt(Y)\}\\
&\ge\sup\{d_\tau(\Pt(f)(\delta_y),\Pt(g)(\delta_y))
\colon y\in Y\}=\sup\{ d(f(y),g(y))\mid y\in Y\}=d(f,g).
\end{aligned}
$$
We will show that $d_\tau(\Pt(f),\Pt(g))\le d(f,g)$.
Fix a measure $\mu\in\Pt(Y)$. Consider a map $(f,g):Y\to
X\times X$ and $\Pt(f,g):\Pt(Y)\to\Pt(X\times X)$ and let
$\lambda=\Pt(f,g)(\mu)\in \Pt(X\times X)\subset \HP(X'\times X')$.
One can easily see that
$\Pt(pr_1)(\lambda)=\Pt(f)(\mu)$ and $\Pt(pr_2)(\lambda)=\Pt(g)(\mu)$.
Apart from that, $\supp(\lambda)\subset\{(x,x')\in X'\times X': d'(x,x')\le
d(f,g)\}$. Then $d_\tau(\Pt(f)(\mu),\Pt(g)(\mu))=
\hat d'(\Pt(f)(\mu),\Pt(g)(\eta))\le\int d'\;d\lambda\le
d(f,g)$. The proposition is proved.
\end{proof}

Now let us look into the connection between the metric and convex structures on $\Pt(X)$.

A pseudometric $\varrho$ on a convex subset $Y$ of a vector space will be called {\em convex} if for any $x,x',y,y'\in Y$ and
$t\in[0,1]$ we get
$$\varrho(tx+(1-t)y,tx'+(1-t)y')\le
t\varrho(x,x')+(1-t)\varrho(y,y').
$$ One can easily observe that balls are convex with respect to a convex pseudometric. Moreover, if $d$ is a convex pseudometric on $Y$, then for any $x,y\in Y$ and $t\in[0,1]$ we get $d(tx+(1-t)y,y)\le td(x,y)$.

One can easily prove the following

\begin{proposition}\label{p4.17} For any bounded pseudometric $d$
on a Tychonoff space $X$, the pseudometric $d_\tau$ on $\Pt(X)$ is convex.
\end{proposition}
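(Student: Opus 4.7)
The plan is to reduce the problem to the Kantorovich metric $\hat d'_p$ on $\HP(X'_p)$, where the existence of optimal couplings (Lemma 4.1) is available, and then convex-combine optimal couplings.

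First I would invoke the definition $d_\tau(\mu,\eta)=\hat d'_p(\Pt(\pi)(\mu),\Pt(\pi)(\eta))$, where $\pi\colon X\to X_p$ is the canonical projection and $(X'_p,d'_p)$ is the completion of the pseudometric quotient. Since the pushforward $\Pt(\pi)$ is the restriction of a linear operator on measures, it is affine, i.e.\ $\Pt(\pi)(t\mu+(1-t)\eta)=t\Pt(\pi)(\mu)+(1-t)\Pt(\pi)(\eta)$. Hence it is enough to verify the convexity inequality for $\hat d'_p$ on $\HP(X'_p)=\Pt(X'_p)$. So I would reduce to proving the following intermediate statement: for a bounded metric space $(Y,d)$, the Kantorovich pseudometric $\hat d$ on $\HP(Y)$ is convex.

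To establish this intermediate claim, fix $\mu_1,\mu_2,\eta_1,\eta_2\in\HP(Y)$ and $t\in[0,1]$. By Lemma~4.1, pick Radon measures $\lambda_i\in\HP(Y\times Y)$ (for $i=1,2$) with $\HP(\pr_1)(\lambda_i)=\mu_i$, $\HP(\pr_2)(\lambda_i)=\eta_i$, and $\lambda_i(d)=\hat d(\mu_i,\eta_i)$. Form the convex combination $\lambda=t\lambda_1+(1-t)\lambda_2$. By Proposition~3.14 (or by the direct observation that a convex combination of Radon measures is Radon), $\lambda\in\HP(Y\times Y)$. Since each pushforward map $\HP(\pr_j)$ is affine, the marginals of $\lambda$ are $t\mu_1+(1-t)\mu_2$ and $t\eta_1+(1-t)\eta_2$ respectively; and integration against $d$ is linear in the measure, so $\lambda(d)=t\lambda_1(d)+(1-t)\lambda_2(d)$. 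Plugging $\lambda$ as a candidate coupling into the infimum definition of $\hat d$ yields
$$\hat d(t\mu_1+(1-t)\mu_2,\;t\eta_1+(1-t)\eta_2)\le\lambda(d)=t\hat d(\mu_1,\eta_1)+(1-t)\hat d(\mu_2,\eta_2),$$
which is exactly the convexity inequality.

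Combining this with the affinity of $\Pt(\pi)$ from the first step gives the desired inequality for $d_\tau$ on $\Pt(X)$. There is no real obstacle beyond bookkeeping: the essential ingredient is that Lemma~4.1 lets us work with genuine optimal couplings (so that no $\varepsilon$-argument is needed), and the remaining content is the linearity of pushforwards and of the functional $\nu\mapsto\nu(d)$. If one wished to avoid Lemma~4.1 entirely, one could instead use near-optimal couplings $\lambda_i$ with $\lambda_i(d)<\hat d(\mu_i,\eta_i)+\varepsilon$ and let $\varepsilon\to0$; the structure of the argument is identical.
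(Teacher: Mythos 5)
Your proof is correct: the paper states Proposition 4.17 without proof (``One can easily prove the following''), and your argument --- reducing via the affinity of $\Pt(\pi)$ to the Kantorovich pseudometric $\hat d'_p$ on $\HP(X'_p)$ and then taking the convex combination $t\lambda_1+(1-t)\lambda_2$ of (near-)optimal couplings as a competitor in the infimum --- is exactly the standard argument the author intends the reader to supply. No gaps; the use of Lemma 4.1 is a convenience, and as you note the $\varepsilon$-version works just as well.
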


\begin{proposition}\label{p4.18} Let $X$ be a convex barycentric subset of a locally convex space, and let $d$ be a convex bounded metric on $X$. Then, the barycenter map
$b_X:(\HP(X),\hat d)\to(X,d)$ is non-expanding.
\end{proposition}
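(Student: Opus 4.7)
The plan is to reduce to the case of finitely supported measures and then pass to the limit, extracting everything from Lemma~\ref{l4.1}, Lemma~\ref{l3.1}, Theorem~\ref{t3.2}, and the convexity hypothesis on $d$. To begin, I would fix $\mu,\eta\in\HP(X)$ and invoke Lemma~\ref{l4.1} to choose an optimal coupling $\lambda\in\HP(X\times X)$ with $\HP(\pr_1)(\lambda)=\mu$, $\HP(\pr_2)(\lambda)=\eta$, and $\lambda(d)=\hat d(\mu,\eta)$. The task then reduces to showing $d(b_X(\mu),b_X(\eta))\le\lambda(d)$.

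I would first handle the case in which $\lambda=\sum_{i=1}^n t_i\delta_{(x_i,y_i)}$ is finitely supported. Its marginals are $\mu=\sum_i t_i\delta_{x_i}$ and $\eta=\sum_i t_i\delta_{y_i}$, so by Lemma~\ref{l3.1} and the affinity of $b_X$ one gets $b_X(\mu)=\sum_i t_i x_i$ and $b_X(\eta)=\sum_i t_i y_i$. Iterating the convexity axiom for $d$ from the two-point case yields
$$d\Big(\sum_i t_i x_i,\sum_i t_i y_i\Big)\le \sum_i t_i\, d(x_i,y_i)=\lambda(d),$$
which is exactly the desired inequality in this special case.

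For a general $\lambda$, I would approximate it by a net $\lambda_\alpha\to\lambda$ of finitely supported probability measures in $\HP(X\times X)$; density of such measures in $\HP$ is a standard fact already used implicitly in this paper (see the proof of Theorem~\ref{t3.17}). Setting $\mu_\alpha=\HP(\pr_1)(\lambda_\alpha)$ and $\eta_\alpha=\HP(\pr_2)(\lambda_\alpha)$, the previous paragraph gives $d(b_X(\mu_\alpha),b_X(\eta_\alpha))\le \lambda_\alpha(d)$. To pass to the limit I would use that $X$ is bounded (since barycentric), so by Theorem~\ref{t3.2} the barycenter map $b_X$ is continuous; combined with continuity of $\HP(\pr_i)$ this gives $b_X(\mu_\alpha)\to b_X(\mu)$ and $b_X(\eta_\alpha)\to b_X(\eta)$, and continuity of $d$ on $X\times X$ yields convergence of the left-hand sides to $d(b_X(\mu),b_X(\eta))$. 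On the other side, $\lambda_\alpha(d)\to\lambda(d)$ since $d$ is bounded and continuous on $X\times X$, making $\nu\mapsto\nu(d)$ continuous on $\HP(X\times X)$. Taking the limit closes the argument.

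The chief obstacle is the approximation step: one must verify (or cite) that finitely supported probability measures are dense in $\HP(X\times X)$ in the topology in which both the barycenter map and the linear functional $\nu\mapsto\nu(d)$ are continuous. Once that density is in hand, the rest is routine; the substantive content of the proposition is just the convexity of $d$, packaged as Jensen's inequality for finite convex combinations.
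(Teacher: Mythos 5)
Your proof is correct and follows essentially the same route as the paper: Jensen's inequality for finite convex combinations (derived from the convexity of $d$) handles finitely supported measures, and the general case follows by density of finitely supported measures together with continuity of $b_X$ (Theorem~\ref{t3.2}, using that a barycentric set is bounded). The only cosmetic difference is that you approximate the optimal coupling $\lambda$ and use continuity of $\nu\mapsto\nu(d)$, whereas the paper approximates $\mu$ and $\eta$ directly by measures in $P_\omega(X)$ and relies on the continuity of $\hat d$ from Lemma~\ref{l4.2}; both limiting arguments are sound.
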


\begin{proof} For Dirac measures $\delta_x,\delta_y\in\HP(X)$ we have that
$d(b_X(\delta_y),b_X(\delta_x))=d(x,y)$. From the fact that $d$ is convex it follows that for any $n\in\IN$, any points $x_i,y_i\in X,\;1\le i\le n$, and numbers $t_i\in[0,1],\;1\le i\le n$, with $\sum_{i=1}^nt_i=1$ we get
$$d\big(\sum_{i=1}^nt_ix_i,\sum_{i=1}^nt_iy_i\big)\le\sum_{i=1}^nt_id(x_i,y_i).$$
Let us show that for any measures $\mu,\eta\in P_\omega(X)\subset\HP(X)$ with finite supports, $d(b_X(\mu),b_X(\eta))\le\hat d(\mu,\eta)$.
Indeed, let $\lambda\in\HP(X\times X)$ be  a measure such that
$\HP(pr_1)(\lambda)=\mu,\;\HP(pr_2)(\lambda)=\eta$ and $\lambda(d)=\hat
d(\mu,\eta)$. The measures $\mu$ and $\eta$ can be written as the convex combinations 
$\mu=\sum_{i=1}^n\alpha_i\delta_{x_i}$ and $\eta=\sum_{j=1}^m\beta_j
\delta_{y_j}$ of Dirac measures. In this case the measure $\lambda$ can be written as
$\lambda=\sum_{i=1,j=1}^{n\;\;\;m}\gamma_{ij}\delta_{(x_i,y_j)}$ where
$\sum_{j=1}^m\gamma_{ij}=\alpha_i$ and $\sum_{i=1}^n\gamma_{ij}=\beta_j$ for $1\le i\le n,\;1\le j\le m$. Then
\begin{multline*}
d\big(b_X\big(\sum_{i=1}^n\alpha_i\delta_{x_i}\big),b_X \big(\sum_{j=1}^m\beta_j
\delta_{y_j}\big)\big)=d\big(\sum_{i=1}^n\alpha_ix_i,\sum_{j=1}^m\beta_jy_j\big)=\\
d\big(\sum_{i=1,j=1}^{n\;\;\;m}\gamma_{ij}x_i,\sum_{i=1,j=1}^{n\;\;\;m}
\gamma_{ij}y_j\big)\le\sum_{i=1,j=1}^{n\;\;\;
m}\gamma_{ij}d(x_i,y_j)=\lambda(d)=\hat d(\mu,\eta).
\end{multline*}
Thus, the inequality $d(b_X(\mu),b_X(\eta))\le\hat d(\mu,\eta)$ holds for all measures
$\mu,\eta$ from the dense subset
$P_\omega(X)\subset\HP(X)$. By the continuity of the map $b_X:\HP(X)\to X$, the inequality
 $d(b_X(\mu),b_X(\eta))\le\hat d(\mu,\eta)$
holds for all measures $\mu,\eta\in\HP(X)$. 
\end{proof}

A connection between the metric and monadic structures of the function $\Pt$ is described in the following:

\begin{theorem}\label{t4.19} For any bounded metric space
$(X,d)$ the identity $\delta_X:X\to\Pt(X)$ of the monad $\Pt$ is a closed isometric embedding, and the multiplication
$\psi_X=b_{\Pt(X)}:\Pt^2(X)\to\Pt(X)$ is a non-expanding map.
Moreover, for every point $x\in X$ and a measure $\bold\mu\in\Pt^2(X)$, we get 
$d_{\Pt^2(X)}(\delta_{\delta_x},\bold\mu)=d_{\Pt(X)}(\delta_x,\psi_X(\bold\mu))$.
\end{theorem}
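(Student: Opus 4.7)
The plan is to push everything through the isometric embedding $\Pt(X)\hookrightarrow\Pt(X')=\HP(X')$ into the completion $(X',d')$, which is isometric by the very definition of $d_\tau$ and Remark~\ref{r4.12}. For the isometric-embedding claim, a transport plan $\lambda\in\HP(X'\times X')$ with first marginal $\delta_x$ must sit on $\{x\}\times X'$, and with second marginal $\delta_y$ must equal $\delta_{(x,y)}$; hence $d_\tau(\delta_x,\delta_y)=\hat d'(\delta_x,\delta_y)=d(x,y)$. For closedness of $\delta_X(X)$, suppose $\delta_{x_n}\to\mu$ in $(\Pt(X),d_\tau)$. Isometry makes $(x_n)$ a $d$-Cauchy sequence, so $x_n\to x'$ in the complete $X'$, giving $\delta_{x_n}\to\delta_{x'}$ in $\HP(X')$; uniqueness of limits forces $\mu=\delta_{x'}$. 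Were $x'\notin X$, the sets $U_n=X\cap O_{1/n}(x')$ would satisfy $\mu(U_n)=1$ while $\bigcap_n U_n=\emptyset$, contradicting $\sigma$-additivity of the $\tau$-smooth measure $\mu$. Hence $x'\in X$ and $\mu\in\delta_X(X)$.

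For non-expansiveness of $\psi_X$, I would apply Proposition~\ref{p4.18} to the convex barycentric set $\HP(X')\subset C_b^*(X')$, which is barycentric by Theorem~\ref{t3.15} since $X'$ is metrizable, and which carries the convex bounded metric $\hat d'$ by Proposition~\ref{p4.17}. This yields that $\psi_{X'}=b_{\HP(X')}:\HP^2(X')\to\HP(X')$ is non-expanding. Naturality of the monad multiplication with respect to the inclusion $X\hookrightarrow X'$ identifies $\psi_X$ with the restriction of $\psi_{X'}$ to $\Pt^2(X)\hookrightarrow\HP^2(X')$, an isometric inclusion by applying Proposition~\ref{p4.14} twice; hence $\psi_X$ is non-expanding.

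For the final equality, the ``$\geq$'' direction is immediate from non-expansiveness and the monad identity $\psi_X(\delta_{\delta_x})=\delta_x$. For ``$\leq$'', I would use the admissible coupling $\Lambda\in\Pt(\Pt(X)\times\Pt(X))$ obtained as the pushforward of $\bold\mu$ along the continuous map $\nu\mapsto(\delta_x,\nu)$; its marginals are $\delta_{\delta_x}$ and $\bold\mu$, and its cost equals $\int_{\Pt(X)}d_\tau(\delta_x,\nu)\,d\bold\mu(\nu)$. The same one-marginal-is-Dirac argument as in the first step gives $d_\tau(\delta_x,\nu)=\int_X d(x,y)\,d\nu(y)$, and the defining property of the barycenter $\psi_X(\bold\mu)(f)=\int_{\Pt(X)}\nu(f)\,d\bold\mu(\nu)$ applied to $f=d(x,\cdot)$ collapses the iterated integral to $\int_X d(x,y)\,d\psi_X(\bold\mu)(y)=d_\tau(\delta_x,\psi_X(\bold\mu))$.

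The most delicate step is the closedness claim: one must rule out ``phantom'' $\tau$-smooth measures on $X$ whose image in $\HP(X')$ is a Dirac concentrated at some $x'\in X'\setminus X$, which is where $\sigma$-additivity of $\tau$-smooth measures is essential. Everything else is a routine combination of transport-plan arguments, naturality of the monad, and the passage-to-the-completion framework already established in the paper.
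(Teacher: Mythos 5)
Your proof is correct, and on the key identity it takes a genuinely different route from the paper. For the first assertion your argument is essentially the paper's: the paper obtains non-expansiveness of $\psi_X$ from Propositions 4.17 and 4.18 together with the density of $\HP^2(X)$ in $\Pt^2(X)$, which is the same mechanism as your passage through the completion $X'$; you merely spell out the isometry computation $d_\tau(\delta_x,\delta_y)=d(x,y)$ and the closedness of $\delta_X(X)$ (via completeness of $X'$ and $\sigma$-additivity), details the paper leaves implicit. The real divergence is in the equality $d_{\Pt^2(X)}(\delta_{\delta_x},\bold\mu)=d_{\Pt(X)}(\delta_x,\psi_X(\bold\mu))$. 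The paper reduces by density to $\bold\mu\in P_\omega(P_\omega(X))$, notes that such a measure lies in $P(P(K))$ for a finite compact $K\subset X$, invokes Fedorchuk's lemma for compact metric spaces (the citation ``[22, \S 4]'' is evidently a slip for [14]), and transfers the result back via Proposition 4.14. You instead argue directly: since one marginal is a Dirac measure, the only admissible coupling of $\delta_{\delta_x}$ with $\bold\mu$ is the product, so $d_{\Pt^2(X)}(\delta_{\delta_x},\bold\mu)=\int_{\Pt(X)} d_\tau(\delta_x,\nu)\,d\bold\mu(\nu)$, and similarly $d_\tau(\delta_x,\eta)=\eta(d(x,\cdot))$ for every $\eta$; the barycentric identity $\psi_X(\bold\mu)(f)=\int\nu(f)\,d\bold\mu(\nu)$ applied to $f=d(x,\cdot)$ then collapses the iterated integral. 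Your version is self-contained, avoids the density reduction and the external reference, and as a bonus produces the explicit formula $d_\tau(\delta_x,\eta)=\eta(d(x,\cdot))$, whereas the paper's version outsources the computation to the compact case. Both arguments are valid.
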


\begin{proof} The first statement of the theorem follows from Propositions 4.17,
4.18 and the fact that the set $\HP^2(X)$ is dense in $P_\tau^2(X)$. The last statement of the theorem will follow as soon as we prove the equality 
$d_{\Pt^2(X)}(\delta_{\delta_x},\mathbf\mu)=d_{\Pt(X)}(\delta_x,\psi_X(\mathbf\mu))$
for any point $x\in X$ and any measure $\mu$  from the dense subset
$P_\omega(P_\omega(X))\subset\Pt(\Pt(X))$. In this case the support $\supp(\bold\mu)\subset P_\omega(X)$ of the measure $\bold\mu$ is finite and so is the set
$K=\{x\}\cup\{\supp(\eta)\mid\eta\in\supp(\bold\mu)\}\subset X$.
Observe that $\mathbf \mu\in P(P(K))\subset\Pt(\Pt(X))$. For the compact metric space 
$(K,d|K)$ the equality
$d_{P^2(K)}(\delta_{\delta_x},\bold\mu)=d_{P(K)}(\delta_x,\psi_K(\bold\mu))$
was proved in [22, \S 4, лемма 11]. Since the functor $\Pt$ preserves isometric embeddings, the last equality implies that
$d_{\Pt^2(X)}(\delta_{\delta_x},\bold\mu)=d_{\Pt(X)}(\delta_x,
\psi_X(\bold\mu))$. The theorem is proved.
\end{proof}

Now we will deal with the problem of lifting the functors $\HP,\Pt:\Tych\to\Tych$ to the category $\Unif$ of uniform spaces and their continuous maрs (see [8, chapter 8] for the basics of the theory of uniform spaces).

Let us recall that a map $f:(X,{\mathcal U})\to (Y,\V)$ between two uniform spaces is a {\em uniform homeomorphism} if $f$ is bijective and the maps $f$ and $f^{-1}$ are uniformly continuous. A map
$f:(X,{\mathcal U})\to (Y,\V)$ is called a {\em uniform embedding} if $f$ is a uniform homeomorphism of the space $(X,{\mathcal U})$ onto its image $f(X)\subset
(Y,\V)$.

We shall say that ${\mathcal U}$ is a {\em uniformity on a topological space} $X$ it generates the  topology of $X$. A pseudometric $d:X\times X\to\IR$ is called {\em $\mathcal U$-uniform} if for any $\e>0$ there exists a neighborhood of the diagonal $U\in {\mathcal U}$ such that $d(U)\subset [0,\e)$.

For a uniform space $(X,{\mathcal U})$ a family $\mathcal P$ of all $\mathcal U$-uniform pseudometrics on $X$  has the following properties:
\begin{itemize}
\itemindent=12pt
\item[(UP1)] if $\rho_1,\rho_2\in \mathcal P$, then $\max\{\rho_1,\rho_2\}\in \mathcal P$;
\item[(UP2)] for every pair $x,y$ of distinct points from $X$ there exists a
pseudometric $\rho\in \mathcal P$ such that $\rho(x,y)>0$.
\end{itemize}

Conversely, for every family $\mathcal P$ of pseudometrics on the set $X$ that satisfies (UP1)--(UP2), the family $\mathcal B$ of all neighborhoods of the diagonal of the form $\{(x,y)\mid \rho(x,y)<2^{-n}\}$, where $\rho\in P$ and
$n\in\IN$, forms a base for some uniformity on $X$.

Now, we will show how to construct a uniformity on the space $\Pt(X)$ given a uniformity ${\mathcal U}$ on the space $X$.

For every pseudometric $p$ on $X$ by $(X_p,d_p)$ we will denote the metric space induced by $p$, and by
$\pi_p:X\to X_p$ the corresponding quotient map. Every uniformity ${\mathcal U}$ on the space $X$ induces a uniform embedding
$e_{\mathcal U}:(X,{\mathcal U})\to \prod_{p\in\P({\mathcal U})}(X_p,d_p)$, defined by the formula
$e_{\mathcal U}(x)=(\pi_p(x))_{p\in\P({\mathcal U})}$, where $\P({\mathcal U})$ is the family of all $\mathcal U$-uniform bounded pseudometrics on $X$ (see [6, 8.2.2]).
The set $\P({\mathcal U})$ is equipped with the natural partial order $\le$.
One can easily observe that any two pseudometrics $\rho,\rho'\in\P({\mathcal U})$,
$\rho\le\rho'$, induce a non-expanding map $\pi_\rho^{\rho'}:(X_{\rho'}, d_{\rho'})\to (X_\rho,d_\rho)$. Evidently, the product $\prod_{p\in\P({\mathcal U})}X_p$ is homeomorphic (uniformly homeomorphic even) to the limit $\invlim X_\rho$ of the inverse system 
$\{X_\rho,\pi_\rho^{\rho'},\P({\mathcal U})\}$. Since the set $\mathcal P({\mathcal U})$ is directed (i.e. for any $\rho_1,\rho_2\in\P({\mathcal U})$ there exists $\rho\in\mathcal P({\mathcal U})$ such that
$\rho\ge \rho_1$ and $\rho\ge \rho_2$),  Proposition [1, 1.11] guarantees that the map $R:\Pt(\prod_{p\in\P({\mathcal U})}X_p)=\Pt(\invlim X_p)\to \invlim
\Pt(X_p)$ is a topological embedding. Therefore, the map
$R\circ \Pt(e_{\mathcal U}):\Pt(X)\to \invlim \Pt(X_p)$ is also
a topological embedding. On every space $\Pt(X_p)$ we
consider the uniformity induced by the metric $(d_p)_\tau$, which, by Proposition 4.10, is compatible with the topology of $\Pt(X_p)$.
Finally, let us equip the space $\Pt(X)$ with the uniformity ${\mathcal U}_\tau$
of the subspace of the inverse limit $\invlim \Pt(X_p)$ of uniform spaces $\Pt(X_p)$. The space $\HP(X)$ will be equipped with the uniformity
$\hat {\mathcal U}$ of the subspace of the uniform space $(\Pt(X),{\mathcal U}_\tau)$.

Thus for every uniformity ${\mathcal U}$ on the space $X$ we hace defined a uniformity ${\mathcal U}_\tau$ on the space $\Pt(X)$. One can easily observe that the uniformity ${\mathcal U}_\tau$ can be defined in a direct fashion. In particular, the base for that uniformity consists of the entourages 
$U=\{(\mu,\eta)\in\Pt(X)\times\Pt(X)\mid p_\tau(\mu,\eta)<2^{-n}\}$, where
$p\in\P({\mathcal U})$ and $n\in\IN$. Having chosen the roundabout way (using inverse limits), we didn't need to check that the family $\mathcal B$, indeed, satisfies the axioms of a base for a uniformity. (see [8, p.624]).
Moreover, we have also proved that if the uniformity ${\mathcal U}$ induces the topology of $X$, the uniformity ${\mathcal U}_\tau$ induces the topology ofn $\Pt(X)$.

\begin{proposition}\label{p4.20} Let  a family $\mathcal P$ of bounded pseudometrics on a uniform space $(X,{\mathcal U})$ satisfy the conditions (UP1)--(UP2). If the family of entourages $\mathcal B=\big\{\{(x,y)\in X^2: \rho(x,y)<2^{-n}\}:\rho\in \mathcal P,\;n\in\IN\big\}$ is a base of the uniformity ${\mathcal U}$, then the family $\mathcal B_\tau=\big\{\{(\mu,\eta)\in\Pt(X)^2:
\rho_\tau(\mu,\eta)<2^{-n}\}:\rho\in \mathcal P,\;n\in\IN\big\},$ is a base
for the uniformity ${\mathcal U}_\tau$ on $\Pt(X)$.
\end{proposition}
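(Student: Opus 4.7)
The plan is to show that the family $\mathcal B_\tau$ is both contained in $\mathcal U_\tau$ and cofinal in $\mathcal U_\tau$, hence is a base for $\mathcal U_\tau$. By construction, $\mathcal U_\tau$ admits the entourages $V_{p,\e}=\{(\mu,\eta)\in\Pt(X)^2\colon p_\tau(\mu,\eta)<\e\}$, $p\in\P(\mathcal U)$, $\e>0$, as a base. Since every $\rho\in\mathcal P$ is a bounded $\mathcal U$-uniform pseudometric (the sets $\{\rho<2^{-n}\}$ already lie in $\mathcal U$), we have $\mathcal P\subset\P(\mathcal U)$, so $\mathcal B_\tau\subset\mathcal U_\tau$. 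The nontrivial direction is to produce, for every $p\in\P(\mathcal U)$ and every $\e>0$, some $\rho\in\mathcal P$ and $n\in\IN$ with $\{\rho_\tau<2^{-n}\}\subset V_{p,\e}$.

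First I would translate the base-of-$\mathcal U$ hypothesis into a pointwise inequality between $p$ and a suitable $\rho\in\mathcal P$. Let $M=\diam(X,p)+1$. Since $\{p<\e/2\}\in\mathcal U$ and $\mathcal B$ is a base for $\mathcal U$, there exist $\rho\in\mathcal P$ and $n_0\in\IN$ with $\{\rho<2^{-n_0}\}\subset\{p<\e/2\}$. If $\rho(x,y)\ge 2^{-n_0}$ then $p(x,y)\le M\le M\cdot 2^{n_0}\rho(x,y)$, while if $\rho(x,y)<2^{-n_0}$ then $p(x,y)<\e/2$; in both cases
$$p(x,y)\le C\rho(x,y)+\e/2,\qquad C:=M\cdot 2^{n_0},$$
for every $(x,y)\in X\times X$.

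The main step is to lift this pointwise estimate to $p_\tau\le C\rho_\tau+\e/2$ on $\Pt(X)\times\Pt(X)$. I would first treat finitely supported measures $\mu=\sum_i\alpha_i\delta_{x_i}$, $\eta=\sum_j\beta_j\delta_{y_j}$ in $P_\omega(X)\subset\HP(X)$: any coupling $\lambda=\sum_{ij}\gamma_{ij}\delta_{(x_i,y_j)}$ is automatically supported in the finite set $\supp(\mu)\times\supp(\eta)$, belongs to $\HP(X\times X)$, and integrates the pointwise inequality to $\lambda(p)\le C\lambda(\rho)+\e/2$. Remark 4.12 identifies $p_\tau(\mu,\eta)$ and $\rho_\tau(\mu,\eta)$ with the Kantorovich infima of $\lambda(p)$ and $\lambda(\rho)$ over exactly this set of couplings; taking infima on the right yields $p_\tau(\mu,\eta)\le C\rho_\tau(\mu,\eta)+\e/2$ on $P_\omega(X)^2$. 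Since $P_\omega(X)$ is dense in $\Pt(X)$ (from [1]) and both $p_\tau,\rho_\tau$ are continuous on $\Pt(X)^2$ by Proposition 4.9, the inequality extends to all of $\Pt(X)\times\Pt(X)$. Choosing $n\in\IN$ with $C\cdot 2^{-n}<\e/2$ then gives $\{\rho_\tau<2^{-n}\}\subset V_{p,\e}$, as required.

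I expect the main obstacle to be the passage from the pointwise inequality $p\le C\rho+\e/2$ to the lifted inequality $p_\tau\le C\rho_\tau+\e/2$: since $p_\tau$ is defined indirectly via the metric completion $X'_p$ rather than as an explicit coupling infimum over $\Pt(X\times X)$, one cannot integrate both sides at once. Routing through finite-support measures, where Remark 4.12 and the elementary linear-programming argument apply, and then using continuity plus density is the cleanest workaround I see.
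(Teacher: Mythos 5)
Your proof is correct and follows essentially the same route as the paper's: reduce to pairs of finitely supported measures, apply the coupling characterization from Lemma 4.1 and Remark 4.12, and extend by density of $P_\omega(X)$ in $\Pt(X)$ together with continuity of the pseudometrics $p_\tau$ and $\rho_\tau$. The only difference is cosmetic: you integrate a global affine bound $p\le C\rho+\e/2$ against the coupling, whereas the paper splits $\lambda(\rho)$ over $\{\rho<\tfrac13\}$ and its complement and bounds the latter by a Chebyshev-type estimate; both computations deliver the same inclusion of entourages.
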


\begin{proof} First, let us note that since very pseudometric $\rho\in P$ is $\mathcal U$-uniform, every entourage $V\in \mathcal B_\tau$ belongs to ${\mathcal U}_\tau$ according to the definition of the uniformity ${\mathcal U}_\tau$.

Now fix a neighborhood of the diagonal $V\in{\mathcal U}_\tau$. The definition of the uniformity ${\mathcal U}_\tau$ implies that there exists a bounded pseudometric
$\rho\in\P({\mathcal U})$ such that $\{(\mu,\eta)\in \Pt(X)^2:
\rho_\tau(\mu,\eta)<1\}\subset V$. Let $D=\sup\{\rho(x,y)\mid x,y\in
X\}+1$ and $U=\{(x,y)\mid \rho(x,y)<\frac 13\}$. As $\mathcal B$ is a base for the uniformity ${\mathcal U}$, there exist $p\in \mathcal P$ and $n\in\IN$ such that
$W=\{(x,y):X^2\mid p(x,y)<2^{-n}\}\subset U$.

We are going to show that $\W=\{(\mu,\eta)\in\Pt(X)^2: p_\tau(\mu,\eta)<2^{-(n+1)}/D\}\subset
{\mathcal A}=\{(\mu,\eta)\in\Pt(X)^2: \rho_\tau(\mu,\eta)\le \frac 56\}$. Since the set $\mathcal A$ is closed in $\Pt(X)\times \Pt(X)$, and the set $\W$ is open, the inclusion $\W\subset \mathcal A$ follows from the inclusion
$(\mu,\eta)\in \mathcal A$ for any pair $(\mu,\eta)\in\W$ of measures with finite supports. Fix a pair $(\mu,\eta)\in\W$ of measures with finite supports. By Lemma 4.1 and Remark 4.12, there exists a measure $\lambda \in
P_\omega(X\times X)$ such that $\Pt(\pr_1)(\lambda )=\mu$, $\Pt(\pr_2)(\lambda
)=\eta$ and $\lambda (p)=p_\tau(\mu,\eta)=\int_{X\times X}pd\lambda
<2^{-(n+1)}/D$. Taking into account that the value of the function $p$ on the set $(X\times
X)\bs W$ is not less than $2^{-n}$, we conclude that $2^{-(n+1)}/D>\int_{X\times X}p\,d\lambda
\ge \int_{(X\times X)\bs W}p\,d\lambda \ge 2^{-n}\lambda ((X\times X)\bs
W)$, which implies that $\lambda ((X\times X)\bs U)\le \lambda ((X\times
X)\bs W)<2^n\cdot 2^{-(n+1)}/D=\frac 1{2D}$. Then,
$\rho_\tau(\mu,\eta)\le\lambda (\rho)=\int_{X\times X}\rho\, d\lambda
=\int_{(X\times X)\bs U}\rho\,d\lambda +\int_U\rho\, d\lambda \le D\cdot
\lambda ((X\times X)\bs U)+\frac 13\lambda (U)\le \frac 12+\frac 13=\frac
56$. The proposition is proved.
\end{proof}

\begin{corollary}\label{c4.21} Let $(X,d)$ be a bounded metric space and ${\mathcal U}$ be a uniformity on $X$ induced by the metric $d$.
Then the uniformity on $\Pt(X)$ induced by the metric $d_\tau$ coincides with the uniformity ${\mathcal U}_\tau$.
\end{corollary}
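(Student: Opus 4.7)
The plan is to derive this corollary as a direct specialization of Proposition 4.20. The singleton family $\mathcal P=\{d\}$ already satisfies the hypotheses of that proposition: axiom (UP1) is trivial since $\max\{d,d\}=d$, and axiom (UP2) holds because $d$ is a metric, so distinct points have positive distance. Since $d$ generates $\mathcal U$ by assumption, $d$ is itself $\mathcal U$-uniform, so $\mathcal P\subset \mathcal P(\mathcal U)$.

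Next I would observe that the family $\mathcal B=\big\{\{(x,y)\in X^2:d(x,y)<2^{-n}\}:n\in\IN\big\}$ is the standard base of the metric uniformity on $(X,d)$, and by hypothesis this coincides with $\mathcal U$. Hence the hypotheses of Proposition~\ref{p4.20} are all met, and the conclusion delivers that $\mathcal B_\tau=\big\{\{(\mu,\eta)\in\Pt(X)^2:d_\tau(\mu,\eta)<2^{-n}\}:n\in\IN\big\}$ is a base for $\mathcal U_\tau$.

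Finally, I would note that $\mathcal B_\tau$ is also, by definition, the canonical base of the uniformity on $\Pt(X)$ induced by the metric $d_\tau$ (which is a bona fide compatible metric on $\Pt(X)$ by Proposition~\ref{p4.10}). Two uniformities sharing a common base are equal, which concludes the proof.

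I do not anticipate any real obstacle here: the work has been done in Proposition~\ref{p4.20}, and Corollary~\ref{c4.21} is essentially a bookkeeping step confirming that the abstract construction of $\mathcal U_\tau$ via inverse limits agrees with the naive metric-uniformity construction in the special case when $\mathcal U$ itself comes from a single bounded metric.
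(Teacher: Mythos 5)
Your derivation is correct and matches the paper's intent exactly: Corollary~\ref{c4.21} is stated immediately after Proposition~\ref{p4.20} with no separate proof, precisely because it is the specialization of that proposition to the singleton family $\mathcal P=\{d\}$, as you carry out. The verification of (UP1)--(UP2) and the identification of $\mathcal B_\tau$ with the standard base of the $d_\tau$-metric uniformity are exactly the routine bookkeeping the paper leaves to the reader.
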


The idea of the proof of the following proposition belongs to Yu. Sadovnichiy.

\begin{proposition}\label{p4.22} If $f:(X,{\mathcal U})\to (Y,\V)$ is a uniformly continuous map, then the map $\Pt(f):(\Pt(X),{\mathcal U}_\tau)\to
(\Pt(Y),\V_\tau)$ is also uniformly continuous.
\end{proposition}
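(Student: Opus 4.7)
The plan is to reduce uniform continuity of $\Pt(f)$ to the preservation of isometric embeddings by $\Pt$ (Proposition 4.14), via pulling back pseudometrics along $f$.

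By Proposition 4.20 it suffices to test uniform continuity of $\Pt(f)$ against the canonical base of $\V_\tau$ described there. So I would fix a bounded $\V$-uniform pseudometric $q$ on $Y$ together with $n\in\IN$ and look for a bounded $\mathcal U$-uniform pseudometric $p$ on $X$ for which $p_\tau(\mu,\eta)<2^{-n}$ forces $q_\tau(\Pt(f)(\mu),\Pt(f)(\eta))<2^{-n}$. The natural candidate is the pullback $p(x,x'):=q(f(x),f(x'))$; uniform continuity of $f$ together with $\V$-uniformity of $q$ immediately give that $p$ is a bounded $\mathcal U$-uniform pseudometric on $X$, so $p\in\P({\mathcal U})$.

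By construction $f$ descends to a canonical isometric embedding $\bar f:(X_p,d_p)\to(Y_q,d_q)$ of quotient metric spaces, fitting into the commuting square $\pi_q\circ f=\bar f\circ\pi_p$. Since $\bar f$ is uniformly continuous and $(Y'_q,d'_q)$ is complete, $\bar f$ extends uniquely to $\bar f':(X'_p,d'_p)\to(Y'_q,d'_q)$, and this extension remains an isometric embedding by density of $X_p$ in $X'_p$. Applying Proposition 4.14 to $\bar f'$ yields an isometric embedding $\Pt(\bar f'):(\Pt(X'_p),\hat d'_p)\to(\Pt(Y'_q),\hat d'_q)$. A short diagram chase, using functoriality of $\Pt$ together with the commuting square above lifted to the completions, and unfolding the definitions of $p_\tau$ and $q_\tau$, should then give
$$q_\tau(\Pt(f)(\mu),\Pt(f)(\eta))=p_\tau(\mu,\eta)\qquad(\mu,\eta\in\Pt(X)),$$
from which the required uniform continuity of $\Pt(f)$ follows, with the same constant $2^{-n}$ on both sides.

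I do not foresee a serious obstacle: the only point requiring real care is verifying that the pullback $p$ is genuinely $\mathcal U$-uniform (a direct consequence of the $\e$-$\delta$ formulations of uniform continuity of $f$ and $\V$-uniformity of $q$), and the diagram chase in the last step is essentially forced once one has $\Pt(\bar f')$ isometric.
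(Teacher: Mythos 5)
Your proposal is correct and follows essentially the same route as the paper: the paper likewise pulls the pseudometric back along $f$ (setting $\rho=p\circ(f\times f)$, which is $\mathcal U$-uniform by uniform continuity of $f$) and concludes from the identity $\rho_\tau(\mu,\eta)=p_\tau(\Pt(f)(\mu),\Pt(f)(\eta))$ that basic entourages of $\V_\tau$ pull back to basic entourages of ${\mathcal U}_\tau$. The only difference is that the paper asserts this identity directly from the definition of $p_\tau$, whereas you justify it in detail through the induced isometric embedding of quotient completions and Proposition 4.14 --- a correct and slightly more explicit verification of the same step.
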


\begin{proof} Let $f:(X,{\mathcal U})\to (Y,\V)$ be a uniformly continuous map.
As we have mentioned before, the basis for the uniformity $(\Pt(Y),\V_\tau)$ is the family of sets of the form $U^\e_p=\{(\mu,\eta)\in\Pt(Y)\times \Pt(Y)\mid p_\tau(\mu,\eta)<\e\}$,
where $\e>0$ and $p\in\P({\mathcal U})$. Fix an $\e>0$ and $p\in\P({\mathcal U})$.
Since the map $f:(X,{\mathcal U})\to (Y,\V)$ is uniformly continuous, the pseudometric
$\rho=p\circ (f\times f)$ is uniform with regard to ${\mathcal U}$. Let us note that the definition of the pseudometric $p_\tau$ implies that for any $\mu,\eta\in\Pt(X)$,
\ $\rho_\tau(\mu,\eta)=p_\tau(\Pt(f)(\mu),\Pt(f)(\eta))$. Therefore,
$(\Pt(f)\times \Pt(f))^{-1}(U^\e_p)=U^\e_\rho=\{(\mu,\eta)\in\Pt(X)\times\Pt(X)
\mid \rho_\tau(\mu,\eta)<\e\}\in {\mathcal U}_\tau$, i.e. the map
$\Pt(f):(\Pt(X),{\mathcal U}_\tau)\to (\Pt(Y),\V_\tau)$ is uniformly continuous. The Proposition is proved.
\end{proof}

Using Proposition 4.22 we can prove:

\begin{theorem}\label{t4.23} The functors $\Pt:\Tych\to\Tych$ and $\HP:\Tych\to
\Tych$ can be lifted to the category $\Unif$ of uniform spaces and their uniformly continuous maps.
\end{theorem}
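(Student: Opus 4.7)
The plan is simply to assemble the pieces that have already been constructed and proved. The object assignment of the lifted functor $\tilde\Pt:\Unif\to\Unif$ is precisely the one built in the paragraph preceding Proposition 4.20: to a uniform space $(X,{\mathcal U})$ I would associate $(\Pt(X),{\mathcal U}_\tau)$, where ${\mathcal U}_\tau$ is the uniformity inherited as a subspace of $\invlim_{p\in\P({\mathcal U})}\Pt(X_p)$ (each $\Pt(X_p)$ carrying the uniformity induced by the metric $(d_p)_\tau$). For $\tilde\HP$ I would equip $\HP(X)\subset\Pt(X)$ with the subspace uniformity $\hat{\mathcal U}$ of $(\Pt(X),{\mathcal U}_\tau)$.

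On morphisms I would define $\tilde\Pt(f)=\Pt(f)$ and $\tilde\HP(f)=\HP(f)$ for every uniformly continuous $f:(X,{\mathcal U})\to(Y,{\mathcal V})$. The decisive input is Proposition 4.22, which already gives that $\Pt(f):(\Pt(X),{\mathcal U}_\tau)\to(\Pt(Y),{\mathcal V}_\tau)$ is uniformly continuous. For the $\HP$-case, $\HP(f)$ is nothing but the restriction of $\Pt(f)$ to $\HP(X)\subset\Pt(X)$, with image inside $\HP(Y)\subset\Pt(Y)$; since $\hat{\mathcal U}$ and $\hat{\mathcal V}$ are by construction the subspace uniformities, uniform continuity transfers to $\HP(f):(\HP(X),\hat{\mathcal U})\to(\HP(Y),\hat{\mathcal V})$ for free.

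The remaining verifications are formal. Preservation of identities and of composition is inherited from the fact that $\Pt$ and $\HP$ are already functors on $\Tych$, because $\tilde\Pt$ and $\tilde\HP$ act on underlying sets and maps in the same way. The compatibility with the forgetful functor $U:\Unif\to\Tych$, that is $U\circ\tilde\Pt=\Pt\circ U$ and $U\circ\tilde\HP=\HP\circ U$, is exactly the assertion that the topology generated by ${\mathcal U}_\tau$ (respectively $\hat{\mathcal U}$) coincides with the native topology on $\Pt(X)$ (respectively $\HP(X)$); this was already recorded at the end of the construction preceding Proposition 4.20, and rests on Proposition 4.10 (each $(d_p)_\tau$ is compatible with the topology on $\Pt(X_p)$) combined with Proposition [1,1.11] (which ensures $\Pt(X)\hookrightarrow\invlim\Pt(X_p)$ is a topological embedding).

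I do not foresee any genuine obstacle at this stage: the hard work was done in proving the uniform continuity statement of Proposition 4.22 and in the concrete description of ${\mathcal U}_\tau$ given by Proposition 4.20. The present theorem is therefore essentially a packaging statement, and the proof will amount to writing down the object/morphism assignments above and citing Proposition 4.22 for well-definedness on arrows, the construction preceding Proposition 4.20 for topological compatibility, and the functoriality of $\Pt,\HP:\Tych\to\Tych$ for the categorical axioms.
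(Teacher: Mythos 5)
Your proposal is correct and matches the paper's intent exactly: the paper gives no further argument beyond the phrase ``Using Proposition 4.22 we can prove,'' relying on the construction of ${\mathcal U}_\tau$ (and of $\hat{\mathcal U}$ as the subspace uniformity on $\HP(X)$) carried out before Proposition 4.20 together with the topological compatibility recorded there. You have simply written out the routine packaging that the paper leaves implicit.
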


One can immediately prove

\begin{proposition}\label{p4.24} For every uniform space
$(X,{\mathcal U})$ the map $\delta :(X,{\mathcal U})\to(\HP(X),\hat {\mathcal U})\subset
(\Pt(X),{\mathcal U}_\tau)$, $\delta :x\mapsto \delta _x$, $x\in X$, is a uniform embedding.
\end{proposition}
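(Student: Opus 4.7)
The plan is to exhibit matching bases of $\mathcal U$ and $\mathcal U_\tau$ that are carried to each other by the Dirac map, which will simultaneously give uniform continuity of $\delta$ and uniform continuity of its inverse on the image.

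The key computation I would carry out first is the identity $\rho_\tau(\delta_x,\delta_y)=\rho(x,y)$ for every bounded $\mathcal U$-uniform pseudometric $\rho\in\P(\mathcal U)$ and all $x,y\in X$. Indeed, by the definition of $\rho_\tau$ given just before Proposition~4.9, one has $\Pt(\pi_\rho)(\delta_x)=\delta_{\pi_\rho(x)}$ and $\Pt(\pi_\rho)(\delta_y)=\delta_{\pi_\rho(y)}$, so $\rho_\tau(\delta_x,\delta_y)=\hat d'_\rho(\delta_{\pi_\rho(x)},\delta_{\pi_\rho(y)})$; and for Dirac measures on any metric space the only admissible coupling in the definition of $\hat d'_\rho$ is the Dirac measure at the pair, which yields $\hat d'_\rho(\delta_a,\delta_b)=d'_\rho(a,b)$. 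Since $d'_\rho(\pi_\rho(x),\pi_\rho(y))=\rho(x,y)$ by construction, the identity follows.

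Now I would invoke Proposition~4.20: a base for the uniformity $\mathcal U_\tau$ consists of the entourages $V_\rho^n=\{(\mu,\eta)\in\Pt(X)^2:\rho_\tau(\mu,\eta)<2^{-n}\}$ with $\rho\in\P(\mathcal U)$ and $n\in\IN$, while the entourages $U_\rho^n=\{(x,y)\in X^2:\rho(x,y)<2^{-n}\}$ form a base of $\mathcal U$ (this is how $\mathcal U$ is expressed in terms of its uniform pseudometrics, cf.~the paragraph following (UP1)--(UP2)). The identity from the previous step gives
\[
(\delta\times\delta)^{-1}(V_\rho^n)=U_\rho^n
\quad\text{and}\quad
(\delta\times\delta)(U_\rho^n)=V_\rho^n\cap(\delta(X)\times\delta(X)),
\]
so preimages of basic entourages are basic entourages, and images of basic entourages trace out the basic entourages of the subspace uniformity on $\delta(X)$. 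Together with the obvious injectivity of $\delta$ (which follows from the Hausdorff property of the Tychonoff space $X$), this shows that $\delta$ is a uniform homeomorphism onto its image in $(\Pt(X),\mathcal U_\tau)$.

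Finally, since $\hat{\mathcal U}$ is by definition the subspace uniformity induced on $\HP(X)\subset\Pt(X)$ from $\mathcal U_\tau$, and since $\delta_x\in\HP(X)$ for every $x\in X$, the factored map $\delta:(X,\mathcal U)\to(\HP(X),\hat{\mathcal U})$ is likewise a uniform embedding. No real obstacle arises here; the whole argument is essentially the observation that Proposition~4.20 produces a base of $\mathcal U_\tau$ whose trace on the Dirac copy of $X$ is exactly a base of $\mathcal U$.
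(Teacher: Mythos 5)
Your proof is correct: the identity $\rho_\tau(\delta_x,\delta_y)=\rho(x,y)$ for $\mathcal U$-uniform bounded pseudometrics (via the fact that the unique coupling of two Dirac measures is the Dirac measure at the pair), combined with the description of a base of $\mathcal U_\tau$ by the entourages $\{(\mu,\eta):\rho_\tau(\mu,\eta)<2^{-n}\}$, immediately yields that $\delta$ carries a base of $\mathcal U$ onto the trace of a base of $\mathcal U_\tau$ and back. The paper offers no written proof (it declares the statement immediate), and your argument is precisely the natural verification one would supply.
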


Since every bounded uniformly continuous pseudometric on a subspace $Y$ of a uniform space $(X,{\mathcal U})$ can be extended to a $\mathcal U$-uniform bounded pseudometric on $X$ (см. [15],
or [8, 8.5.6]), we have the following

\begin{proposition}\label{p4.25} For every uniform embedding
$f:(X,{\mathcal U})\to(Y,\V)$ the map $\Pt(f):(\Pt(X),{\mathcal U}_\tau)\to
(\Pt(Y),\V_\tau)$ is also a uniform embedding.
\end{proposition}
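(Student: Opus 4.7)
The plan is to reduce to the case where $f$ is the inclusion of a uniform subspace and then exhibit every basic entourage of ${\mathcal U}_\tau$ as the trace on $\Pt(X)\times\Pt(X)$ of a basic entourage of $\V_\tau$. Since $f$ is a uniform embedding, I would identify $X$ with $f(X)\subset Y$ and treat ${\mathcal U}$ as the subspace uniformity inherited from $\V$. By Proposition~4.22 the map $\Pt(f):\Pt(X)\to\Pt(Y)$ is uniformly continuous, and since $\Pt$ preserves topological embeddings, $\Pt(f)$ is injective and topologically embeds $\Pt(X)$ into $\Pt(Y)$. What remains is to verify that the uniformity ${\mathcal U}_\tau$ on $\Pt(X)$ coincides with the one induced from $\V_\tau$ through $\Pt(f)$; only the inclusion of ${\mathcal U}_\tau$ into this induced uniformity is non-trivial.

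By Proposition~4.20 a base of ${\mathcal U}_\tau$ is formed by entourages $U_p^\e=\{(\mu,\eta)\in\Pt(X)^2:p_\tau(\mu,\eta)<\e\}$ with $p\in\P({\mathcal U})$ and $\e>0$. For each such $p$ I would apply the pseudometric extension property recalled just before the statement to obtain a $\V$-uniform bounded pseudometric $\tilde p$ on $Y$ with $\tilde p|_{X\times X}=p$. Write $\pi_p:X\to X_p$ and $\pi_{\tilde p}:Y\to Y_{\tilde p}$ for the canonical quotients onto the associated metric quotients, and let $(X_p',d_p')$ and $(Y_{\tilde p}',d_{\tilde p}')$ be their completions. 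Because $\tilde p$ extends $p$, the rule $\pi_p(x)\mapsto\pi_{\tilde p}(x)$ is a well-defined isometric embedding $j:X_p\hookrightarrow Y_{\tilde p}$, which extends uniquely to an isometric embedding $j':X_p'\hookrightarrow Y_{\tilde p}'$ of the completions.

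The central step will be the identity $p_\tau(\mu,\eta)=\tilde p_\tau(\mu,\eta)$ for all $\mu,\eta\in\Pt(X)$. By functoriality, and because the measure $\mu$ is concentrated on $X$, one has $\Pt(\pi_{\tilde p})(\mu)=\Pt(j)(\Pt(\pi_p)(\mu))$, and similarly for $\eta$. By Proposition~4.14 the map $\Pt(j'):(\Pt(X_p'),\hat d_p')\to(\Pt(Y_{\tilde p}'),\hat d_{\tilde p}')$ is an isometric embedding, so unwinding the definitions of $p_\tau$ and $\tilde p_\tau$ gives $p_\tau(\mu,\eta)=\hat d_p'(\Pt(\pi_p)(\mu),\Pt(\pi_p)(\eta))=\hat d_{\tilde p}'(\Pt(\pi_{\tilde p})(\mu),\Pt(\pi_{\tilde p})(\eta))=\tilde p_\tau(\mu,\eta)$. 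This identifies $U_p^\e$ with $\{(\mu,\eta)\in\Pt(Y)^2:\tilde p_\tau(\mu,\eta)<\e\}\cap\Pt(X)^2$, finishing the argument for $\Pt$. The corresponding statement for $\HP$ follows at once, since by construction $\HP(X)$ and $\HP(Y)$ carry the subspace uniformities of $(\Pt(X),{\mathcal U}_\tau)$ and $(\Pt(Y),\V_\tau)$ respectively.

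The hard part will be the equality $p_\tau=\tilde p_\tau|_{\Pt(X)\times\Pt(X)}$: one must promote an ordinary extension of a pseudometric on the base space to a coincidence of the induced Kantorovich-type metrics on the spaces of measures, and this rests squarely on the preservation of isometric embeddings by $\Pt$ (Proposition~4.14).
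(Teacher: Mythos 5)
Your argument is correct and takes the same route the paper indicates: the paper derives Proposition~4.25 precisely from the fact that every bounded uniformly continuous pseudometric on a subspace extends to a uniform bounded pseudometric on the ambient space, and your key identity $p_\tau=\tilde p_\tau$ on $\Pt(X)\times\Pt(X)$ (obtained via Proposition~4.14 applied to the induced isometric embedding of the metric quotients and their completions) is exactly the step the paper leaves implicit. Nothing to correct.
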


It is well-known  (see [8, гл.8]) that  a complete uniform space is compact if and only if it is totally bounded. 

\begin{proposition}\label{p4.26} A uniform space $(X,{\mathcal U})$ is totally bounded if and only if the space $(\Pt(X), {\mathcal U}_\tau)$ is totally bounded too.
\end{proposition}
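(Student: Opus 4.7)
The plan is to handle each direction through the basis description of $\mathcal{U}_\tau$ given in Proposition 4.20. Recall that a uniform space $(Y,\V)$ is totally bounded iff for every entourage $V\in\V$ there is a finite subset $F\subset Y$ with $Y=\bigcup_{y\in F}V[y]$; equivalently, for every $\V$-uniform bounded pseudometric $\rho$ and every $\e>0$ there is a finite $\rho$-$\e$-net in $Y$.

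The easy direction is immediate: if $(\Pt(X),\mathcal{U}_\tau)$ is totally bounded, then by Proposition 4.24 the Dirac map $\delta:(X,\mathcal{U})\to(\Pt(X),\mathcal{U}_\tau)$ is a uniform embedding, and since every subspace of a totally bounded uniform space is totally bounded, $(X,\mathcal{U})$ is totally bounded.

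For the converse, assume $(X,\mathcal{U})$ is totally bounded. By Proposition 4.20 a base of $\mathcal{U}_\tau$ consists of the entourages $V_{p,\e}=\{(\mu,\eta)\in\Pt(X)^2:p_\tau(\mu,\eta)<\e\}$ with $p\in\P(\mathcal{U})$ and $\e>0$, so it suffices to fix such $p$ and $\e$ and produce a finite $p_\tau$-$\e$-net in $\Pt(X)$. Since $\{(x,y):p(x,y)<\e'\}\in\mathcal{U}$ for every $\e'>0$, total boundedness of $(X,\mathcal{U})$ forces the quotient metric space $(X_p,d_p)$ to be totally bounded, so its completion $(X_p',d_p')$ is a compact metric space. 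By Lemma 4.3 the metric $\hat d_p'$ is compatible on $\Pt(X_p')=\HP(X_p')$, and since $X_p'$ is compact this space is itself compact, hence totally bounded in $\hat d_p'$. The very definition of $p_\tau$ says that $\Pt(\pi_p):(\Pt(X),p_\tau)\to(\Pt(X_p'),\hat d_p')$ preserves distances, so its image is totally bounded and admits a finite $\e$-net $\{\nu_1,\dots,\nu_n\}\subset\Pt(\pi_p)(\Pt(X))$. Choosing preimages $\mu_i\in\Pt(X)$ of the $\nu_i$ yields a finite $p_\tau$-$\e$-net in $\Pt(X)$, as required.

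The only non-routine point is the reduction to the compact metric case $X_p'$, and within that the key observation is that total boundedness of the uniformity $\mathcal{U}$ entails total boundedness of each pseudometric space $(X,p)$ with $p\in\P(\mathcal{U})$, simply because the entourage $\{p<\e'\}$ belongs to $\mathcal{U}$. Once $X_p'$ is compact, the compactness of $P(X_p')$ combined with Lemma 4.3 finishes the argument, while the easy direction is a direct application of Proposition 4.24.
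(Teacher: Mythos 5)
Your argument is correct, but for the nontrivial direction it follows a genuinely different route from the paper's. The paper argues globally: it passes to the completion $(\tilde X,\tilde{\mathcal U})$ of the totally bounded space $(X,{\mathcal U})$, which is compact, and then invokes Proposition 4.25 (the functor $\Pt$ preserves uniform embeddings) to embed $(\Pt(X),{\mathcal U}_\tau)$ uniformly into the compact space $\Pt(\tilde X)=P(\tilde X)$, whence total boundedness is inherited by the subspace. You instead argue entourage by entourage: for each $p\in\P({\mathcal U})$ and $\e>0$ you note that total boundedness of ${\mathcal U}$ forces the quotient $(X_p,d_p)$ to be totally bounded (since $\{p<\e'\}\in{\mathcal U}$), so its completion $X_p'$ is compact; then, since $p_\tau$ is \emph{by definition} the pullback of $\hat d_p'$ under $\Pt(\pi_p)$, a finite $\e$-net chosen inside the totally bounded image $\Pt(\pi_p)(\Pt(X))\subset P(X_p')$ pulls back to a finite $p_\tau$-$\e$-net in $\Pt(X)$. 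Both proofs ultimately rest on the compactness of $P$ of a compact space, but yours bypasses Proposition 4.25 (and the Isbell pseudometric-extension theorem hiding behind it), working only from the definition of $p_\tau$ and the basis description of ${\mathcal U}_\tau$ preceding Proposition 4.20, at the cost of a somewhat longer per-pseudometric reduction; the easy direction via Proposition 4.24 is identical in the two arguments.
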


\begin{proof} Since the space $(X,{\mathcal U})$ can be uniformly embedded in $(\Pt(X),{\mathcal U}_\tau)$, the fact that
$(\Pt(X),{\mathcal U}_\tau)$ is totally bounded implies that the space $(X,{\mathcal U})$ is also totally bounded.

Now, if $(X,{\mathcal U})$ is a totally bounded uniform space,
its completion $(\tilde X,\tilde{\mathcal U})$ is compact. By Proposition 4.26,
the space $(\Pt(X),{\mathcal U}_\tau)$ can be uniformly embedded in the compact space $\Pt(\tilde X)=P(\tilde X)$ and, therefore, is a totally bounded uniform space. The Proposition is proved.
\end{proof}

\begin{remark}\label{r4.27} Despite the fact that the functor $\HP$ preserves complete metric spaces, it does not, generally speaking, preserve complete uniform spaces. This follows form the fact that for an uncountable set $A$ the uniform space $(\HP(\IR^A),\hat{\mathcal U})$ is not complete (here ${\mathcal U}$ is the product uniformity on $\IR^A$~). Indeed, let
$\mu\in\HP(\IR)$ be an arbitrary measure on $\IR$ with a non-compact support. For every finite $B\subset A$ let $\mu_B=\otimes_{\alpha \in
A}\mu_\alpha \in\HP(\IR^A)$, where
$$
\mu_\alpha =\begin{cases} \mu,&\mbox{ for } \alpha \in B;\\
\delta _0,&\mbox{ for } \alpha \notin B.\end{cases}
$$
It can be shown that $\{\mu_\alpha \}_{B\subset A}$ is a Cauchy net in
$\HP(\IR^A)$, which does not have an accumulation point.

The author is unaware of whether the functor $\Pt$ preserves complete uniform spaces${}^1$.
\end{remark}

Let us recall that a topological space $X$ is called ({\em Hewitt}) {\em Dieudonne complete} if
$X$ is homeomorphic to a closed subset of the Tychonov product of (separable) complete metric spaces.
 It is known [8] that every Lindelof space is Hewitt complete, and every metrizable space is Diudonne complete. 

\begin{problem}\label{q4.28}Do the functors $\Pt$ and $\HP$ preserve Hewitt or Diudonne complete spaces?\footnote{The answer to this question is given in the following articles:\newline
$\bullet$ V.V.Fedorchuk {\em On completeness-type properties of spaces of $\tau$-additive probability measures} (Russian) Vestnik Moskov. Univ. Ser. I Mat. Mekh. 1998, no. 5, 19--22, 71; transl. in Moscow Univ. Math. Bull. {\bf 53} (1998), no. 5, 20--23 (1999).\newline
$\bullet$ V.V. Fedorchuk V.V., {\em On a preservation of completeness of uniform spaces by the functor $P_\tau$}, Topology Appl. {\bf 91}:1 (1999) 25-45.\newline 
$\bullet$ V.V.Fedorchuk, {\em On the topological completeness of spaces of measures}. (Russian) Izv. Ross. Akad. Nauk Ser. Mat. 63 (1999), no. 4, 207--223; translation in Izv. Math. 63 (1999), no. 4, 827--843}  
\end{problem}

We will finish this section with the following simple statement, which follows from Proposition 4.17.

\begin{proposition}\label{p4.29} For every uniform space
$(X,{\mathcal U})$ the map $$\alpha :(\Pt(X),{\mathcal U}_\tau)\times
(\Pt(X),{\mathcal U}_\tau)\times [0,1]\to (\Pt(X),{\mathcal U}_\tau),\;\;\alpha (\mu,\eta,t)=t\mu+(1-t)\eta,$$ is uniformly continuous.
\end{proposition}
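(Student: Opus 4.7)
The plan is to exploit the convexity of $p_\tau$ (Proposition~4.17) together with the explicit base for $\mathcal{U}_\tau$ provided by Proposition~4.20. By the definitions of the product uniformity on $\Pt(X)\times\Pt(X)\times[0,1]$ and of $\mathcal{U}_\tau$, it suffices to fix $p\in\P(\mathcal{U})$ and $\varepsilon>0$ and produce $\delta>0$ so that the conjunction of $p_\tau(\mu,\mu')<\delta$, $p_\tau(\eta,\eta')<\delta$, and $|t-t'|<\delta$ forces $p_\tau(t\mu+(1-t)\eta,\,t'\mu'+(1-t')\eta')<\varepsilon$. Let $D=\diam(X,p)+1$, which dominates $\diam(\Pt(X),p_\tau)$ by Proposition~4.9.

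I then decompose the distance via the triangle inequality through the auxiliary measure $t\mu'+(1-t)\eta'$, bounding the left side by $A+B$ with $A=p_\tau(t\mu+(1-t)\eta,\,t\mu'+(1-t)\eta')$ and $B=p_\tau(t\mu'+(1-t)\eta',\,t'\mu'+(1-t')\eta')$. The convexity of $p_\tau$ immediately gives
$A\le t\,p_\tau(\mu,\mu')+(1-t)\,p_\tau(\eta,\eta')\le p_\tau(\mu,\mu')+p_\tau(\eta,\eta')$.

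For $B$, by symmetry assume $t\le t'$. When $t<1$, rewrite
$$
t'\mu'+(1-t')\eta'=t\mu'+(1-t)\left(\frac{t'-t}{1-t}\mu'+\frac{1-t'}{1-t}\eta'\right),
$$
apply convexity once to peel off the common $t\mu'$ summand, and then apply it a second time after expanding $\eta'=\frac{t'-t}{1-t}\eta'+\frac{1-t'}{1-t}\eta'$; the two applications combine to $B\le(t'-t)\,p_\tau(\mu',\eta')\le|t-t'|\cdot D$, and in the degenerate case $t=1$ the constraint $t\le t'\le1$ forces $t'=1$ and $B=0$. Choosing $\delta\le\min\{\varepsilon/3,\,\varepsilon/(3D)\}$ then yields $A+B<\varepsilon$, which establishes the required uniform continuity of $\alpha$. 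The only subtle step is the convex-combination bookkeeping for $B$; beyond that no genuine obstacle arises.
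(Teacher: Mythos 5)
Your argument is correct and is exactly the route the paper intends: Proposition~4.29 is stated there as an immediate consequence of the convexity of $p_\tau$ (Proposition~4.17), and your write-up simply fills in the triangle-inequality decomposition and the base of ${\mathcal U}_\tau$ that the paper leaves implicit. The bound on your term $B$ also follows directly from the paper's remark that a convex pseudometric $d$ satisfies $d(tx+(1-t)y,y)\le t\,d(x,y)$, applied along the segment from $\eta'$ to $\mu'$.
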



\end{document}